\def\Xint#1{\mathchoice
{\XXint\displaystyle\textstyle{#1}}%
{\XXint\textstyle\scriptstyle{#1}}%
{\XXint\scriptstyle\scriptscriptstyle{#1}}%
{\XXint\scriptscriptstyle%
\scriptscriptstyle{#1}}%
\!\int}
\def\XXint#1#2#3{{\setbox0=\hbox{$#1{#2#3}{%
\int}$ }
\vcenter{\hbox{$#2#3$ }}\kern-.6\wd0}}
\def\barint{\, \Xint -} 
\def\bariint{\barint_{} \kern-.4em \barint}
\def\bariiint{\bariint_{} \kern-.4em \barint}
\renewcommand{\iint}{\int_{}\kern-.34em \int} 
\renewcommand{\iiint}{\iint_{}\kern-.34em \int} 
\DeclareMathAlphabet{\mathcal}{OMS}{cmsy}{m}{n}
\theoremstyle{plain}
\newtheorem{theorem}{Theorem}[section]
\newtheorem{definition}[theorem]{Definition}
\newtheorem{lemma}[theorem]{Lemma}
\newtheorem{proposition}[theorem]{Proposition}
\theoremstyle{definition}
\newtheorem{remark}[theorem]{Remark}
\newcommand{\R}{\mathbb{R}}
\newcommand{\N}{\mathbb{N}}
\newcommand{\bP}{\mathbb{P}}
\newcommand{\p}{\partial}
\newcommand{\les}{\lesssim}
\newcommand{\ges}{\gtrsim}
\newcommand{\norm}[1]{\lVert #1 \rVert}
\renewcommand{\:}{\colon}
\newcommand{\wto}{\rightharpoonup} 
\newcommand{\wstar}{\overset{\ast}{\rightharpoonup}}
\newcommand{\into}{\hookrightarrow}
\newcommand{\uloc}{\mathrm{uloc}}
\newcommand{\loc}{{\rm loc}}
\newcommand{\para}{{\rm par}}
\let\div\relax
\DeclareMathOperator{\div}{div}
\let\tilde\relas
\newcommand{\tilde}[1]{\widetilde{#1}}
\newcommand{\osc}{{\rm osc}}
\newcommand{\harm}{{\rm harm}}
\definecolor{darkgreen}{rgb}{0,0.5,0}
\definecolor{darkblue}{rgb}{0,0,0.7}
\definecolor{darkred}{rgb}{0.9,0.1,0.1}
\definecolor{lightblue}{rgb}{0,0.51,1}
\numberwithin{equation}{section}
\setlist[enumerate]{leftmargin=*}
\title[Localized smoothing and concentration in the half space]{Localized smoothing and concentration for the Navier-Stokes equations in the half space}
\author[D. Albritton]{Dallas Albritton}
\address[D. Albritton]{School of Mathematics, Institute for Advanced Study, 1 Einstein Dr., Princeton, NJ 08540, USA}
\email{dallas.albritton@ias.edu}
\author[T. Barker]{Tobias Barker}
\address[T. Barker]{Department of Mathematical Sciences, University of Bath, Bath BA2 7AY, UK}
\email{tobiasbarker5@gmail.com}
\author[C. Prange]{Christophe Prange}
\address[C. Prange]{Cergy Paris Universit\'e, Laboratoire de Math\'ematiques AGM, UMR CNRS 8088, France}
\email{christophe.prange@cyu.fr}
\date{\today}
\begin{document}
\begin{abstract}
We establish a local-in-space short-time smoothing effect for the Navier-Stokes equations in the half space. The whole space analogue, due to Jia and {\v S}ver{\'a}k~\cite{jiasverakselfsim}, is a central tool in two of the authors' recent work on quantitative $L^3_x$ blow-up criteria~\cite{barker2020quantitative}. The main difficulty is that the non-local effects of the pressure in the half space are much stronger than in the whole space. As an application, we demonstrate that the critical $L^3_x$ norm must concentrate at scales $\sim \sqrt{T^* - t}$ in the presence of a Type~I blow-up. 
\end{abstract}

\maketitle

\tableofcontents

\parskip   2pt plus 0.5pt minus 0.5pt

\section{Introduction}

This paper is devoted to the study of local smoothing properties for the Navier-Stokes equations 
\begin{equation}\label{e.nse}
\tag{NS}
\left\lbrace
\begin{aligned}
\partial_t u-\Delta u+u\cdot\nabla u+\nabla p=0 \\
\div u = 0
\end{aligned}
\right.
\end{equation}
in the half space $\R^3_+$ with no-slip conditions on the boundary $\partial\R^3_+$.

Our results are motivated by the following general question: \emph{What initial data produce smooth solutions?} More specifically, we ask,
\begin{quote}
\emph{Is it possible to quantify smoothing effects in terms of local properties of the initial data?}
\end{quote}
This question has been subject to intense research in recent years for the three-dimensional Navier-Stokes equations in the whole space, in the wake of the seminal work~\cite{jiasverakselfsim}. In~\cite[Theorem 3.1]{jiasverakselfsim}, Jia and \v Sver\'ak established a \emph{local-in-space short-time smoothing effect}
 in the context of local energy solutions to~\eqref{e.nse} with data of locally uniformly bounded energy that in addition belongs locally to a subcritical space $L^m(B_1)$, $m>3$. That result was generalized by two groups, roughly at the same time and independently, to critical spaces: on the one hand by Kang, Miura, and Tsai~\cite{KMT21} for data locally in $L^3(B_1)$ by a compactness argument akin to~\cite{jiasverakselfsim}, and on the other hand by Barker and Prange~\cite{barker2018localized} for data locally in $L^3$, $L^{3,\infty}$, and $B^{-1+3/p}_{p,\infty}(B_1)$, $p\in(3,\infty)$, by a Caffarelli--Kohn--Nirenberg-type iteration~\cite{ckn}. In either case, the critical norm is assumed to be small in $B_1$. We also refer to the works~\cite{bradshaw2019global,KMT-arxiv20,KMT-arxiv21} along the same line of research, where further refinements are obtained.

Local-in-space short-time smoothing is a very useful and versatile tool for the study of the Navier-Stokes equations in the whole space. It was originally introduced in~\cite{jiasverakselfsim} to quantify the spatial asymptotics of the profile of (forward) self-similar solutions; this was key to their existence result for large-data self-similar solutions.\footnote{Later, existence was also established by more elementary methods, see~\cite{bradshawtsaiII}, based on a Calder{\'o}n-type splitting, and subsequent works. Forward self-similar solutions are intimately connected to the  non-uniqueness of Leray-Hopf solutions~\cite{jiasverakillposed,guillodsverak,albritton2021nonuniqueness}. The works~\cite{tsaidiscretely,KMT21} further apply localized smoothing to the spatial asymptotics of $\lambda$-discretely self-similar solutions with scaling factor $\lambda \approx 1$.} More recently, it was exploited to prove norm concentration results near potential singularities in the spirit of~\cite{barker2018localized}. Third, it is a key tool for the backward propagation of certain scale-invariant quantities in the strategy of~\cite{barker2020quantitative}; see also the related work~\cite{Tao19}, where quantitative estimates are obtained by backward propagation of Fourier-based scale-invariant quantities, and~\cite{Stan1,Stan2}. This allowed the second and third authors of the present paper to develop a quantitative version of Seregin's $L^3$ blow-up criterion~\cite{sereginl3}. This is one motivation for the present work.

One heuristic interpretation of local-in-space short-time smoothing is the following: \emph{The nonlocal effects of the pressure do not substantially hinder the parabolic smoothing properties of the Navier-Stokes equations, at least locally in space and time} (see \textbf{(S)} in~\cite[p. 234]{jiasverakselfsim}). It is not clear at first sight that such a property holds, even in $\R^3$. It is all the more difficult to prove such local smoothing properties in the half space, for a number of fundamental reasons, related to the fact that the nonlocal effects of the pressure are much stronger in $\R^3_+$ than in $\R^3$:

\begin{enumerate}[leftmargin=*]
\item The no-slip boundary condition for the velocity is responsible for strong velocity gradients near the boundary and vorticity creation. The vorticity itself does not satisfy a `standard' boundary condition, though a nonlinear and nonlocal boundary condition was derived in \cite{Mae13}. As a consequence, as of now there is still no proof of a Constantin and Fefferman-type geometric nonlinearity depletion criterion~\cite{CF93} outside the critical setting. In the critical case, i.e., under a Type I assumption, such a result was obtained in \cite{gigahsumaekawaplaner} thanks to the proof of a complicated Liouville theorem, and in \cite{barker2019scale} via a new strategy based on the stability of Type I singularities. The relationship between boundary effects and potential singularity formation is discussed in~\cite{lariostiti}.  

\item Contrary to the whole space, there are obstructions to spatial smoothing in the half space for suitable solutions. This was demonstrated by an example of Kang~\cite{KangUnbounded2005} and of Seregin and {\v S}ver{\'a}k~\cite{sereginsverakshear}: in the local setting, there are bounded flows with unbounded derivatives. Based on Kang's example, Kang, Lai, Lai and Tsai \cite{KLLT20,KLLT21} were able to construct a globally finite-energy solution to Navier-Stokes with a localized flux on the boundary and unbounded gradient everywhere on the boundary away from the flux.

\item In the half space the pressure can be decomposed into a Helmholtz part and a harmonic part. This decomposition is the key to the resolvent pressure formulas obtained in the paper \cite{MMPEnergy} inspired by the work \cite{DeschHieberPruss}. Koch and Solonnikov \cite{KS02} showed that there are examples of Stokes systems in the half space with divergence-form source term $\nabla\cdot F$ for which the harmonic pressure is not integrable in time. Hence in short, $p\not\simeq u\otimes u$ for the half space, contrary to the whole space. This issue brought about considerable difficulties in the work \cite{barker2019scale} when clarifying the relationship between different notions of Type I singularities in the half space; see also the discussion on page~\pageref{thm:localsmoothinglocal} above Theorem~\ref{thm:localsmoothinglocal}. A way to circumvent the difficulty was to rely on `fractional pressure estimates' pioneered in the work~\cite{CK18} and reproved in \cite{barker2019scale} via the formulas of~\cite{MMPEnergy}.

\item The pressure associated to the Stokes resolvent problem in a bounded domain $\Omega$ with no-slip boundary condition and source term $f$ satisfies the following bound:
\begin{equation}\label{e.pressres}
\|p\|_{L^2(\Omega)}\lesssim_\alpha |\lambda|^{-\alpha}\|f\|_{L^2(\Omega)},\qquad\mbox{for}\quad \alpha\in[0,1/4)
\end{equation}
as showed in \cite{NS03,TW20}. In \cite{Tolk20} the optimality of the threshold $1/4$ is established. Notice that the power $\alpha$ in estimate \eqref{e.pressres} breaks the natural scaling of the equations in the whole space, where $\alpha=1/2$. Let us emphasize though that in a bounded domain there is no such scale-invariance. The bound \eqref{e.pressres} turns, via Dunford's formula, into a short-time estimate for the pressure associated to the unsteady Stokes problem with a singularity $O(t^{-3/4-\delta})$ for $\delta>0$. This singularity is consistent with the short-time estimates obtained in \cite[Proposition 2.1]{MMPEnergy}; see Lemma~\ref{lem:localenergyestslemma} below. 
\end{enumerate}

In spite of these difficulties, the Navier-Stokes equations in the half space prove to have sufficiently good localization properties to be able to establish the following results. As in~\cite{jiasverakselfsim}, we prove local smoothing at the level of \emph{local energy solutions}. In the half space, these solutions were developed in~\cite{MMPEnergy}, see specifically Definition 1.1 in~\cite{MMPEnergy}.
Recall
\begin{equation}
	\norm{u_0}_{L^2_{\uloc}(\R^3_+)}^2 := \sup_{x_0 \in \overline{\R^3_+}} \int_{B(x_0) \cap \R^3_+} |u_0|^2 \, dx \, .
\end{equation}
We also assume the decay condition
\begin{equation}
\label{eq:decaycond}
	\lim_{\stackrel{|x_0| \to +\infty}{x_0 \in \overline{\R^3_+}}} \int_{B(x_0) \cap \R^3_+} |u_0|^2 \, dx = 0 \, .
\end{equation}

Let $x_0 \in \overline{\R^3_+}$ and $T > 0$. Define $\Omega_R(x_0) := B_R(x_0) \cap \R^3_+$.\footnote{For further notations, we refer to the last subsection of the `Introduction'.} 

\begin{theorem}[Localized smoothing, global setting]
\label{thm:localsmoothinghalfspace}
Let $u$ be a local energy solution on $\R^3_+ \times (0,T)$ with initial data $u_0$ satisfying $\norm{u_0}_{L^2_{\uloc}(\R^3_+)} \leq M$ and the decay condition~\eqref{eq:decaycond}.

Let $\norm{u_0}_{L^m(\Omega_3(x_0))} \leq N$ with $m \in [3,+\infty)$. If $m=3$, we further require the smallness condition $N \leq N_0 \ll 1$.

Then there exists $S = S(M,N,m) \in (0,1]$ satisfying the following property: For $\bar{S} = \min(S,T)$, we have
\begin{equation}
	\label{eq:linfitysmoothingu}
	\sup_{t \in (0,\bar{S})} t^{\frac{3}{2m}} \norm{u(\cdot,t)}_{L^\infty(\Omega_1(x_0))} \les_m N + N^{C_m}
\end{equation}
for a constant $C_m \geq 1$. 
\end{theorem}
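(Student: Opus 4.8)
The plan is to mimic the Jia--\v Sver\'ak strategy from~\cite{jiasverakselfsim}, but localized so that the pressure's nonlocal contributions from the far field and from the boundary are controlled at a quantitative level. I would first split the initial data near $x_0$ as $u_0 = v_0 + w_0$, where $v_0$ is (essentially) the restriction of $u_0$ to a slightly larger half-ball $\Omega_2(x_0)$, suitably solenoidalized and cut off so that it lies in $L^m \cap L^2$ with a quantitative bound in terms of $N$ (and $\|v_0\|_{L^2} \lesssim M$, say), and $w_0 = u_0 - v_0$ which vanishes on $\Omega_{3/2}(x_0)$. For $v_0$, the subcritical ($m>3$) local-in-time existence theory for the Navier--Stokes equations in the half space with no-slip data produces a strong solution $v$ on a time interval $(0,S_1)$ with $S_1 = S_1(N,m)$ and the parabolic smoothing estimate $t^{3/(2m)}\|v(\cdot,t)\|_{L^\infty} \lesssim_m N$; in the critical case $m=3$ one instead invokes small-data global existence in $L^3$ (or $L^{3,\infty}$), which gives $S_1 = 1$ and the same bound once $N \le N_0$. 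The difference $\tilde u := u - v$ then satisfies a perturbed Navier--Stokes system driven by the cross terms $v\cdot\nabla\tilde u + \tilde u\cdot\nabla v$, with data $w_0$ that is zero near $x_0$, and with an associated pressure.

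Next I would run a local energy / $\e$-regularity argument for $\tilde u$ in a parabolic cylinder around $(x_0,0)$. Because $w_0$ vanishes on $\Omega_{3/2}(x_0)$, the only way $\tilde u$ can fail to be small there initially is through the pressure, so the heart of the matter is to show that the pressure of $\tilde u$ — and, more precisely, its nonlocal (harmonic) part — contributes only a controllably small amount of energy in $\Omega_1(x_0)\times(0,\bar S)$ provided $\bar S$ is small. Here I would lean directly on the half-space pressure machinery already cited in the excerpt: the Helmholtz--harmonic decomposition of the pressure, the resolvent/Dunford short-time pressure estimates of~\cite{MMPEnergy} recalled in Lemma~\ref{lem:localenergyestslemma} (with the $O(t^{-3/4-\delta})$ singularity, which is integrable in time since $3/4 < 1$), and the fractional pressure estimates of~\cite{CK18, barker2019scale}. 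The scheme is: bound the local energy of $\tilde u$ on $\Omega_2(x_0)\times(0,\bar S)$ by (i) the data term, which is $O(M)$ but supported away from $\Omega_{3/2}$, hence contributes only via tails, (ii) the cross terms, which are controlled using the already-established smoothing bound on $v$ and absorbed for $\bar S$ small, and (iii) the pressure terms, split into a near/local piece handled as in the whole-space case and a far/harmonic piece estimated via the fractional pressure bounds and a Caccioppoli/interpolation argument. Choosing $\bar S$ small (depending on $M,N,m$) makes the total local energy and pressure contributions on a smaller cylinder small enough to trigger the Caffarelli--Kohn--Nirenberg $\e$-regularity criterion, yielding $\sup_{t<\bar S'} t^{1/2}\|\nabla \tilde u\|_{L^\infty(\Omega_{1}(x_0))} \lesssim 1$ and, by the same token, $\sup_{t<\bar S'}\|\tilde u(\cdot,t)\|_{L^\infty(\Omega_1(x_0))}\lesssim 1$.

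Finally I would combine the two pieces: on $\Omega_1(x_0)\times(0,\bar S)$ we have $u = v + \tilde u$ with $t^{3/(2m)}\|v(\cdot,t)\|_{L^\infty}\lesssim_m N$ and $\|\tilde u(\cdot,t)\|_{L^\infty}\lesssim_m 1 + N^{C_m}$ (the $N^{C_m}$ accounting for the polynomial dependence of the absorbed cross terms and pressure terms on the size of $v$), so after possibly shrinking $S$ once more and multiplying by $t^{3/(2m)}\le 1$ we obtain $\sup_{t\in(0,\bar S)} t^{3/(2m)}\|u(\cdot,t)\|_{L^\infty(\Omega_1(x_0))} \lesssim_m N + N^{C_m}$, which is exactly~\eqref{eq:linfitysmoothingu}. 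One bookkeeping point is that the time of existence $S$ must be chosen depending on $M$ and $N$ simultaneously (the $v$-part wants $S_1(N,m)$, the $\tilde u$-part wants $\bar S(M,N,m)$), so we set $S = \min(S_1,\bar S)$, consistent with the statement $S=S(M,N,m)$.

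The main obstacle, as flagged in points (3) and (4) of the introduction, is step two: controlling the nonlocal/harmonic part of the pressure of $\tilde u$ on the small cylinder. In the whole space one has $p \simeq u\otimes u$ with full scaling, but in the half space the harmonic pressure need not even be time-integrable (Koch--Solonnikov~\cite{KS02}) and the best resolvent bound~\eqref{e.pressres} carries the sub-scaling exponent $\alpha < 1/4$. I expect the delicate part to be extracting, from the fractional pressure estimates and the $O(t^{-3/4-\delta})$ short-time bound, a quantity that is genuinely small as $\bar S \to 0$ while still strong enough to feed the CKN criterion — likely requiring an interpolation between the fractional-in-space pressure control and the time-integrability, together with a careful choice of the cutoff scales (the $1 < 3/2 < 2 < 3$ radii) so that boundary flux terms in the local energy inequality land on the region where $w_0\equiv 0$ and where $v$ is already regular. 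Everything else is either standard parabolic theory ($v$-part) or a localized rerun of the known whole-space arguments ($\tilde u$-part away from the pressure).
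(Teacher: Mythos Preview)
Your decomposition $u = v + \tilde u$ (strong solution from localized data plus remainder) matches the paper's $u = a + v$ exactly, with the roles of the letters swapped. The overall Jia--\v Sver\'ak architecture is correct, and you correctly identify the pressure as the obstacle.

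Where you diverge from the paper is in how you propose to close the pressure control on the remainder. You plan to split the remainder's pressure into a local piece and a harmonic/nonlocal piece, then use fractional pressure estimates and interpolation to feed the \emph{standard} CKN criterion. The paper takes a cleaner route: it observes that both $u$ and the strong solution $a$ are local energy solutions in the sense of~\cite{MMPEnergy}, so both of their pressures already satisfy the $L^{4/3,\infty}_t L^2_x$ bound of Lemma~\ref{lem:localenergyestslemma}. By the triangle inequality, the remainder's pressure $q = p - z$ inherits this bound directly---no near/far decomposition is performed at this stage. The price is that $L^{4/3-}_t L^2_x$ is too weak in time for the usual $L^{3/2}_{t,x}$-based CKN, so the paper's main technical contribution is a \emph{new} $\varepsilon$-regularity criterion for the perturbed system~\eqref{eq:perturbednse} (Propositions~\ref{pro:epsilonreg} and~\ref{pro:epsilonregcritical}) tailored to pressure in $L^{1+\delta_t}_t L^{2-\delta_x}_x$ and velocity in the H\"older-conjugate space $L^{\xi_t}_t L^{\xi_x}_x$; this is proved by Lin's compactness method, and the pressure decomposition you describe is carried out \emph{inside} that proof, at the level of the normalized sequences. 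You are groping toward this in your last paragraph, but you frame it as ``interpolate to feed standard CKN'' rather than ``redesign CKN to accept what the half-space pressure estimates actually deliver.''

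Two further gaps. First, in the critical case $m=3$, the second compactness step in the $\varepsilon$-regularity fails to yield H\"older continuity of the remainder (the limiting Stokes system acquires a forcing $\div F$ with $F$ merely in $L^5$), so the paper stops at a subcritical Morrey bound on the remainder and then applies the \emph{unperturbed} $\varepsilon$-regularity to $u$ itself on small cylinders of radius $\sim\sqrt{t'}$; your proposal treats $m=3$ and $m>3$ uniformly. Second, for large $N$ when $m>3$, the paper reduces to the small-data case by rescaling with factor $\lambda = (N_0/N)^{1/(1-3/m)}$ and covering $B^+_{2/\lambda}$ by unit balls (Step~3 of the proof); this, not ``polynomial dependence of absorbed cross terms,'' is where the $N^{C_m}$ in~\eqref{eq:linfitysmoothingu} originates.
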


\begin{remark}[Subcritical refinements]
	\label{rmk:subcriticalrefinements}
\textit{Under the hypotheses of Theorem~\ref{thm:localsmoothinghalfspace}, if additionally $m > 3$, then the following hold:
\begin{itemize}[leftmargin=*]
	\item For all $p \in [m,+\infty]$, we have
\begin{equation}
	\label{eq:refinedlpest}
	\sup_{t \in (0,\bar{S})} t^{\frac{3}{2}\left( \frac{1}{m} - \frac{1}{p} \right)} \norm{u(\cdot,t)}_{L^p(\Omega_1(x_0))} \les_m N + N^{C_m} \, .
\end{equation}
 	\item When $M, N \geq C_{\rm univ} > 0$, we have
\begin{equation}
S = O(1) M^{-O(1)} N^{-O(1)} \, .
\end{equation}
\end{itemize}
}
\end{remark}

The property~\eqref{eq:refinedlpest} is a consequence of the decomposition~\eqref{eq:muhdecomposition} of $u$ into a strong solution $a$ with $L^m$ initial data and a H{\"o}lder-continuous remainder $v$.

\smallskip

Our main application is the following concentration result, analogous to~\cite[Theorem 2]{barker2018localized}.

\begin{theorem}[Global concentration]
	\label{thm:concentration}
	Let $T^* > 0$. Let $u$ be a local energy solution on $\R^3_+ \times (0,T^*)$. Suppose that $u$ is locally bounded on $\overline{\R^3_+} \times [0,T^*)$ and that $(x^*,T^*)$ is a singular point of $u$, where $x^* \in \overline{\R^3_+}$.

	Moreover, suppose that the singularity is Type~I in the following global sense: for some $r_0 \in (0,\sqrt{T^*}]$,
	\begin{equation}
		\label{eq:muhglobaltypeibound}
		\sup_{x \in \overline{\R^3_+}} \sup_{0  < r \leq r_0} r^{-\frac{1}{2}} \| u \|_{L^\infty_t L^2_x(\Omega_r(x) \times (T^*-r^2,T^*))} \leq M.
	\end{equation}

	 Then there exists $\bar{t} = \bar{t}(T^*,M,r_0) \in [T^* - r_0^2,T^*)$ such that for all $t \in [\bar{t},T^*)$, we have the following concentration of the critical $L^3$ norm:
	\begin{equation}
		\label{eq:muhconcentrationineq}
		\| u(\cdot,t) \|_{L^3(\Omega_{R(t)}(x^*))} > N_0,
	\end{equation}
	where
	\begin{equation}
	R(t) := 3 \times \sqrt{\frac{T^*-t}{S(M)}},
	\end{equation}
	and $S(M) = S(M,N_0,3)$ and $N_0$ are as in Theorem~\ref{thm:localsmoothinghalfspace}.
\end{theorem}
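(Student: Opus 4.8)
The plan is to argue by contradiction, exactly as in the whole-space concentration result of Barker--Prange~\cite{barker2018localized}, using Theorem~\ref{thm:localsmoothinghalfspace} as a black box. Suppose that for some sequence $t_k \uparrow T^*$ we have $\| u(\cdot,t_k) \|_{L^3(\Omega_{R(t_k)}(x^*))} \leq N_0$. The key point is that the Type~I bound~\eqref{eq:muhglobaltypeibound} is scale-invariant, so after rescaling around the (space-time) point $(x^*,T^*)$ at scale $\sqrt{T^*-t_k}$ we obtain, for each $k$, a local energy solution on a large time interval whose rescaled initial data at the rescaled time corresponding to $t_k$ has $L^2_{\uloc}$ norm controlled by $M$ (from~\eqref{eq:muhglobaltypeibound} with $r = r_0$ and the Type~I scaling) and whose local $L^3(\Omega_3)$ norm around the rescaled singular point is $\leq N_0$. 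The decay condition~\eqref{eq:decaycond} is propagated under this rescaling, or can be arranged since we only need the solution near $(x^*,T^*)$; alternatively one truncates and solves locally.

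The second step is to feed this rescaled datum into Theorem~\ref{thm:localsmoothinghalfspace} with $m=3$, $N = N_0$, $N_0 \ll 1$. Since $N = N_0$ is the smallness threshold, we get $S = S(M,N_0,3) > 0$ and the bound~\eqref{eq:linfitysmoothingu}, namely $\sup_{t \in (0,S)} t^{1/2} \| u_k(\cdot,t) \|_{L^\infty(\Omega_1)} \les N_0 + N_0^{C_3} \les N_0$ for the rescaled solution $u_k$. Crucially, the scale $R(t_k) = 3\sqrt{(T^*-t_k)/S(M)}$ is chosen precisely so that, after undoing the rescaling, the smoothing window $(0,S)$ for $u_k$ corresponds to the physical time interval $(t_k, T^*)$, and in particular the smoothing estimate holds at the rescaled time $1$, i.e.\ the physical time $t_k + S \cdot (T^*-t_k)/S = T^*$. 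This gives a bound on $\| u(\cdot,t) \|_{L^\infty}$ on a full parabolic neighborhood of $(x^*,T^*)$ of the right size, which, combined with the local energy bound and a standard $\varepsilon$-regularity criterion up to the boundary (Caffarelli--Kohn--Nirenberg-type, available in the half space), forces $(x^*,T^*)$ to be a regular point --- contradicting the hypothesis that it is singular. One must also check the (easier) uniqueness/propagation statement: that the rescaled solution coincides with the rescaled $u$ on the relevant region, which follows from weak-strong uniqueness for local energy solutions in the half space since $u$ is assumed locally bounded.

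To produce the threshold time $\bar{t}$ and the \emph{quantitative} choice $R(t)$ with $\bar t \in [T^*-r_0^2,T^*)$ rather than just a subsequential statement, I would run the argument directly (not along a subsequence): fix any $t$ with $T^*-t$ small enough that $R(t) \leq r_0 \wedge \sqrt{T^*-t}$ (this is where $\bar t$ enters --- it is the largest time for which the rescaled problem fits inside the region where~\eqref{eq:muhglobaltypeibound} applies and where $u$ is still known to be locally bounded, so $T^*-\bar t \sim r_0^2 \wedge S(M) r_0^2$), assume toward a contradiction that~\eqref{eq:muhconcentrationineq} fails at that single $t$, rescale, apply Theorem~\ref{thm:localsmoothinghalfspace}, and derive regularity at $(x^*,T^*)$. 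Since this works for every such $t$, the claimed concentration holds for all $t \in [\bar t, T^*)$.

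The main obstacle I anticipate is bookkeeping the rescaling near the boundary and verifying that the hypotheses of Theorem~\ref{thm:localsmoothinghalfspace} are genuinely met by the rescaled solution: one must confirm that rescaling a local energy solution of~\eqref{e.nse} in $\R^3_+$ about a boundary or near-boundary point again yields a local energy solution in the sense of~\cite{MMPEnergy} (this is a scaling-covariance statement for the class), that the decay condition~\eqref{eq:decaycond} can be secured, and that the $L^\infty$ smoothing bound genuinely upgrades to full regularity at the singular point --- the last step requiring an $\varepsilon$-regularity theorem valid up to $\partial\R^3_+$, together with control of the pressure there. Given the strong nonlocal pressure effects in the half space emphasized in the introduction, keeping track of the pressure in the $\varepsilon$-regularity step (rather than in the smoothing step, which is already handled by Theorem~\ref{thm:localsmoothinghalfspace}) is where care is needed; however, since $u$ is assumed locally bounded near $(x^*,T^*)$, the relevant local pressure estimates should be standard.
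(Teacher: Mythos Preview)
Your approach is correct and matches the paper's: argue by contraposition at a fixed $t \in [\bar t, T^*)$, rescale so that $(t,T^*)$ becomes $(0,S)$, and apply Theorem~\ref{thm:localsmoothinghalfspace} (the paper takes $\bar t = T^* - S(M)\, r_0^2$, which makes the scaling factor $\lambda = \sqrt{(T^*-t)/S} \leq r_0$ so that~\eqref{eq:muhglobaltypeibound} directly controls $\|\tilde u(\cdot,0)\|_{L^2_{\uloc}}$). Two of the obstacles you anticipate are not actually present, and including them overcomplicates the argument: Theorem~\ref{thm:localsmoothinghalfspace} is a statement about the \emph{given} local energy solution, not about a separately constructed strong solution, so no weak-strong uniqueness step is needed; and the $L^\infty$ bound~\eqref{eq:linfitysmoothingu}, once unscaled, already says that $u$ is bounded in a parabolic neighborhood of $(x^*,T^*)$, which is the definition of regularity---no further $\varepsilon$-regularity or boundary pressure control is required at this final stage.
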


The proof is an immediate consequence of Theorem~\ref{thm:localsmoothinghalfspace} and a rescaling procedure, so we summarize it here:

\begin{proof}[Proof of Theorem~\ref{thm:concentration}]
Define $\bar{t} = T^* - S(M) r_0^2$. As in~\cite[Section 4.2]{barker2018localized}, we prove the contrapositive. Suppose that, for some $t \in [\bar{t},T^*)$, the concentration inequality~\eqref{eq:muhconcentrationineq} is violated. We time-translate and rescale (under the Navier-Stokes scaling symmetry) the solution $u$ so that the time interval $(t,T^*)$ becomes $(0,S)$ in the new variables. Let $\tilde{x}^*$ be the image of $x^*$ under this transformation. Thanks to the global Type~I bound~\eqref{eq:muhglobaltypeibound} and the definition of $\bar{t}$, the new solution $\tilde{u}$ satisfies $\| \tilde{u}(\cdot,0) \|_{L^2_{\rm uloc}(\R^3_+)} \leq M$. Moreover, by the violation of~\eqref{eq:muhconcentrationineq} and the definition of $R(t)$, we have the smallness of the critical $L^3$ norm in $\Omega_3(\tilde{x}^*)$: $\| \tilde{u}(\cdot,0) \|_{L^3(\Omega_3(\tilde{x}^*))} \leq N_0$. Hence, $\tilde{u}$ satisfies the assumptions of Theorem~\ref{thm:localsmoothinghalfspace}. We conclude that  $\tilde{u}$ is bounded in a parabolic neighborhood of $(\tilde{x}^*,S)$, so $u$ is bounded in a parabolic neighborhood of $(x^*,T^*)$. \end{proof}

The most classical notion of Type~I, corresponding to the ODE blow-up rate in semilinear heat equations, is in terms of the $\sup$-norm: $\| u(\cdot,t) \|_{L^\infty(\R^3_+)} \sim (T^* - t)^{-1/2}$. On the other hand, this terminology occasionally refers to boundedness of \emph{some} scaling-invariant quantity at the blow-up time, for example, $\| u(\cdot,t) \|_{L^{3,\infty}(\R^3_+)} \les 1$. The assumption~\eqref{eq:muhglobaltypeibound} is a weak form of Type~I, adapted to the scaling-invariant energy. For Leray-Hopf solutions,~\eqref{eq:muhglobaltypeibound} is \emph{implied} by the $L^\infty$ and $L^{3,\infty}$ conditions, though this is not trivial, see~\cite[Theorem 2]{barker2019scale} for the precise statement with boundary and~\cite{albrittonbarkerlocalregI} for a discussion without boundary.

\smallskip

It is also possible to prove \emph{local} versions of the above theorems wherein the global background assumption $\| u_0 \|_{L^2_{\rm uloc}(\R^3_+)} \leq M$ is replaced by a local assumption on the solution $u$ itself.

\begin{theorem}[Localized smoothing, local setting]
\label{thm:localsmoothinglocal}
Let $(u,p)$ be a Navier-Stokes solution on $\Omega_3(x_0) \times (0,T)$ satisfying
\begin{equation}
	\| u \|_{L^\infty_t L^2_x(\Omega_3(x_0) \times (0,T))} + \| \nabla u \|_{L^2_{t,x}(\Omega_3(x_0) \times (0,T))} + \| p \|_{L^{\zeta_t}_t L^{\zeta_x}_x(\Omega_3(x_0) \times (0,T))} \leq M \, ,
\end{equation}
where $(\zeta_x,\zeta_t)$ is a fixed pair of exponents specified in Section~2.\footnote{This ensures, among other things, that $up \in L^1(\Omega_3(x_0) \times (0,T))$.}

Suppose that $u(\cdot,t)$ vanishes on $\p \Omega_3(x_0) \cap \p \R^3_+$ for all $t \in (0,T)$ and that $(u,p)$ satisfies the local energy inequality.

Let $\norm{u_0}_{L^m(\Omega_3(x_0))} \leq N$ with $m \in [3,+\infty)$. If $m=3$, we further require the smallness condition $N \leq N_0 \ll 1$. Suppose that $u(\cdot,t) \overset{t \to 0^+}{\longrightarrow} u_0$ in $L^2(\Omega_3(x_0))$. 

Then there exists $S = S(M,N,n) \in (0,1]$ satisfying the following property: for $\bar{S} = \min(S,T)$, we have
\begin{equation}
	\sup_{t \in (0,\bar{S})} t^{\frac{3}{2m}} \norm{u(\cdot,t)}_{L^\infty(\Omega_1(x_0))} \les_m N + N^{C_m}
\end{equation}
for a constant $C_m \geq 1$.
\end{theorem}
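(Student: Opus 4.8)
\textbf{Proof proposal for Theorem~\ref{thm:localsmoothinglocal}.}

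\medskip

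The plan is to reduce the local statement to the global one, Theorem~\ref{thm:localsmoothinghalfspace}, by a localization–and–extension argument. The essential point is that, although in Theorem~\ref{thm:localsmoothinghalfspace} we assume a global background bound $\norm{u_0}_{L^2_\uloc(\R^3_+)} \leq M$ on the \emph{initial data} of a genuine local energy solution, here we are handed only a solution on the bounded cylinder $\Omega_3(x_0) \times (0,T)$ with finite energy and a pressure in $L^{\zeta_t}_t L^{\zeta_x}_x$, together with the no-slip condition on the flat part of the boundary and the local energy inequality. First I would fix a cutoff $\phi \in C_c^\infty(B_3(x_0))$ with $\phi \equiv 1$ on $B_2(x_0)$ and consider $w := \phi u$. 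Using the equation for $u$ on $\Omega_3(x_0)$, one sees that $w$ solves a perturbed Stokes/Navier–Stokes system on $\R^3_+$ (extending by zero outside $\Omega_3(x_0)$ is legitimate because $w$ is compactly supported in $x$ inside $B_3(x_0)$ and vanishes on $\p\R^3_+$) with forcing terms that are linear combinations of $u$, $\nabla u$, $p$, $u\otimes u$ multiplied by derivatives of $\phi$; all such terms are supported in the annular region $\{2 \leq |x - x_0| \leq 3\} \cap \R^3_+$, where by hypothesis $u$, $\nabla u \in L^\infty_t L^2_x \cap L^2_{t,x}$ and $p \in L^{\zeta_t}_t L^{\zeta_x}_x$. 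The key structural fact — already exploited in the proof of Theorem~\ref{thm:localsmoothinghalfspace}, see the pressure decomposition and the fractional pressure estimates recalled in items (3)–(4) of the Introduction and in Lemma~\ref{lem:localenergyestslemma} — is that these localized source terms are regular enough, and located \emph{away} from $(x_0, \cdot)$, that their contribution to the solution near $x_0$ is controlled: the nonlocal pressure response they generate is, at worst, of the mild singularity type $O(t^{-3/4-\delta})$ and is harmless once integrated.

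\medskip

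The second step is to absorb the finite-energy, finite-pressure data into the hypotheses of the global theorem. Since $u(\cdot,t) \to u_0$ in $L^2(\Omega_3(x_0))$ and $\norm{u}_{L^\infty_t L^2_x(\Omega_3(x_0)\times(0,T))} \leq M$, the cutoff datum $w_0 = \phi u_0$ satisfies $\norm{w_0}_{L^2(\R^3_+)} \leq M$ and hence trivially $\norm{w_0}_{L^2_\uloc(\R^3_+)} \leq M$ together with the decay condition~\eqref{eq:decaycond} (it is compactly supported). Moreover $\norm{w_0}_{L^m(\Omega_3(x_0))} = \norm{\phi u_0}_{L^m} \les N$, with the smallness preserved when $m=3$ because $\phi$ is bounded by $1$ and we may, if needed, shrink the cutoff to make $\norm{w_0}_{L^3(\Omega_3(x_0))} \leq N_0$ exactly; the $L^\infty$-norm we seek to estimate is over $\Omega_1(x_0)$, where $\phi\equiv1$, so this does not cost anything. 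Thus we are in a position to run the \emph{proof} of Theorem~\ref{thm:localsmoothinghalfspace} — not merely invoke its statement — with the understanding that $w$ satisfies a \emph{forced} version of the local energy solution framework of~\cite{MMPEnergy}, the forcing being the commutator terms described above. Concretely, one decomposes $w = a + v$ where $a$ solves the Stokes (or mild Navier–Stokes) problem in $\R^3_+$ with $L^m$ data $w_0$ and the commutator forcing, and $v$ is a local-energy remainder; the subcritical ($m>3$) Hölder estimate on $v$ and the $L^\infty$ smoothing on $a$ then give~\eqref{eq:linfitysmoothingu}, \emph{provided} the commutator forcing is handled by the same machinery (fractional pressure estimates, local energy estimates of Lemma~\ref{lem:localenergyestslemma}).

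\medskip

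\textbf{Main obstacle.} The hard part will be controlling the \emph{pressure} generated by the cutoff. In the whole space one can take the pressure of $w$ to be essentially $(-\Delta)^{-1}\partial_i\partial_j(\phi^2 u_iu_j)$ plus lower-order commutators, and such corrections are innocuous; but in $\R^3_+$, as emphasized in items (3)–(4) of the Introduction — the Koch–Solonnikov example and the sharp threshold $\alpha < 1/4$ in~\eqref{e.pressres} — the harmonic part of the pressure associated to a divergence-form source need not even be integrable in time, and localizing $u$ produces precisely such a divergence-form source $\nabla\cdot(\text{stuff}\otimes\nabla\phi)$. The resolution, following~\cite{barker2019scale, CK18, MMPEnergy}, is to never work with the full pressure of $w$ but only with its local behavior near $x_0$: the commutator source is supported in the annulus $\{2 \le |x-x_0|\le 3\}$, at fixed positive distance from $\Omega_1(x_0)$, so its (harmonic) pressure contribution near $x_0$, although possibly badly behaved \emph{globally} in time, is \emph{smooth} in space near $x_0$ and satisfies a short-time bound with the mild $t^{-3/4-\delta}$ singularity, which is integrable. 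One must check that this local-in-space / short-time pressure bound is exactly what the proof of Theorem~\ref{thm:localsmoothinghalfspace} consumes, and that the hypothesis $p \in L^{\zeta_t}_t L^{\zeta_x}_x(\Omega_3(x_0)\times(0,T))$ — with the specific pair $(\zeta_x,\zeta_t)$ fixed in Section~2 precisely so that $up \in L^1$ — suffices to close the local energy inequality for $v$ after the cutoff. Once this bookkeeping is done, the remaining steps (the Stokes estimates for $a$, the Hölder/$\epsilon$-regularity iteration for $v$, and the assembly of~\eqref{eq:linfitysmoothingu}) are identical to those in the global case and require no new ideas.
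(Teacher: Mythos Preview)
Your approach takes a substantially harder route than the paper's, and it has gaps that would require real work to close.

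The paper's proof is almost a one-liner: rerun the proof of Theorem~\ref{thm:localsmoothinghalfspace} verbatim, replacing the single place where the global local-energy-solution hypothesis was used. Recall that in that proof one constructs a divergence-free localization $a_0$ of $u_0$ supported in $\overline{\Omega_3(x_0)}$ and lets $a$ be the \emph{global} strong $L^m$ solution on $\R^3_+$ with data $a_0$; this step depends only on $u_0|_{\Omega_3(x_0)}$ and is unchanged. One then sets $v=u-a$, $q=p-z$ on $\Omega_3(x_0)\times(0,T)$ and applies the local $\varepsilon$-regularity criterion (Proposition~\ref{pro:epsilonreg} or~\ref{pro:epsilonregcritical}) to $(v,q)$ on $B_2^+(x_0)\times(\bar S-4,\bar S)$. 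The only place the global background assumption entered was in invoking Lemma~\ref{lem:localenergyestslemma} to bound $u$ and $p$ locally; here those bounds are \emph{hypotheses}. One technical tweak: to extract a positive power of $S$ from the pressure term, the paper assumes without loss of generality a slightly higher time exponent $\zeta_t/(1-\nu\zeta_t)$ on $p$, so H\"older gives
\[
\norm{v}_{L^{\xi_t}_t L^{\xi_x}_x(B^+_{2}(x_0) \times (0,S))} + \norm{q-[q]_{B_2^+(x_0)}}_{L^{\zeta_t}_t L^{\zeta_x}_x(B^+_{2}(x_0) \times (0,S))} \les_{\nu} S^{\nu} M^2.
\]
That is the entire proof.

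Your localization-and-extension strategy, by contrast, manufactures difficulties. First, $w=\phi u$ is not divergence-free ($\div w=\nabla\phi\cdot u$), a point you do not address; fixing it via Bogovskii adds more annular source terms. Second, you propose to absorb the commutator forcing into the strong solution $a$, but Proposition~\ref{pro:Lmsoltheory} covers only the \emph{unforced} equation, so you would need a separate mild-solution theory with $L^{\zeta_t}_t L^{\zeta_x}_x$-type forcing in the half space. Third, the pressure obstacle you correctly identify---the harmonic pressure generated by a divergence-form source in $\R^3_+$ may fail to be time-integrable---is real, and your proposed resolution (spatial smoothness near $x_0$ plus the $t^{-3/4-\delta}$ bound) is plausible but would itself be a lemma requiring proof. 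None of this is needed: the key observation you are missing is that \emph{one never has to extend $u$ at all}. Only the initial data gets localized (to build $a$), the remainder $v=u-a$ lives on the original local cylinder, and the $\varepsilon$-regularity machinery is already purely local.
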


This kind of refinement first appeared in~\cite[Theorem 1.1]{KMT21} without boundary.

As a consequence of Theorem~\ref{thm:localsmoothinglocal}, we have the following localized concentration theorem. We focus on near-boundary concentration below. To our knowledge, this type of localized concentration theorem is new even in the absence of boundary.

\begin{theorem}[Localized concentration]
	\label{thm:localizedconcentration}
	Let $(u,p)$ be a suitable weak solution on $Q_4^+$, in the sense of Definition~\ref{def.sws}, satisfying
\begin{equation}\label{boundedenergyonescalethm}
 \| \nabla u \|_{L^2_{t,x}(Q_4^+)} + \| p \|_{L^{\frac{3}{2}}_{t,x}(Q_4^+)} \leq M_{0}
\end{equation}
and
\begin{equation}
    \label{boundedkineticenergyallscalesthm}
    \sup_{(y_0,s_{0})\in Q^+_3}\sup_{0<r\leq 1}{r^{-\frac{1}{2}}}\|u\|_{L^{\infty}_{t}L^{2}_{x}(\Omega_r(y_0) \times (s_0-r^2,s_0))}\leq A_{0}.
\end{equation}
Suppose moreover that $u$ is locally bounded on $\overline{B^{+}_4}\times (-1,0)$ and that $(x^*,0)\in  \overline{B^{+}} \times \{0\}$ is a singular point of $u$.

 Then the above assumptions imply that there exists $M(M_0,A_0)>0$, $\bar{t}(M)\in (-16,0)$ and $S(M) \in (0,1]$ such that for all $M_0$ and $A_{0}$ sufficiently large\footnote{This means that there exists a universal constant $N_{\rm univ}\in [1,\infty)$ such that for all $M_{0}\geq N_{\rm univ}$ and $A_{0}\geq N_{\rm univ}$ we have the result. } the following holds true. For every $t\in [\bar{t}(M),0)$, we have the following concentration of the critical $L^{3}$ norm:
 \begin{equation}
		\label{eq:muhconcentrationineqlocal}
		\| u(\cdot,t) \|_{L^3(\Omega_{R(t)}(x^*))} > N_0,
	\end{equation}
	where
	\begin{equation}\label{concentrationradius}
	R(t) := 3 \times \sqrt{\frac{-t}{S(M)}}.
	\end{equation}
	 In the above, $M(M_0,A_0)$ is as in Proposition \ref{scaleinvarestnotcentre}. Furthermore, $S(M) = S(M,N_0,3)$ and $N_0$ are as in Theorem~\ref{thm:localsmoothinglocal}.
\end{theorem}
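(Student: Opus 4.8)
\emph{Proof proposal.} The plan is to reproduce the mechanism behind Theorem~\ref{thm:concentration} --- argue by contrapositive, then rescale under the Navier--Stokes symmetry so as to land on the hypotheses of a localized smoothing theorem --- but now in the purely local framework: Theorem~\ref{thm:localsmoothinglocal} plays the role of Theorem~\ref{thm:localsmoothinghalfspace}, and the local energy and pressure bounds it requires are supplied by Proposition~\ref{scaleinvarestnotcentre}. Essentially all the work sits in that proposition, because of the strong non-locality of the half-space pressure; the rest is bookkeeping around the scaling symmetry.

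I would set $\bar t(M) := -S(M)/9 \in (-1/9,0) \subset (-16,0)$ and prove the contrapositive: assume $\|u(\cdot,t)\|_{L^3(\Omega_{R(t)}(x^*))} \le N_0$ for some $t \in [\bar t(M),0)$, with $R(t) = 3\sqrt{-t/S(M)} \in (0,1]$. Put $\lambda := \sqrt{-t/S(M)} \le 1/3$, so $R(t) = 3\lambda$ and $\lambda^2 S(M) = -t$. Since $x^* \in \overline{B^+}$ and $R(t) \le 1$, the parabolic cylinders $\Omega_{R(t)}(x^*)\times(t,0)$ and $\Omega_{R(t)}(x^*)\times(-R(t)^2,0)$ lie inside $Q_3^+ \subset Q_4^+$, so \eqref{boundedenergyonescalethm} and \eqref{boundedkineticenergyallscalesthm} are available on them. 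Define the rescaled suitable weak solution
\[
\tilde u(y,\tau) := \lambda\, u(\lambda y,\, t+\lambda^2\tau), \qquad \tilde p(y,\tau) := \lambda^2\, p(\lambda y,\, t+\lambda^2\tau)
\]
on $\Omega_3(\tilde x^*)\times(0,S(M))$, with $\tilde x^* := x^*/\lambda$; the scaling preserves $\R^3_+$ and the no-slip condition on the flat boundary, and by scale-invariance of the $L^3_x$ norm, $\|\tilde u(\cdot,0)\|_{L^3(\Omega_3(\tilde x^*))} = \|u(\cdot,t)\|_{L^3(\Omega_{R(t)}(x^*))} \le N_0$.

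Then I would verify that $(\tilde u,\tilde p)$ satisfies the hypotheses of Theorem~\ref{thm:localsmoothinglocal} on $\Omega_3(\tilde x^*)\times(0,S(M))$ with $m=3$, $N=N_0$, and a constant $M = M(M_0,A_0)$. Local boundedness of $u$ near the time $t \in (-1,0)$ --- hence, by $\varepsilon$-regularity, regularity there --- gives $\tilde u(\cdot,\tau) \to \tilde u(\cdot,0)$ in $L^2(\Omega_3(\tilde x^*))$ as $\tau \to 0^+$, and $(\tilde u,\tilde p)$ inherits the local energy inequality from $(u,p)$. The $L^\infty_t L^2_x$ bound for $\tilde u$ is precisely the scale-invariant content of \eqref{boundedkineticenergyallscalesthm} at scale $R(t)$ about $x^*$. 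The remaining two quantities --- the Dirichlet energy of $\tilde u$ and $\tilde p$ in $L^{\zeta_t}_t L^{\zeta_x}_x$ --- are \emph{not} controlled by naively rescaling the fixed-scale bound \eqref{boundedenergyonescalethm}, which yields constants of order $\lambda^{-1}$ and $\lambda^{-4/3}$ that degenerate as $t \to 0^-$. Instead these are furnished by Proposition~\ref{scaleinvarestnotcentre}, which upgrades \eqref{boundedenergyonescalethm}--\eqref{boundedkineticenergyallscalesthm} to \emph{scale-invariant} estimates, uniform over the relevant sub-cylinders and with constant $M(M_0,A_0)$, on the Dirichlet energy and on a suitable local representative of the pressure in exactly the $L^{\zeta_t}_t L^{\zeta_x}_x$ norm demanded by Theorem~\ref{thm:localsmoothinglocal}. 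With the hypotheses in place, Theorem~\ref{thm:localsmoothinglocal} (the time interval has length $S(M)$, so $\bar S = S(M)$) gives $\sup_{\tau\in(0,S(M))}\tau^{1/2}\|\tilde u(\cdot,\tau)\|_{L^\infty(\Omega_1(\tilde x^*))} \les N_0 + N_0^{C_3}$; thus $\tilde u$ is bounded on a one-sided parabolic neighborhood of $(\tilde x^*,S(M))$, so, undoing the scaling, $u$ is bounded near $(x^*,0)$ --- contradicting the assumption that $(x^*,0)$ is a singular point. This establishes \eqref{eq:muhconcentrationineqlocal} for every $t\in[\bar t(M),0)$.

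The main obstacle is Proposition~\ref{scaleinvarestnotcentre}, and inside it the pressure estimate. Because in the half space $p \not\simeq u\otimes u$ and the harmonic part of the pressure may fail to be locally integrable in time up to the boundary, one cannot carry a fixed-scale $L^{3/2}_{t,x}$ pressure bound across scales; one must instead extract, from the scale-invariant kinetic-energy control \eqref{boundedkineticenergyallscalesthm}, a pressure bound of the correct homogeneity on cylinders running up to the singular time, via the Helmholtz--harmonic splitting and fractional pressure estimates in the spirit of~\cite{CK18,barker2019scale,MMPEnergy}. The matching Dirichlet-energy bound then follows by inserting this pressure control and the Type~I hypothesis into the local energy inequality and iterating over scales. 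Everything else is the same rescaling bookkeeping as in the proof of Theorem~\ref{thm:concentration}.
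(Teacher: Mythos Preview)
Your proof of Theorem~\ref{thm:localizedconcentration} is correct and follows essentially the same route as the paper: define $\bar t(M)=-S(M)/9$, argue by contrapositive, rescale with $\lambda=\sqrt{-t/S(M)}$, invoke Proposition~\ref{scaleinvarestnotcentre} to obtain the scale-invariant Dirichlet and $L^{\zeta_t}_tL^{\zeta_x}_x$ pressure bounds, and then apply Theorem~\ref{thm:localsmoothinglocal} to the rescaled solution. Your breakdown of which hypothesis supplies which bound (kinetic energy directly from~\eqref{boundedkineticenergyallscalesthm}, Dirichlet energy and pressure from Proposition~\ref{scaleinvarestnotcentre}) is accurate, and you are right to flag the $L^2$ continuity of $\tilde u$ at $\tau=0$, which the paper leaves implicit.

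One remark on your closing paragraph, which speculates on the proof of Proposition~\ref{scaleinvarestnotcentre} itself: the paper does \emph{not} proceed via the Helmholtz--harmonic splitting or the fractional pressure estimates of~\cite{CK18,barker2019scale,MMPEnergy}. Instead it first quotes a result of Mikhaylov~\cite{Mik09} (interior analogue due to Seregin~\cite{Ser06}) giving scale-invariant control of $E$ and $D_{3/2}$ from the Type~I hypothesis, and then upgrades the pressure to the $L^{\zeta_t}_tL^{\zeta_x}_x$ scale via local maximal regularity for the Stokes system (Proposition~\ref{localboundaryregstokes}) applied with $f=-u\cdot\nabla u$; the passage from boundary-centered to off-center cylinders is a standard covering. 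This is more elementary than the machinery you anticipate. Since this commentary is outside the scope of the theorem being proved, it does not affect the correctness of your proposal.
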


\subsection*{Strategy of the proof}

We begin by explaining the subcritical case $m>3$. The general strategy is as in~\cite{jiasverakselfsim}. We decompose the solution $u$ as
\begin{equation}
	\label{eq:muhdecomposition}
u = a + v,
\end{equation}
where $a$ is a strong solution of the Navier-Stokes equations with (sub)critical initial data $a_0$ satisfying $a_0 \equiv u_0$ in $B_2$ and $a_0 \equiv 0$ outside $B_3$. The remainder $v$ satisfies a perturbed Navier-Stokes equation~\eqref{eq:perturbednse}, which has lower order terms, and initial data $v_0$ satisfying $v_0|_{B_2} \equiv 0$. We then develop an $\varepsilon$-regularity criterion for the perturbed equation up to the initial time in order to establish that $v$ is H{\"o}lder continuous up to the initial time.\footnote{In general, it is an interesting observation that the special structure of the energy inequality allows one to localize the solution under a supercritical background assumption, namely, control on the energy. Na{\"i}vely, to localize a solution to a nonlinear partial differential equation (PDE) without any such structure, one would require that the solution belongs to a critical space.} Recall that in the whole space, an $\varepsilon$-regularity criterion up to the initial time will `kick in' on short times because the energy controls $\| u \|_{L^{10/3}_{t,x}}$ whereas $\varepsilon$-regularity requires smallness of $\| u \|_{L^{3}_{t,x}}$.\footnote{In four dimensions, although $L^3_{t,x}$ is `critical for the energy inequality,' one can imagine a va\-riation on localized smoothing under the additional assumption $M \ll 1$.}

It is standard that  $\varepsilon$-regularity also depends on the pressure $p$, and a typical choice of quantity is $\| p \|_{L^{3/2}}$.\footnote{In the interior case, there is a way to introduce a `local pressure', due to Wolf~\cite{wolflocalpressure}, see also Kwon~\cite[Theorem 1.6]{kwon2021role}, which contains an application to interior localized smoothing in a bounded domain. We expect that these arguments cannot be extended to the pressure at the boundary.} These choices are convenient for treating the term $\int up \, dx \,dt$ in the local energy inequality. This brings us to our first difficulty, namely, that for solutions of the linear Stokes equations in the half space with initial data in $L^2$, the pressure estimates are only known in $L^{4/3-}_{t,\loc} L^2_x$. Notice the low time integrability. We already mentioned this in connection with~\eqref{e.pressres}. Therefore, we must prove $\varepsilon$-regularity for the perturbed system~\eqref{eq:perturbednse} under a new assumption on the pressure. Essentially, we require smallness of $p \in L^{1+\delta_t}_t L^{2-\delta_x}_x$ with $0 < \delta_x \ll \delta_t \ll 1$. Our velocity assumption, which has the H{\"o}lder conjugate exponents, compensates for the low time integrability but remains controlled, with room to spare, by the energy space $L^\infty_t L^2_x \cap L^2_t H^1_x$. Our precise assumptions are discussed in~\eqref{eq:zetadef}. Our subcritical $\varepsilon$-regularity criterion is Proposition~\ref{pro:epsilonreg}.

Concerning implementation of the $\varepsilon$-regularity criterion, there are at least three strategies:
\begin{itemize}[leftmargin=*]
\item \emph{Direct iterative method of Caffarelli--Kohn--Nirenberg}~\cite{ckn}. Variations were exploited by Barker-Prange~\cite{barker2018localized} with critical lower order terms and by Dong-Gu~\cite{dongguboundarypartial} with boundary and Bian Wu~\cite{wu2021partially} in dimension $n=4$
\item \emph{Lin's compactness method}~\cite{lin}, see also Seregin-Ladyzhenskaya~\cite{ladyzhenskayaseregin}. This was adapted by Jia-Sverak~\cite{jiasverakselfsim} and Kang-Miura-Tsai~\cite{KMT21} to accomodate (sub)critical lower order terms.
\item \emph{De Giorgi-type method}, as implemented by Vasseur~\cite{vasseurpartialreg} (see also Wang-Wu~\cite{wangwuunified} in dimension $n=4$).
\end{itemize}
Our approach is to adapt Lin's compactness method to the new quantities. We describe the main idea in Section~\ref{sec:epsilonreg}, in a way which we hope is suitable for newcomers, once we have defined the necessary quantities $Y(R)$ and $Y_{\rm osc}(R)$. For now, we mention that Lin's method is actually comprised of two compactness arguments: one for $Y(R)$ and one for $Y_{\rm osc}(R)$.

There is a second difficulty, which concerns only the critical case $m=3$ and was also encountered in~\cite{barker2018localized,KMT21}. In this case, the first compactness argument in Lin's method still yields a subcritical Morrey bound `just below' $L^\infty$ in terms of $Y(R)$, but the second compactness argument fails to improve the decay of $Y_{\rm osc}$, the step which would yield H{\"o}lder continuity. This is because solutions of the limiting system~\eqref{eq:limitingstokessystem} are not guaranteed to be bounded, let alone H{\"o}lder continuous, when $m=3$. In~\cite{barker2018localized}, Barker and Prange overcome this difficulty by using parabolic regularity theory to bootstrap the regularity of the perturbation $v$, see~\eqref{eq:muhdecomposition}, from subcritical Morrey to $C^\alpha$. In principle, this is also possible here, but it is not necessary for our application. Rather, following~\cite{KMT21}, we combine (i) the subcritical Morrey estimates for the perturbation~$v$, (ii) the critical estimates for~$a$, and (iii) the standard $\varepsilon$-regularity criterion (without lower order terms) for~$u$ to conclude the $L^\infty$-smoothing~\eqref{eq:linfitysmoothingu}. Unlike~\cite{barker2018localized}, we do not demonstrate that~$v$ is H{\"o}lder continuous when $m=3$.

Finally, a small novelty of our approach, compared to~\cite{KMT21}, is that we build smallness of $a$ into our compactness arguments, which completely bypasses the estimates in~\cite[Section~4]{KMT21}.

\subsection*{Notation}

Let $z_0 = (x_0,t_0) \in \R^{3+1}$ and $R > 0$. We define the ball $B_R(x_0) := \{ x \in \R^3 : |x - x_0| < R \}$ and parabolic ball $Q_R(z_0) := B_R(x_0) \times (t_0-R^2,t_0)$.

Recall that $\R^3_+ := \{ (x_1,x_2,x_3) \in \R^3 : x_3 > 0 \}$. If $x_0 \in \p \R^3_+$, we define $B^+_R(x_0) := B_R(x_0) \cap \R^3_+$ and $Q_R^+(z_0) := B_R^+(x_0) \times (t_0-R^2,t_0)$.

We define $\p_{\rm flat} B^+_R(x_0) := B_R(x_0) \cap \{ d(x) = d(x_0) \}$, where $d(x) := x \cdot e_3$ is the signed distance to $\p \R^3_+$. Similarly, $\p_{\rm flat} Q^+_R(z_0) := \p_{\rm flat} B^+_R(x_0) \times (t_0 - R^2,t_0)$.

To simultaneously treat the interior and boundary scenarios, we introduce the notation $B^\iota_R(x_0)$ and $Q^{\iota}_R(z_0)$, where $\iota \in \{ {\rm int}, {\rm bd} \}$.
Here, $B^{\rm int}_R(x_0) = B_R(x_0)$ and $B^{\rm bd}_R(x_0) = B^+_R(x_0)$, with an analogous convention for parabolic balls $Q^\iota_R(z_0)$.

When $x_0 = 0$ (resp. $z_0 = 0$, $R=1$), we may omit $x_0$ (resp. $z_0$, $R$) from the above notation. As defined above, $\Omega_R(x_0) := B_R(x_0) \cap \R^3_+$.

Often we do not distinguish between scalar- and vector-valued function spaces in notation.

For vectors $u$ and $v$, we write $(u \otimes v)_{ij} = u_i v_j$. For matrices $F$ and $G$, we write $F : G = \sum_{ij} F_{ij} G_{ij}$. For matrix-valued $F$, we write $(\div F)_i = \sum_j \p_j F_{ij}$.

\section{Preliminaries}

We define\footnote{More generally, we could define scaling exponents for a variety of homogeneous function spaces to keep track of viable embeddings and interpolations. For example, $\# L^q_t L^p_x = \#(p,q)$.}
the \emph{scaling exponent} $\#(p,q)$, whenever $(p,q) \in [1,\infty]^2$, by
\begin{equation}
	\#(p,q) = - \frac{3}{p} - \frac{2}{q}.
\end{equation}
If $f \in L^1_\loc(\R^{3+1})$, we have
\begin{equation}
	\norm{f(\lambda \cdot)}_{L^q_t L^p_x(\R^{3+1})} = \lambda^{\#(p,q)} \norm{f}_{L^q_t L^p_x(\R^{3+1})}.
\end{equation}

\begin{lemma}[Local energy and pressure estimates]
	\label{lem:localenergyestslemma}
Let $M\geq 1$. Under the assumptions of Theorem~\ref{thm:localsmoothinghalfspace}, there exists a time
\begin{equation}
    \label{eq:lowerboundonexistence}
	S_1 \ges \min(1,M^{-96})
\end{equation}
 satisfying the following property. Let $\bar{S}_1 = \min(S_1,T)$. Then
\begin{equation}
	\label{eq:localenergyests}
	\sup_{x_0 \in \R^3_+}\left(\sup_{t \in (0,\bar{S}_1)} \int_{\Omega_2(x_0)} |u(x,t)|^2 \, dx + \iint_{\Omega_2(x_0) \times (0,\bar{S}_1)} |\nabla u|^2 \, dx \, ds\right) \les M^2
\end{equation}
and $p$ satisfies 
\begin{equation}
	\label{eq:pressureuotimesu}
	\sup_{x_0 \in \R^3_+} \norm{p - [p]_{\Omega_2(x_0)}}_{L^{\frac43,\infty}_t L^2_x(\Omega_2(x_0) \times (0,\bar{S}_1))} \les 
	M^2.
\end{equation}
\end{lemma}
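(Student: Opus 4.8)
The plan is to establish the two estimates \eqref{eq:localenergyests} and \eqref{eq:pressureuotimesu} as consequences of the theory of local energy solutions in the half space developed in \cite{MMPEnergy}, combined with the local $L^m$ assumption on $u_0$, much as in the whole-space analogue used in \cite{jiasverakselfsim,barker2018localized}. The broad shape: (1) use the a priori estimates for local energy solutions to get a short-time bound on the energy in terms of $\|u_0\|_{L^2_{\uloc}}$; (2) feed the same short time into the pressure representation formulas of \cite{MMPEnergy}; (3) track the explicit $M$-dependence to obtain the quantitative lower bound $S_1 \gtrsim \min(1,M^{-96})$.

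First I would recall the global a priori bound for local energy solutions in the half space. By \cite[Proposition/Theorem in][]{MMPEnergy} there is a time $S_1 = S_1(M)$, with the quantitative dependence $S_1 \gtrsim \min(1,M^{-96})$ coming from the iteration of the local energy inequality against the uniformly-local $L^2$ norm (the exponent $96$ is a bookkeeping artifact of how the uloc norm propagates; one splits into near/far pieces relative to $B(x_0)$, uses the decay condition \eqref{eq:decaycond} to absorb the far pressure interaction, and runs a Grönwall/bootstrap on $E(t) := \sup_{x_0}\int_{\Omega_2(x_0)}|u|^2$). On $(0,\bar S_1)$ this yields \eqref{eq:localenergyests}: $\sup_t \int_{\Omega_2(x_0)}|u|^2 + \iint_{\Omega_2(x_0)\times(0,\bar S_1)}|\nabla u|^2 \lesssim M^2$, uniformly in $x_0$. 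Note the $L^m$ hypothesis on $u_0$ is \emph{not} needed here — only $\|u_0\|_{L^2_{\uloc}}\le M$ and \eqref{eq:decaycond} — but since the lemma is stated ``under the assumptions of Theorem~\ref{thm:localsmoothinghalfspace}'' it is harmless to keep it.

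For the pressure estimate \eqref{eq:pressureuotimesu}, I would invoke the half-space pressure decomposition and the local-in-space pressure estimates from \cite[Proposition 2.1]{MMPEnergy} (the result alluded to in item (4) of the introduction and in Lemma~\ref{lem:localenergyestslemma}'s surrounding discussion): for a local energy solution, $p - [p]_{\Omega_2(x_0)}$ splits into a Helmholtz (``$u\otimes u$'') part, controlled in $L^\infty_t L^2_x$ of $\Omega_2(x_0)$ by the energy via Calderón–Zygmund and the uloc structure, plus a harmonic part carrying the nonlocal boundary contribution, which — because of the $O(t^{-3/4-\delta})$ singularity forced by \eqref{e.pressres} — is only in $L^{4/3,\infty}_t L^2_x$; this is precisely why the Lorentz-in-time exponent $4/3,\infty$ (rather than $L^{4/3}_t$ or better) appears in \eqref{eq:pressureuotimesu}. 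Plugging the energy bound $\lesssim M^2$ from Step (1) into these estimates, and using that the far-field contributions are controlled uniformly via the uloc norm and \eqref{eq:decaycond}, gives $\sup_{x_0}\|p-[p]_{\Omega_2(x_0)}\|_{L^{4/3,\infty}_t L^2_x(\Omega_2(x_0)\times(0,\bar S_1))} \lesssim M^2$.

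The main obstacle is the pressure, exactly for the reasons flagged in items (3)–(4) of the introduction: unlike the whole space, $p \not\simeq u\otimes u$ in $\R^3_+$, so one cannot simply bound $p$ by the energy via a singular integral. The harmonic-pressure piece is genuinely nonlocal — it feels the whole half space through the boundary — and it is only barely integrable in time ($L^{4/3-}_t$, not $L^{3/2}_t$). Controlling this piece requires the resolvent/Dunford-formula estimates of \cite{MMPEnergy} together with the uniform-locally-bounded energy and the decay condition \eqref{eq:decaycond} to ensure the far-field boundary data does not destroy the uniform-in-$x_0$ bound; getting the constant to be $\lesssim M^2$ (and not worse) and the existence time to be $\gtrsim M^{-96}$ is the quantitatively delicate part, but it is a direct transcription of \cite[Proposition 2.1]{MMPEnergy} and the local energy solution construction there. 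I would therefore structure the proof as: quote the construction and a priori bounds from \cite{MMPEnergy}, specialize the existence time to track $M$, then quote and apply the pressure proposition.
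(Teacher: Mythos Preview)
Your proposal is correct and takes essentially the same approach as the paper: the paper does not give a self-contained proof but simply records that the velocity estimate~\eqref{eq:localenergyests} and the lower bound~\eqref{eq:lowerboundonexistence} are restatements of Corollary~5.9 and Proposition~5.7 in~\cite{MMPEnergy} (with the specific exponent $96$ traced to the equation below~(5.16) there), and that the pressure estimate~\eqref{eq:pressureuotimesu} is described in \cite[Propositions~2.1--2.3]{MMPEnergy}. Your observation that the local $L^m$ hypothesis on $u_0$ plays no role in this lemma is also correct.
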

In particular, it follows from \eqref{eq:localenergyests} that whenever $\#(p,q) = -3/2$ and $q \in [2,+\infty]$,
\begin{equation}
	\sup_{x_0 \in \R^3_+} \norm{u}_{L^q_t L^p_x(\Omega_2(x_0)  \times (0,\bar{S}_1))} \les M.
\end{equation}
The velocity estimate is a restatement of Corollary 5.9 and Proposition 5.7 in\  \cite{MMPEnergy}.\footnote{See specifically the equation below (5.16) in that paper for the lower bound~\eqref{eq:lowerboundonexistence} on the time $S_1$. Notice that the notation $M$ is different therein and may be fixed to $M=2$.} The pressure 
estimates are described in \cite[Propositions 2.1--2.3]{MMPEnergy}. 

In the following, we write $\mathcal{Q}^\iota = Q^\iota_R(z_0)$. We refer to the subsection `Notation' of the `Introduction' where these parabolic cylinders and further notations are defined.

Let $a \in L^5_{t,x}(\mathcal{Q}^\iota)$.
Consider the following \emph{perturbed Navier-Stokes equations}:
\begin{equation}
\label{eq:perturbednse}
\tag{${\rm NS}_a$}
\left\lbrace
\begin{aligned}
	\p_t v - \Delta v + (v + a) \cdot \nabla v + \div (a \otimes v) + \nabla q &= 0 \\
	\div v &= 0 \, .
	\end{aligned}
	\right.
\end{equation}

\begin{definition}\label{def.sws}
We say that $(v,q)$ is a \emph{suitable weak solution} of~\eqref{eq:perturbednse} in $\mathcal{Q}^\iota$ if the following criteria are met:
\begin{enumerate}[leftmargin=*]
\item $v \in L^\infty_t L^2_x \cap L^2_t H^1_x(\mathcal{Q}^\iota)$, $q \in L^1(\mathcal{Q}^\iota)$, and $vq \in L^1(\mathcal{Q}^\iota)$. If $\iota = {\rm bd}$, then also $v|_{\p_{\rm flat} \mathcal{Q}^\iota} = 0$.
\item \eqref{eq:perturbednse} is satisfied in $\mathcal{Q}^\iota$ in the sense of distributions.
\item (Weak continuity in time) For all $\varphi \in L^2(B^\iota_R(x_0))$,
\begin{equation}
	\Big( t \mapsto \int_{B^\iota_R(x_0)} v(x,t) \varphi \, dx \Big) \in C(t_0 + [-R^2,0]).
\end{equation}
\item (Local energy inequality) For all non-negative $\Phi \in C^\infty_0(B_R(z_0) \times (t_0-R^2,+\infty))$ and $t \in  (t_0-R^2,t_0)$,
\begin{equation}
\label{eq:localenergyineq}
\begin{aligned}
	&\int_{B^\iota_R(x_0)} |v(x,t)|^2 \Phi(x,t) \, dx +2 \iint_{B^\iota_R(x_0) \times (-\infty,t)} |\nabla v|^2 \Phi \, dx \, ds  \\
	&\quad \leq \iint_{B^\iota_R(x_0) \times (-\infty,t)} |v|^2 (\p_t + \Delta) \Phi  \, dx \, ds + \iint_{B^\iota_R(x_0) \times (-\infty,t)} (|v|^2 + 2 q) v \cdot \nabla \Phi \, dx \, ds \\
	&\quad\quad + \iint_{B^\iota_R(x_0) \times (-\infty,t)} |v|^2 a \cdot \nabla \Phi \, dx \, ds\\
	&\quad\quad +  2\iint_{B^\iota_R(x_0) \times (-\infty,t)} a \otimes v : (\Phi \nabla  v  + v \otimes \nabla \Phi) \, dx \, ds.
	\end{aligned}
\end{equation}
\end{enumerate}
\end{definition}

Let $\zeta = (\zeta_x,\zeta_t)$ satisfying
\begin{equation}
	\label{eq:zetadef}
	\zeta_x = 2-\delta_x, \quad \zeta_t = 1+\delta_t, \quad \#\zeta = -\frac{7}{2} + \delta_0,
\end{equation}
 where $0 < \delta_x, \delta_t, \delta_0 \ll 1$. These are the exponents for the pressure $q$. Let $\xi = (\xi_x, \xi_t)$ be its H{\"o}lder conjugate. Hence,
 \begin{equation}
	\#\xi = -\frac{3}{2} - \delta_0.
 \end{equation}
 These are exponents for the velocity $v$. In principle, the constants below may depend on $\zeta$ and $\xi$. We consider them to be fixed in the following analysis.

 We also require exponents for the perturbation $a$. Let $m \in [3,+\infty)$ and let $\#(p_1,q_1) = -3/2$ satisfying $q_1 \in (2,+\infty)$ and $1/\zeta_t = 1/2+1/q_1$ (in particular, heuristically, $q_1 = 2^+$ and $p_1 = 6^-$). Let $\#(p_2,q_2) = -5/2$ satisfying $q_2 = 2-\delta_2$ where $0 < \delta_2 \ll 1$ (in particular, $p_2=2^+$). Similarly to $\zeta$ and $\xi$, we consider $p_1,q_1,p_2,q_2$ as fixed and suppress dependence on them in the notation $\les$. We write
 \begin{equation}\label{e.norma}
 \begin{aligned}
	&\norm{a}_{\mathbf{A}_m(\mathcal{Q}^\iota)} = R^{1-\frac{3}{m}}\norm{a}_{L^{5m/3}_{t,x}(\mathcal{Q}^\iota)} + \\
	&\quad R^{\#(p_1,q_1)+1}\norm{a}_{L^{q_1}_t L^{p_1}_x(\mathcal{Q}^\iota)} + R^{\#(p_2,q_2)+2}\norm{\nabla a}_{L^{q_2}_t L^{p_2}_x(\mathcal{Q}^\iota)} \, .
	\end{aligned}
 \end{equation}
 Notice that $\#(5m/3,5m/3) = -3/m$.
 We also define the Morrey-type space
 \begin{equation}
	\norm{a}_{\mathbf{M}_m(Q_R(z_0))} = \sup_{\bar{Q}} \norm{a}_{\mathbf{A}_m(\bar{Q})}
 \end{equation}
 where $\bar{Q}$ ranges over parabolic balls $Q_r(z_1) \subset Q_R(z_0)$. Similarly,
 \begin{equation}
	\norm{a}_{\mathbf{M}_m(Q^+_R(z_0))} = \sup_{\bar{Q}} \norm{a}_{\mathbf{A}_m(\bar{Q})} + \sup_{\bar{Q}^+} \norm{a}_{\mathbf{A}_m(\bar{Q}^+)},
 \end{equation}
 where $\bar{Q}$ ranges over parabolic balls $Q_r(z_1) \subset Q^+_R(z_0)$
 and $\bar{Q}^+$ ranges over parabolic half-balls $Q_r^+(z_1) \subset Q^+_R(z_0)$ with $z_0 \in \p_{\rm flat} Q^+$. In particular, $\norm{a}_{\mathbf{A}_m(Q^\iota_R(z_0))} \leq \norm{a}_{\mathbf{M}_m(Q^\iota_R(z_0))}$ and $\norm{a}_{\mathbf{M}_m(Q^\iota_r(z_0))} \leq \norm{a}_{\mathbf{M}_m(Q^\iota_R(z_0))}$ when $r \in (0,R)$.

 Finally, we demonstrate that, in the setting of the proof of Theorem~\ref{thm:concentration}, the coefficient $a$ will belong to the desired Morrey spaces.

 \begin{lemma}
    \label{lem:acontrolledinMm}
     Let $m \in [3,+\infty)$, $u_0 \in L^m_\sigma(\R^3_+)$ with $\| u_0 \|_{L^m(\R^3_+)} \leq N$, and $a$ be the strong solution on $\R^3_+ \times (0,T_m(u_0))$ constructed in Proposition~\ref{pro:Lmsoltheory}. Consider $a$ as extended backward-in-time by zero. Then, for all $t_0 \in (0,T_m)$ and interior and boundary balls $Q^\iota_2(z_0)$, where $\iota \in \{ {\rm int}, {\rm bd} \}$ and $z_0 = (x_0,t_0)$, we have
     \begin{equation}
        \| a \|_{\mathbf{M}_m(Q^\iota_2(z_0))} \les_m N.
     \end{equation}
 \end{lemma}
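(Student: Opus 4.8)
The plan is to reduce everything to the scaling properties of the strong solution $a$ constructed in Proposition~\ref{pro:Lmsoltheory}, combined with the known regularity of $L^m$ mild solutions to Navier-Stokes in the half space. First I would recall that such a strong solution, with $\|u_0\|_{L^m(\R^3_+)} \le N$ and $m \in [3,+\infty)$, enjoys the standard smoothing estimates
\begin{equation}
\label{eq:asmoothing}
t^{\frac{1}{2}(1 - \frac{3}{p})} \| a(\cdot,t) \|_{L^p(\R^3_+)} + t^{1 - \frac{3}{2p}} \| \nabla a(\cdot,t) \|_{L^p(\R^3_+)} \les_m N
\end{equation}
for $p \in [m,+\infty]$ (and $p \in [m,+\infty)$ for the gradient term), valid on a time interval $(0,T_m)$ with $T_m \gtrsim_m N^{-\sigma}$ for an appropriate power $\sigma$ — or, since we only need $t_0 \in (0,T_m)$ here, simply on $(0,T_m)$. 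These are the half-space analogues of the usual Kato-type estimates; I would cite them from Proposition~\ref{pro:Lmsoltheory}.

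The next step is to convert the pointwise-in-time bounds \eqref{eq:asmoothing} into the space-time Lebesgue norms appearing in $\mathbf{A}_m$, and to do so on an arbitrary sub-parabolic-ball $Q_r(z_1) \subset Q^\iota_2(z_0)$, with the correct power of $r$ so that the bound is uniform in $r$. Write $z_1 = (x_1,t_1)$, so the time interval is $(t_1 - r^2, t_1) \subset (0,t_0) \subset (0,T_m)$. For the first term of \eqref{e.norma}, by \eqref{eq:asmoothing} with $p = 5m/3$ one has $\|a(\cdot,t)\|_{L^{5m/3}(\R^3_+)} \les_m N t^{-\frac{1}{2}(1-\frac{9}{5m})}$, and since $t \ge t_1 - r^2 \ge 0$ this is not integrable to the power $5m/3$ near $t=0$ unless one is careful; the right move is instead to observe that the exponent $-\frac{1}{2}(1-\frac{9}{5m}) \cdot \frac{5m}{3} = -\frac{5m}{6} + \frac{3}{2} \in (-1,0)$ precisely when $m < 3$... so for $m \ge 3$ one does \emph{not} have integrability on intervals touching $t = 0$, and this is exactly why $a$ is extended by zero backward in time and why sub-balls $Q_r(z_1)$ with $t_1 - r^2 < 0$ only see the zero extension on the negative part — the contribution comes only from $t \in (0, t_1) \cap (t_1 - r^2, t_1)$, and on that set one uses instead the estimate against $\|a\|_{L^\infty_t L^m_x}$ (which holds with $t^0$, i.e.\ $\|a(\cdot,t)\|_{L^m} \les_m N$ uniformly) interpolated with the $L^\infty_x$ bound, to get an estimate that scales like $r^{-(1-\frac{3}{m})}$ and thus kills the prefactor $r^{1-\frac{3}{m}}$. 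The same interpolation-plus-scaling bookkeeping, now using the gradient estimate in \eqref{eq:asmoothing}, handles the $L^{q_1}_t L^{p_1}_x$ and $L^{q_2}_t L^{p_2}_x(\nabla a)$ terms, using that $\#(p_1,q_1)+1$ and $\#(p_2,q_2)+2$ are exactly the scaling-critical powers so that each term is scale-invariant under $a \mapsto \lambda a(\lambda \cdot, \lambda^2 \cdot)$ and hence the $r$-powers match; the half-ball case $Q_r^+(z_1)$ with $z_1 \in \p_{\rm flat}$ is identical since all the estimates \eqref{eq:asmoothing} are global on $\R^3_+$ and restriction to a half-ball only decreases norms. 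Taking the supremum over all such $\bar Q$ and $\bar Q^+$ gives $\|a\|_{\mathbf{M}_m(Q^\iota_2(z_0))} \les_m N$.

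The main obstacle, as the discussion above indicates, is the borderline time-integrability when $m = 3$ (and, more mildly, for $m$ close to $3$): the naive Kato estimate $\|a(\cdot,t)\|_{L^{5m/3}} \sim N t^{-\frac{1}{2}(1-\frac{9}{5m})}$ is \emph{not} square-summable (to the relevant power) on a time interval abutting $t=0$ when $m$ is small, so one cannot simply plug \eqref{eq:asmoothing} into the space-time norm. The resolution is structural and is the reason the lemma is even true: every sub-ball $Q_r(z_1) \subset Q^\iota_2(z_0)$ has radius $r \le 2$ and, crucially, either lies in $\{t>0\}$ (where $a$ is a genuine smooth solution and the only blow-up is at $t=0$, which the sub-ball may still touch) — so one must split the time integral at, say, $t = t_1 - r^2/2$ versus the earlier part, use the $L^\infty_t L^m_x$ bound (no time weight) on the singular near-$t_1-r^2$ piece together with spatial interpolation, and use \eqref{eq:asmoothing} with room to spare on the later piece where $t \gtrsim r^2$. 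Because the $\mathbf{A}_m$ norm uses the $r$-power $R^{1-3/m}$ matched exactly to $L^{5m/3}_{t,x}$ scaling, the $L^\infty_t L^m_x$-based bound produces precisely $r^{-(1-3/m)} \cdot r^{1-3/m} = O(1)$ times $N$, closing the estimate. I would carry this out once carefully for the $L^{5m/3}_{t,x}$ term and then remark that the $L^{q_1}_t L^{p_1}_x$ and $\nabla a$ terms are handled by the same splitting, citing the Kato estimates of Proposition~\ref{pro:Lmsoltheory} for the gradient.
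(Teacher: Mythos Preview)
Your identification of the borderline integrability problem for the $L^{5m/3}_{t,x}$ term is correct: from the pointwise bound $\|a(\cdot,t)\|_{L^{5m/3}_x(\R^3_+)} \les_m N t^{-3/(5m)}$ one gets, upon raising to the $5m/3$ power, exactly $t^{-1}$, which is not integrable at $t=0$. However, your proposed fix has a gap. You suggest using the uniform bound $\|a(\cdot,t)\|_{L^m_x} \les_m N$ ``together with spatial interpolation'' on the portion of the time interval near $t=0$; but $5m/3 > m$, so there is no H{\"o}lder inequality from $L^m_x$ to $L^{5m/3}_x$ on a ball. Interpolating $L^m_x$ against $L^\infty_x$ simply reproduces the same $t^{-3/(5m)}$ bound, and the claimed power-counting $r^{-(1-3/m)} \cdot r^{1-3/m} = O(1)$ does not correspond to an actual inequality you have written down.

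The paper's resolution is different and simpler: it does \emph{not} derive the $L^{5m/3}_{t,x}$ bound from the pointwise-in-time Kato estimates at all. Instead it invokes directly the global space-time Lebesgue estimate $\|a\|_{L^{5m/3}_{t,x}(\R^3_+ \times (0,T_m))} \les_m N$, which is listed as part of Proposition~\ref{pro:Lmsoltheory} (estimate~\eqref{eq:linearestimatelebesguem}) and is proved in the appendix via Marcinkiewicz interpolation in the style of Giga. This is a genuine Strichartz-type estimate, not a consequence of~\eqref{eq:linearestimatem}--\eqref{eq:linearderivestm}. Once one has it, the $L^{5m/3}$ part of $\mathbf{A}_m$ on any sub-ball is controlled trivially by restriction, and the prefactor $r^{1-3/m}$ is harmless since $r \le 2$.

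For the remaining two pieces of $\mathbf{A}_m$ your approach is essentially right, and no splitting is needed. The paper packages the same computation more cleanly: the pointwise bounds~\eqref{eq:linearestimatem}--\eqref{eq:linearderivestm} say precisely that $a \in L^{l_1,\infty}_t L^{p_1}_x$ and $\nabla a \in L^{l_2,\infty}_t L^{p_2}_x$ with $\#(p_1,l_1) = -3/m$ and $\#(p_2,l_2) = -1-3/m$, and then a single application of H{\"o}lder's inequality for Lorentz spaces in time converts weak-$L^{l_i}$ to strong-$L^{q_i}$ on an interval of length $r^2$, producing exactly the power of $r$ that matches the prefactor in~\eqref{e.norma}.
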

 \begin{proof}
    Recall from~\eqref{e.norma} that $\| \cdot \|_{\mathbf{A}_m}$ in the definition of $\|\cdot\|_{\mathbf{M}_m}$ is comprised of three parts. The $L^{5m/3}_{t,x}$ part is controlled by~\eqref{eq:linearestimatelebesguem}. Regarding the remaining two parts, we observe from~\eqref{eq:linearestimatem} and~\eqref{eq:linearderivestm} that
    \begin{equation}
        \| a \|_{L^{l_1,\infty}_t L^{s_1}_x(\R^3_+ \times (0,T_m))} + \| \nabla a \|_{L^{l_2,\infty}_t L^{s_2}_x(\R^3_+ \times (0,T_m))} \les_m N
    \end{equation}
    whenever $s_1, s_2 \geq m$, $\#(s_1,l_1) = -3/m$, and $\#(s_2,l_2) = -1-3/m$. Applying H{\"o}lder's inequality for Lorentz spaces in the time variable yields the desired estimate when $s_1 = p_1$ and $s_2 = p_2$.
 \end{proof}

 \section{$\varepsilon$-regularity for perturbed Navier-Stokes system}
 \label{sec:epsilonreg}

In this section, all constants are allowed to depend on $m$ unless specified otherwise.

\begin{proposition}[Subcritical $\varepsilon$--regularity]
\label{pro:epsilonreg}
Let $m > 3$.
There exist $\varepsilon_{\rm CKN}, \bar{\alpha} > 0$ satisfying the following property. 
If $(v,q)$ is a suitable weak solution of the perturbed Navier-Stokes equations~\eqref{eq:perturbednse} on $Q^\iota$ in the sense of Definition \ref{def.sws} sa\-tisfying
\begin{equation}
	\norm{v}_{L^{\xi_t}_t L^{\xi_x}_x(Q^\iota)} + \norm{q}_{L^{\zeta_t}_t L^{\zeta_x}_x(Q^\iota)} + \norm{a}_{\mathbf{M}_m(Q^\iota)} \leq \varepsilon_{\rm CKN},
\end{equation}
then $v \in C^{\bar{\alpha}}_{\rm par}(Q_{1/2}^\iota)$ and
\begin{equation}
	\norm{v}_{C^{\bar{\alpha}}_{\rm par}(Q_{1/2}^\iota)}\les \norm{v}_{L^{\xi_t}_t L^{\xi_x}_x(Q^\iota)} + \norm{q}_{L^{\zeta_t}_t L^{\zeta_x}_x(Q^\iota)}.
\end{equation}
\end{proposition}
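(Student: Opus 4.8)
The plan is to prove Proposition~\ref{pro:epsilonreg} by Lin's compactness method, adapted to the mixed Lebesgue quantities $\norm{v}_{L^{\xi_t}_t L^{\xi_x}_x}$, $\norm{q}_{L^{\zeta_t}_t L^{\zeta_x}_x}$, and the Morrey norm $\norm{a}_{\mathbf{M}_m}$. First I would introduce, for a suitable weak solution $(v,q)$ on $Q^\iota_R(z_0)$, two scale-invariant excess quantities: a ``size'' functional
\begin{equation*}
Y(R) := R^{-\#\xi - 1}\norm{v}_{L^{\xi_t}_t L^{\xi_x}_x(Q^\iota_R)} + R^{-\#\zeta - 2}\norm{q - [q]}_{L^{\zeta_t}_t L^{\zeta_x}_x(Q^\iota_R)} + \norm{a}_{\mathbf{M}_m(Q^\iota_R)},
\end{equation*}
and an ``oscillation'' functional $Y_{\rm osc}(R)$ measuring the deviation of $v$ from its parabolic average (plus the relevant $q$ and $a$ contributions, suitably normalized). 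The key is a decay-iteration lemma of the form: there exist $\theta \in (0,1/2)$, $C_* \geq 1$, and $\varepsilon_* > 0$ such that if $Y(R) \leq \varepsilon_*$, then $Y(\theta R) \leq \frac{1}{2} Y(R)$, and similarly a second iteration improving $Y_{\rm osc}$ once $Y$ is small. Iterating gives $Y(\theta^k R) \to 0$ geometrically and $Y_{\rm osc}(\theta^k R) \les (\theta^k)^{\bar\alpha}$, which by a standard Campanato/Morrey characterization yields $v \in C^{\bar\alpha}_{\rm par}(Q^\iota_{1/2})$ with the stated bound.

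The heart of the argument is the first compactness step, proved by contradiction. Suppose the decay estimate for $Y$ fails: there is a sequence $(v^{(k)}, q^{(k)}, a^{(k)})$ of suitable weak solutions on $Q^\iota$ with $\eta_k := Y^{(k)}(1) \to 0$ but $Y^{(k)}(\theta) > \frac{1}{2}\eta_k$ for every $\theta$ (or, more precisely, failing the decay for a fixed $\theta$ to be chosen). Normalize by setting $V^{(k)} := v^{(k)}/\eta_k$, $P^{(k)} := (q^{(k)} - [q^{(k)}])/\eta_k$, and note $a^{(k)}/\eta_k$ has $\mathbf{M}_m$ norm $\leq 1$. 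The normalized quantities are bounded in the relevant mixed-norm spaces; using the local energy inequality (which, crucially, closes under the supercritical-looking but energy-controlled quantities — here one invokes that $\norm{v}_{L^{\xi_t}_t L^{\xi_x}_x}$ with $\#\xi = -3/2 - \delta_0$ is controlled with room to spare by $L^\infty_t L^2_x \cap L^2_t H^1_x$), one extracts uniform local energy bounds for $V^{(k)}$ and, via Aubin–Lions, strong $L^2_{t,x}$ convergence to a limit $V$ on an interior subcylinder. Since $\eta_k \to 0$, the quadratic and $a$-dependent terms in~\eqref{eq:perturbednse} vanish in the limit, so $(V, P)$ solves the linear (Stokes) system~\eqref{eq:limitingstokessystem} — in the boundary case, with the no-slip condition on $\p_{\rm flat}$ inherited from $v^{(k)}|_{\p_{\rm flat}} = 0$. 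Interior-in-time regularity for this linear system (which, for $m > 3$, does give boundedness/Hölder continuity of $V$, unlike the borderline $m = 3$ case) furnishes the decay $Y_V(\theta) \les \theta^{\mu}$ for some $\mu > \bar\alpha$, and passing to the limit contradicts $Y^{(k)}(\theta) > \frac{1}{2}\eta_k$ for $\theta$ chosen small depending only on $\mu$. The second compactness step for $Y_{\rm osc}$ is analogous, subtracting off the limiting linear solution's parabolic polynomial.

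The main obstacle, and where the half-space departs from~\cite{jiasverakselfsim,KMT21}, is \textbf{the pressure}. Two difficulties: (i) in the compactness step one must pass the pressure $q^{(k)}$ to the limit and identify the limiting pressure as that of the Stokes system, but in the half space the pressure does not decompose as $q \simeq u \otimes u$ (point (3) of the Introduction) — there is a genuinely nonlocal harmonic part with poor time integrability. The resolution is that the $\varepsilon$-regularity hypothesis already \emph{assumes} the needed smallness of $\norm{q}_{L^{\zeta_t}_t L^{\zeta_x}_x}$ on the given cylinder, and the low time-integrability exponent $\zeta_t = 1 + \delta_t$ is precisely calibrated (via~\eqref{e.pressres} and the $O(t^{-3/4-\delta})$ short-time bound) so that $vq \in L^1$ and the pressure term $\iint (|v|^2 + 2q) v \cdot \nabla \Phi$ in~\eqref{eq:localenergyineq} is controlled by Hölder's inequality using the conjugate exponent $\xi$ for $v$; (ii) one needs an interior pressure decay estimate for the limiting linear Stokes system to close the iteration — this is where the quantities $Y(R)$ include the pressure oscillation $q - [q]$, and one uses linear regularity theory (decay of $\nabla^j$ of Stokes solutions on shrinking interior cylinders, plus a boundary version via~\cite{MMPEnergy}-type estimates) to propagate smallness of the pressure excess inward. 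Handling the boundary case $\iota = {\rm bd}$ uniformly — in particular ensuring the limiting system retains the no-slip condition and that the linear decay estimates hold up to $\p_{\rm flat}$ — is the most delicate bookkeeping, and I would treat it by working throughout with half-balls and citing the near-boundary Stokes estimates rather than localizing away from the boundary.
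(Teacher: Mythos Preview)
Your overall plan---Lin's two-step compactness method (a Morrey step for $Y$, then a Campanato step for $Y_{\rm osc}$), iterated and combined with Campanato's characterization---matches the paper's. But two steps, as you describe them, would not go through.

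\textbf{Pressure in the compactness argument.} You propose to pass $q^{(k)}$ to a limit $P$ and use decay for $P$ from linear Stokes regularity. The problem is that the pressure converges only \emph{weakly} in $L^{\zeta_t}_t L^{\zeta_x}_x$, whereas to contradict $Y^{(k)}(\theta) > \theta^\alpha$ you need an \emph{upper} bound on a norm of $\pi^{(k)}$ at scale $\theta$; weak convergence gives none. The paper instead decomposes $\pi^{(k)}$ itself: with $f^{(k)}$ the nonlinear and lower-order forcing, one solves a \emph{global} Stokes problem $(\tilde w^{(k)},\tilde\pi^{(k)})$ with right-hand side $-\chi f^{(k)}$, so that $\tilde\pi^{(k)} \to 0$ \emph{strongly} (since $f^{(k)}\to 0$). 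The remainder $\pi^{(k)}_{\rm harm} := \pi^{(k)} - \tilde\pi^{(k)}$ then solves the \emph{unforced} Stokes system locally, and local maximal regularity gives $\|\nabla \pi^{(k)}_{\rm harm}\|_{L^{\zeta_t}_t L^p_x(Q^\iota_{1/2})} \lesssim_p 1$ uniformly in $k$, hence the needed decay of the pressure oscillation at scale $\theta$ for each $k$. Your ``resolution'' of (i) addresses only why $vq\in L^1$, a different issue.

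\textbf{The oscillation step and where $m>3$ enters.} The second step is not ``subtracting off the limiting linear solution's parabolic polynomial.'' One subtracts the \emph{mean} $(v^{(k)})_Q$ from $v^{(k)}$ before normalizing, and this generates a new term $(v^{(k)})_Q \cdot \nabla a^{(k)}/\varepsilon_k$ in the equation for $w^{(k)}$. Because $\varepsilon_k$ is built from $Y_{\rm osc}$ rather than $Y$, the quantity $|(v^{(k)})_Q|\,\|a^{(k)}\|_{L^{5m/3}}/\varepsilon_k$ is merely bounded, not vanishing; passing to the limit produces Stokes with a genuine forcing $\div F$, $F\in L^{5m/3}$. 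This is exactly where $m>3$ is used: only then is $L^{5m/3}$ subcritical and the limiting $U$ H\"older (with exponent up to $1-3/m$). Your Morrey step as written does not distinguish $m>3$ from $m=3$; indeed it holds for $m\geq 3$. The subcriticality enters only here, and the extra forcing also requires an additional piece $\hat\pi^{(k)}$ in the pressure decomposition. (A smaller point: your $Y$ is normalized so that $Y(\theta^k R)\to 0$ would force $v\equiv 0$; in the paper the interior Morrey step yields $Y(r)\lesssim r^\beta Y(1)$ with $\beta\in[-1,0)$, i.e., controlled growth of the $L^\infty$-scaled quantity, and the iteration of the Campanato step relies on feeding in this Morrey bound to keep $r|(v)_{Q_r}|$ small at every scale.)
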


In the Navier-Stokes context, the idea of $\varepsilon$-regularity is that, if a particular local quantity is $O(\varepsilon)$, then the non-linearity is $O(\varepsilon^2)$, and the solution should enjoy the local regularity of solutions to the Stokes equations. This may be regarded as a local perturbation theorem around the Stokes equations. While the quantity in Proposition~\ref{pro:epsilonreg} may look arbitrary to a newcomer, the point is that it controls the local energy, due to the local energy inequality~\eqref{eq:localenergyineq}.

We now introduce some notation. If $(v,q)$ is a suitable weak solution on $Q_R^\iota$, we define
\begin{equation}
	Y^\iota(R,v,q) = R^{\#\xi} \norm{v}_{L^{\xi_t}_t L^{\xi_x}_x(Q_R^\iota)} + R^{\#\zeta+1} \norm{q - [q]_{B_R^\iota}}_{L^{\zeta_t}_t L^{\zeta_x}_x(Q_R^\iota)}
\end{equation}
where $[q]_{B_R^\iota} = \barint_{B_R^\iota} q \, dx$, and
\begin{equation}
	Y_{\osc}^\iota(R,v,q) = R^{\#\xi} \norm{v - (v)_{Q_R^\iota}}_{L^{\xi_t}_t L^{\xi_x}_x(Q_R^\iota)} + R^{\#\zeta+1} \norm{q - [q]_{B_R^\iota}}_{L^{\zeta_t}_t L^{\zeta_x}_x(Q_R^\iota)}
\end{equation}
where $(v)_{Q_R^\iota} = \bariint_{Q_R^\iota} v \, dx \, dt$. Note that $R Y^\iota(R,v,q)$ and $RY^\iota_\osc(R,v,q)$, rather than $Y^{\iota}$ and $Y_\osc^\iota$ themselves, have the critical scaling. The velocity term in $Y^\iota(R,v,q)$ has the same scaling as $L^\infty$. 

 To simplify, if the context is clear, we sometimes write $Y(R)$ instead of $Y(R,v,q)$, and similarly for the quantities $Y_\osc(R)$, $Y^+(R)$, and $Y^+_\osc(R)$, etc. We sometimes also consider the quantities centered at $z_0$ and write $Y(z_0,R,v,q)$, etc.

With this notation, we can state the $\varepsilon$-regularity with critical lower order terms. 

\begin{proposition}[Critical $\varepsilon$--regularity]
\label{pro:epsilonregcritical}
There exist $\varepsilon_{\rm CKN}, \bar{\alpha} > 0$ satisfying the following property. Suppose that $(v,q)$ is a suitable weak solution of the perturbed Navier-Stokes equations~\eqref{eq:perturbednse} on $Q^\iota$ in the sense of Definition~\ref{def.sws} satisfying
\begin{equation}
	\norm{v}_{L^{\xi_t}_t L^{\xi_x}_x(Q^\iota)} + \norm{q}_{L^{\zeta_t}_t L^{\zeta_x}_x(Q^\iota)} + \norm{a}_{\mathbf{M}_3(Q^\iota)} \leq \varepsilon_{\rm CKN}.
\end{equation}
Then, for the interior case, for all $\alpha \in [-1,0)$, we have the \emph{subcritical Morrey estimate}
\begin{equation}
	\sup_{z_0,R} R^{-\alpha} Y(z_0,R,v,q)  \les_\alpha \norm{v}_{L^{\xi_t}_t L^{\xi_x}_x(Q^\iota)} + \norm{q}_{L^{\zeta_t}_t L^{\zeta_x}_x(Q^\iota)},
\end{equation}
where $z_0 = (x_0,t_0)$, $|x_0| < 1/2$, and $R<1/2$ satisfy $Q(z_0,R) \subset Q^\iota$. Particular to the boundary case $\iota = {\rm bd}$, we additionally have that for all $\alpha\in(0,\bar\alpha)$,
\begin{equation}
	\sup_{z_0,R} R^{-\alpha} Y^+(z_0,R,v,q)  \les_\alpha \norm{v}_{L^{\xi_t}_t L^{\xi_x}_x(Q^+)} + \norm{q}_{L^{\zeta_t}_t L^{\zeta_x}_x(Q^+)},
\end{equation}
where $|x_0| < 1/2$, $d(x_0) = 0$, $R < 1/2$, and $Q^+(z_0,R) \subset Q^+$.
\end{proposition}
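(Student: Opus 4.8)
The strategy is to run Lin's compactness method in two stages, exactly as for the subcritical Proposition~\ref{pro:epsilonreg}, being careful about which estimates survive in the critical case $m=3$. First I would set up the first compactness argument for the quantity $Y^\iota(R)$ (and $Y^{+}$ in the boundary case). Assume for contradiction that for some $\alpha \in [-1,0)$ (interior) or $\alpha \in (0,\bar\alpha)$ (boundary) the claimed subcritical Morrey estimate fails. Then there is a sequence of suitable weak solutions $(v^{(k)},q^{(k)})$ with coefficients $a^{(k)}$, all satisfying the smallness hypothesis with $\varepsilon_{\rm CKN}$, for which $R_k^{-\alpha} Y^\iota(z_0^{(k)},R_k,v^{(k)},q^{(k)})$ blows up relative to $\norm{v^{(k)}}_{L^{\xi_t}_t L^{\xi_x}_x(Q^\iota)} + \norm{q^{(k)}}_{L^{\zeta_t}_t L^{\zeta_x}_x(Q^\iota)}$. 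After normalizing the right-hand side to $1$ and extracting a worst ball via a standard maximal-function/iteration argument, one rescales so that the offending cylinder becomes the unit cylinder. The key point is that $\norm{a}_{\mathbf{M}_3}$ is scale-invariant (the $\mathbf{M}_m$-norm is built from scale-critical pieces), so smallness of $a^{(k)}$ is preserved under the rescaling; moreover the local energy inequality~\eqref{eq:localenergyineq} lets one upgrade the $L^{\xi_t}_tL^{\xi_x}_x$ control of $v^{(k)}$ plus the pressure smallness into uniform energy bounds $L^\infty_tL^2_x \cap L^2_tH^1_x$ on the rescaled solutions. Compactness (Aubin--Lions for $v$, weak compactness for $q$ after subtracting its mean) then produces a limit $(v,q)$ which, since $a^{(k)} \to 0$ in the relevant norms, solves the limiting Stokes system~\eqref{eq:limitingstokessystem} with the stated boundary condition in the boundary case.

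The first compactness argument yields, via linear Stokes regularity theory (interior and boundary, the latter being the delicate ingredient from~\cite{MMPEnergy}/\cite{barker2019scale} because the harmonic pressure has poor time integrability), a Campanato/Morrey-type decay for $Y^\iota(\theta R) \les \theta^{?} Y^\iota(R) + (\text{error})$, where the error involves $\norm{a}_{\mathbf{M}_3}$ and is absorbed by smallness; iterating gives the subcritical Morrey bound $\sup_{z_0,R} R^{-\alpha}Y^\iota(z_0,R) \les 1$ for the stated range of $\alpha$. In the interior case this range reaches down to $\alpha = -1$, i.e.\ `just below $L^\infty$', which is exactly what a bounded solution of the limiting homogeneous Stokes system can support. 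In the boundary case the Stokes system near a flat no-slip boundary does not give boundedness when the data are merely critical, so one only gets $\alpha \in (0,\bar\alpha)$; here $\bar\alpha$ is the Hölder exponent coming from boundary regularity of the limiting Stokes problem. This is the same phenomenon flagged in the introduction and in~\cite{barker2018localized,KMT21}, and it is why the critical statement is weaker than the subcritical one: the \emph{second} compactness argument, the one that would improve $Y_{\osc}$ and yield $C^{\bar\alpha}$, is simply not run here — we stop at the subcritical Morrey estimate.

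The main obstacle, and the place where the half-space setting genuinely differs from~\cite{KMT21}, is the pressure in the boundary compactness argument. Because the Stokes pressure in $\R^3_+$ with $L^2$ (or divergence-form) data is only controlled in $L^{4/3-}_{t}L^2_x$ — the obstruction of Koch--Solonnikov~\cite{KS02} and the $\alpha < 1/4$ threshold of~\eqref{e.pressres} — one cannot use the naive $\norm{q}_{L^{3/2}}$-based $\varepsilon$-regularity. This is why the quantities $Y^\iota$ are built on the asymmetric exponents $\zeta = (2-\delta_x, 1+\delta_t)$ with $\#\zeta = -7/2+\delta_0$: the low time integrability $1+\delta_t$ is precisely what the Stokes pressure estimates (Lemma~\ref{lem:localenergyestslemma} and~\cite[Prop.~2.1--2.3]{MMPEnergy}) can supply with room to spare, while the Hölder-conjugate velocity exponent $\xi$ keeps $\#\xi = -3/2-\delta_0$ subcritical-for-the-energy so that $v \in L^{\xi_t}_tL^{\xi_x}_x$ is controlled by $L^\infty_tL^2_x\cap L^2_tH^1_x$. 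Concretely, one must verify (i) that the rescaled pressures, after subtracting their spatial means $[q^{(k)}]_{B^\iota_R}$, converge to a limiting pressure solving the limiting system, which requires the half-space pressure decomposition (Helmholtz part plus harmonic part) and the fractional pressure estimates of~\cite{CK18,barker2019scale}; and (ii) that the limiting Stokes system enjoys the boundary Campanato decay up to exponent $\bar\alpha$. Once these linear facts are in hand, the iteration and contradiction are routine and parallel to the proof of Proposition~\ref{pro:epsilonreg}.
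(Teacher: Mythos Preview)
Your overall strategy---run only the Morrey compactness step of Lin's method and skip the oscillation---is correct and matches the paper. The paper's proof is in fact a one-liner: Lemmas~\ref{lem:interiormorrey}, \ref{lem:boundarymorrey}, and the Morrey conclusions of Lemma~\ref{lem:iteratedestimates} are all stated and proved for $m \geq 3$, so one simply applies Lemma~\ref{lem:iteratedestimates} at each admissible center exactly as in the proof of Proposition~\ref{pro:epsilonreg}, omitting the $Y_{\rm osc}$ parts and Campanato's criterion.

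However, your reading of the boundary case is backwards, and this is a genuine conceptual gap. You claim that ``the Stokes system near a flat no-slip boundary does not give boundedness when the data are merely critical, so one only gets $\alpha \in (0,\bar\alpha)$.'' But $\alpha > 0$ is a \emph{stronger} conclusion than the interior $\alpha \in [-1,0)$: it asserts $Y^+(R) \to 0$ as $R \to 0^+$, i.e., genuine decay rather than controlled growth. The mechanism is the no-slip condition, not a pressure obstruction. In the Morrey compactness argument the limiting equation is the \emph{homogeneous} Stokes system~\eqref{eq:UsatisfiesStokes} (not~\eqref{eq:limitingstokessystem}, which carries the forcing $\div F$ and appears only in the oscillation Lemma~\ref{lem:interiorcampanato} you are rightly skipping); local maximal regularity gives $U \in C^{\alpha'}_{\rm par}$ in both the interior and boundary settings, see~\eqref{e.holderU}. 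The boundary gains a positive exponent precisely because $U|_{\p_{\rm flat}} = 0$ combined with H\"older continuity forces $|U| \les d(x)^{\alpha'}$; see the case distinction immediately after~\eqref{eq:velocityclaim}. The critical-case obstruction you cite from~\cite{barker2018localized,KMT21} concerns only the \emph{oscillation} step (failure of H\"older continuity for $v$ when $m=3$) and is unrelated to the boundary/interior distinction. Finally, no fractional pressure estimates are needed here: the pressure is handled by the decomposition $\pi^{(k)} = \tilde\pi^{(k)} + \pi^{(k)}_{\rm harm}$ in Step~5 of the proof of Lemmas~\ref{lem:interiormorrey}--\ref{lem:boundarymorrey}, using only global and local maximal regularity for Stokes.
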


\subsection*{Summary of the method}
Lin's compactness method is in two steps:

\emph{Step 1. Morrey estimate}, or, \emph{improve the growth of $Y(R)$}.\footnote{Notice that the quantity $Y(R)$ should not actually \emph{decay} as $R \to 0^+$ unless the solution vanishes.} The goal of this step is to demonstrate that, for all $\alpha \in [-1,0)$, smallness of $Y(1)$ and the coefficient $a$ implies
\begin{equation}
    \label{eq:themorreyest}
Y(R) \les_\alpha R^{\alpha} Y(1) \text{ for all } R \leq 1.
\end{equation}
Notice that the smaller $\alpha$ is, the closer $v$ is to boundedness. We do not need to prove this for all $R \leq 1$ at once. Rather, we can show an improvement over a single scale, from $R=1$ to $R = \theta_0$, see Lemma~\ref{lem:interiormorrey}. Afterward, one can iterate the one-scale improvement to achieve the improvement for all scales, see Lemma~\ref{lem:iteratedestimates}.

To identify the good scale $\theta_0$, Lin employed a compactness/contradiction argument: If we have a sequence of solutions with $Y(1,v^{(k)},q^{(k)}) \leq \varepsilon_k \to 0^+$ (which also violate the conclusion), then the non-linearity is $O(\varepsilon_k^2)$. Since the coefficients $a^{(k)}$ are $O(\varepsilon_k)$, the lower order terms are also $O(\varepsilon_k^2)$. The normalized solutions $w^{(k)} = v^{(k)}/\varepsilon_k$ solve a Navier--Stokes-type equation whose non-linearity and lower-order terms are $O(\varepsilon_k)$. Hence, the limiting normalized solution $(U,P)$ satisfies the Stokes equations. The better regularity of $(U,P)$ is used to identify a good scale and yield the contradiction.

\emph{Step 2. Campanato estimate}, or, \emph{improve the decay of $Y_{\rm osc}(R)$}. The goal of this step is to demonstrate that, for all $0 < \alpha \ll 1$, smallness of $Y(1)$ and the coefficient $a$ implies
\begin{equation}
    \label{eq:thecampanatoest}
Y_{\rm osc}(R) \les_\alpha R^{\alpha} Y_{\rm osc}(1) \text{ for all } R \leq 1,
\end{equation}
since this estimate at every point is \emph{equivalent} to H{\"o}lder continuity.

Again, it is enough to show the improvement over a single scale, see Lemma~\ref{lem:interiorcampanato}. However, notice that smallness of $Y(1)$, rather than $Y_{\rm osc}(1)$, is required. The reason is the following. If the mean $(v)_Q$ is very large, then we expect that the drift carries information into the domain very quickly, which makes it more difficult to localize the solution. Therefore, to iterate the oscillation lemma, we require the scale-invariant Morrey estimate on $Y(R)$, that is,~\eqref{eq:themorreyest} with $\alpha = -1$, see the proof of  Lemma~\ref{lem:iteratedestimates}. This is why Lin's method contains two separate compactness arguments.

In the contradiction argument for the oscillation, one analyzes normalized solutions $v^{(k)}$ after subtracting off the mean. This introduces a significant new term $(v^{(k)})_Q \cdot \nabla a^{(k)} / \varepsilon_k$, which may not converge to zero but rather contributes a forcing term $\div F$ to the Stokes system for the limiting normalized solution $(U,P)$. When $a^{(k)} \in L^{5m/3}$ is subcritical ($m > 3$), the solution $(U,P)$ is H{\"o}lder continuous, and we can conclude. When $m=3$, this argument fails, so we stop at Step~1.\footnote{One can construct unbounded solutions to the heat equation in three dimensions with forcing $\div f$, where $f \in L^5$, and which belong to all $L^p$, $p < +\infty$. The Stokes equations are likely no better.}

The above two steps are interior estimates. In the boundary setting, there is a third step.

\emph{Step 3. Boundary Morrey estimate}, see Lemma~\ref{lem:boundarymorrey}. Due to the no-slip conditions, the quantity $Y^+(R)$ can decay as $R \to 0^+$, and one simply uses $Y^+(R)$ to control $Y^+_{\rm osc}(R)$.

\bigskip

Let $\alpha_0 = 2-2/\zeta_t$.
\begin{lemma}[Interior Morrey estimate]
\label{lem:interiormorrey}
Let $m\geq 3$. Let $(v,q)$ be a suitable weak solution of~\eqref{eq:perturbednse} on $Q$ in the sense of Definition~\ref{def.sws}.
For all $\alpha \in [-1,0)$, there exist constants $\varepsilon_0, \theta_0 \in (0,1)$ satisfying the following property.
If \begin{equation}
	Y(1) + \norm{a}_{\mathbf{A}_m(Q)} \leq \varepsilon_0,
\end{equation}
then
\begin{equation}
	Y(\theta_0) \leq \theta_0^\alpha Y(1).
\end{equation}
\end{lemma}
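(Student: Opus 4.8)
The plan is to run Lin's compactness/contradiction argument, producing the good scale $\theta_0$ from the superior regularity of a limiting Stokes solution. Suppose the conclusion fails: then for every $\theta \in (0,1)$ (in particular, for a sequence $\theta_j \to 0$ that we will fix at the end) and every $\varepsilon > 0$, there exist suitable weak solutions $(v^{(k)},q^{(k)})$ of \eqref{eq:perturbednse} on $Q$ with coefficients $a^{(k)}$ such that $\varepsilon_k := Y(1,v^{(k)},q^{(k)}) + \norm{a^{(k)}}_{\mathbf{A}_m(Q)} \to 0^+$ but $Y(\theta_0,v^{(k)},q^{(k)}) > \theta_0^\alpha Y(1,v^{(k)},q^{(k)})$ for the candidate scale. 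Normalize $w^{(k)} := v^{(k)}/\varepsilon_k$ and $r^{(k)} := (q^{(k)} - [q^{(k)}]_{B_1})/\varepsilon_k$, so that $Y(1,w^{(k)},r^{(k)}) \leq 1$; in particular $w^{(k)}$ is bounded in $L^{\xi_t}_t L^{\xi_x}_x(Q)$ and $r^{(k)}$ is bounded in $L^{\zeta_t}_t L^{\zeta_x}_x(Q)$. First I would extract, via the local energy inequality \eqref{eq:localenergyineq}, uniform bounds for $w^{(k)}$ in $L^\infty_t L^2_x \cap L^2_t H^1_x$ on a slightly smaller cylinder $Q_{3/4}$ — this is the step the remark after Proposition~\ref{pro:epsilonreg} is flagging, that the quantity in the hypothesis controls the local energy. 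Here one uses that the coefficient terms in \eqref{eq:localenergyineq}, involving $a^{(k)}$ (which is $O(\varepsilon_k)$ in $\mathbf{A}_m$) paired against $w^{(k)}$ and $\nabla w^{(k)}$, are absorbable, using the H\"older-conjugate structure of the exponents $(\xi,\zeta)$ versus $\mathbf{A}_m$ built into \eqref{eq:zetadef} and \eqref{e.norma}, with the energy norm of $w^{(k)}$ appearing with a small or absorbable constant.

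Next I would pass to the limit. Using the energy bounds plus the equation \eqref{eq:perturbednse} to get a bound on $\p_t w^{(k)}$ in a negative-order space, Aubin–Lions gives strong $L^2_{t,x}(Q_{3/4})$ (indeed $L^3_{t,x}$ by interpolation) convergence of a subsequence of $w^{(k)}$ to some $U$; $r^{(k)} \wto P$ weakly in $L^{\zeta_t}_t L^{\zeta_x}_x$. In the equation, the genuine nonlinearity $(v^{(k)}+a^{(k)})\cdot \nabla v^{(k)} = \varepsilon_k (w^{(k)}+a^{(k)}/\varepsilon_k)\cdot\varepsilon_k \nabla w^{(k)}$ — wait, more carefully: dividing \eqref{eq:perturbednse} by $\varepsilon_k$, the term $(v^{(k)}+a^{(k)})\cdot\nabla v^{(k)}/\varepsilon_k = \varepsilon_k (w^{(k)} + a^{(k)}/\varepsilon_k)\cdot\nabla w^{(k)}$, and since $\norm{a^{(k)}/\varepsilon_k}_{\mathbf{A}_m} \leq 1$ while the prefactor $\varepsilon_k \to 0$, this term vanishes in the distributional limit (the products converge by the strong convergence of $w^{(k)}$ and weak convergence of $\nabla w^{(k)}$, times the $\varepsilon_k$ prefactor); likewise $\div(a^{(k)}\otimes v^{(k)})/\varepsilon_k = \varepsilon_k \div((a^{(k)}/\varepsilon_k)\otimes w^{(k)}) \to 0$. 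Hence $(U,P)$ solves the homogeneous Stokes system $\p_t U - \Delta U + \nabla P = 0$, $\div U = 0$ on $Q_{3/4}$ (with the flat no-slip condition in the boundary case). By interior (resp. boundary, using the flat no-slip condition) regularity for the Stokes system — here is where the strong nonlocal pressure effects are tamed because the limiting pressure is harmonic off the source, i.e. genuinely the Stokes pressure — $U$ is smooth (bounded with bounded derivatives, H\"older in the parabolic metric) on $Q_{1/2}$, so there is $C_* = C_*(\alpha)$ with $Y(\theta, U, P) \leq C_* \theta\, \norm{\nabla U}_{L^\infty(Q_{1/2})} \cdot \theta^{?}$... concretely, the mean-zero-ness built into $Y$ gives decay: $Y(\theta,U,P) \lesssim \theta^{1+\#\xi \text{ correction}} \lesssim C_* \theta$ for the velocity part, and the Stokes pressure part decays at the same polynomial rate, so $Y(\theta,U,P) \leq C_* \theta$ for $\theta \leq 1/2$.

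Now I choose $\theta_0$: since $\alpha < 0$, pick $\theta_0 \in (0,1/2]$ so that $C_* \theta_0 \leq \tfrac12 \theta_0^\alpha$. By lower semicontinuity of $Y$ under the established modes of convergence (strong $L^{\xi}$ for $v$, weak for $q$ — the mean-subtraction is continuous), $Y(\theta_0, U, P) \leq \liminf_k Y(\theta_0, w^{(k)}, r^{(k)})$, so for $k$ large $Y(\theta_0, w^{(k)}, r^{(k)}) \geq \tfrac12\theta_0^\alpha - o(1)$... but this is in the wrong direction, so instead I use the upper estimate: $Y(\theta_0, w^{(k)}, r^{(k)}) \leq Y(\theta_0,U,P) + o(1) \leq C_* \theta_0 + o(1) \leq \tfrac34\theta_0^\alpha$ for $k$ large (using, for the velocity term, strong convergence to upgrade $\limsup Y(\theta_0,w^{(k)}) \le Y(\theta_0,U)$; for the pressure term, a small correction: one shows $r^{(k)} \to P$ strongly enough on $Q_{\theta_0}$, or absorbs the pressure discrepancy into the nonlinearity bound using the equation — this is the one technical point requiring care). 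Rescaling back, $Y(\theta_0,v^{(k)},q^{(k)}) = \varepsilon_k' Y(\theta_0,w^{(k)},r^{(k)})$ where $\varepsilon_k' = Y(1,v^{(k)},q^{(k)}) \leq \varepsilon_k$... here I should normalize only by $Y(1,v^{(k)},q^{(k)})$, not by $\varepsilon_k$, and treat $\norm{a^{(k)}}_{\mathbf{A}_m}/Y(1,v^{(k)},q^{(k)}) \to 0$ separately (if $Y(1,v^{(k)},q^{(k)})$ does not go to zero faster, split cases; the standard trick is to normalize by $\max$). Either way one reaches $Y(\theta_0,v^{(k)},q^{(k)}) \leq \tfrac34 \theta_0^\alpha Y(1,v^{(k)},q^{(k)}) < \theta_0^\alpha Y(1,v^{(k)},q^{(k)})$, contradicting the assumed failure.

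The main obstacle I anticipate is the pressure. Because $q$ only lives in the low-time-integrability space $L^{\zeta_t}_t L^{\zeta_x}_x$ with $\zeta_t = 1+\delta_t$ close to $1$, one cannot blithely treat $q$ as slaved to $u\otimes u$ as in the whole space, and the compactness of $r^{(k)}$ is only weak; getting the pressure part of $Y(\theta_0,\cdot)$ to pass to the limit correctly — and verifying that the limiting $P$ is exactly the Stokes pressure (no spurious harmonic-in-time contribution, cf. the Koch–Solonnikov obstruction mentioned in the introduction) — is the delicate point. The resolution is that the normalization kills the nonlinear source, so the limit system is the \emph{homogeneous} Stokes system, whose pressure is controlled by standard theory; and the mean-subtraction $q - [q]_{B^\iota_R}$ in $Y$ together with local pressure estimates for the Stokes system (of the type in Lemma~\ref{lem:localenergyestslemma} / \cite{MMPEnergy}) gives the needed polynomial decay. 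The second obstacle, minor by comparison, is bookkeeping the absorption constants in the local energy inequality so that the energy bound on $w^{(k)}$ is genuinely uniform; this is where the specific choice of exponents in \eqref{eq:zetadef} and the three-part norm \eqref{e.norma} (with the $\nabla a$ term) is used.
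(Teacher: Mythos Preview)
Your overall architecture is correct and matches the paper's: Lin's compactness/contradiction, normalization by $\varepsilon_k$, local energy bounds on $w^{(k)}$ from the local energy inequality, Aubin--Lions compactness, limiting homogeneous Stokes system $(U,P)$, and choice of $\theta_0$ from Stokes regularity. Two points, one minor and one substantive, separate your sketch from a complete proof.

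\textbf{Minor.} The normalization should be by $\varepsilon_k := Y(1,v^{(k)},q^{(k)})$ alone, with $\norm{a^{(k)}}_{\mathbf{A}_m(Q)} \to 0$ assumed separately; the paper does this and it avoids the case-splitting you worry about. Also, your velocity decay claim $Y(\theta,U,P)\leq C_*\theta$ is too optimistic: from $\norm{U}_{L^\infty(Q_{1/2})}\lesssim 1$ one only gets $\theta^{\#\xi}\norm{U}_{L^{\xi_t}_tL^{\xi_x}_x(Q_\theta)}\lesssim 1$, not $O(\theta)$. Fortunately this is enough, since $\alpha<0$ makes $\theta_0^\alpha$ large.

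\textbf{Substantive (the pressure).} You correctly flag that getting $\limsup_k$ of the pressure part of $Y(\theta_0,w^{(k)},\pi^{(k)})$ under control is the crux, but your two proposed fixes do not work: weak convergence of $\pi^{(k)}$ does not upgrade to strong convergence on $Q_{\theta_0}$ without further input, and ``absorbing the discrepancy via the equation'' is not a mechanism. The paper's resolution is a concrete \emph{pressure decomposition}. One collects the nonlinear and lower-order terms into $f^{(k)}$ and shows $f^{(k)}\to 0$ strongly in $L^{\zeta_t}_tL^{\ell_x}_x(Q_{3/4})$. Then one solves the \emph{global} Stokes problem on $\R^3$ (or $\R^3_+$) with right-hand side $-\chi f^{(k)}$ to produce $(\tilde w^{(k)},\tilde\pi^{(k)})$; global maximal regularity forces $\tilde\pi^{(k)}\to 0$ strongly. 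The remainder $\pi^{(k)}_{\rm harm}:=\pi^{(k)}-\tilde\pi^{(k)}$ (mean-adjusted) then solves the \emph{homogeneous} Stokes system on $Q_{5/8}$, so local maximal regularity gives uniform bounds on $\nabla\pi^{(k)}_{\rm harm}$ in $L^{\zeta_t}_tL^p_x(Q_{1/2})$ for all $p<\infty$, and Poincar\'e yields $\theta^{\#\zeta+1}\norm{\pi^{(k)}_{\rm harm}-[\pi^{(k)}_{\rm harm}]_{B_\theta}}_{L^{\zeta_t}_tL^{\zeta_x}_x(Q_\theta)}\lesssim \theta^{\alpha_0-3/p}$, uniformly in $k$. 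Combining, one obtains $\limsup_k \theta^{\#\zeta+1}\norm{\pi^{(k)}-[\pi^{(k)}]_{B_\theta}}\leq \theta^\alpha/4$ directly, \emph{without} ever needing $\pi^{(k)}\to P$ strongly. This decomposition is the missing idea in your sketch; once inserted, your argument closes exactly as you outline.
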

The above lemma is typically iterated with $\alpha = -1$, which corresponds to producing a critical bound at small scales.

\begin{lemma}[Boundary Morrey estimate]
\label{lem:boundarymorrey}
Let $m\geq 3$. Let $(v,q)$ be a suitable weak solution of~\eqref{eq:perturbednse} on $Q^+$ in the sense of Definition~\ref{def.sws}.
For all $\alpha \in (0,\alpha_0)$, there exist constants $\varepsilon_0^+, \theta_0^+ \in (0,1)$ satisfying the following property. If
\begin{equation}
	Y^+(1) + \norm{a}_{\mathbf{A}_m(Q^+)} \leq \varepsilon_0^+,
\end{equation}
then
\begin{equation}
	Y^+(\theta_0^+) \leq (\theta_0^+)^{\alpha} Y^+(1).
\end{equation}
\end{lemma}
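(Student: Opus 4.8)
The plan is to run the compactness/contradiction scheme described in the "Summary of the method" above, adapted to the boundary cylinder $Q^+$ and the quantity $Y^+$. Suppose the statement fails for some fixed $\alpha \in (0,\alpha_0)$. Then for every $k$ there is a suitable weak solution $(v^{(k)},q^{(k)})$ of \eqref{eq:perturbednse} on $Q^+$ with coefficient $a^{(k)}$ such that $\varepsilon_k := Y^+(1,v^{(k)},q^{(k)}) + \|a^{(k)}\|_{\mathbf{A}_m(Q^+)} \to 0^+$, yet $Y^+(\theta,v^{(k)},q^{(k)}) > \theta^\alpha Y^+(1,v^{(k)},q^{(k)})$ for every candidate scale $\theta$; the scale $\theta_0^+$ will be chosen at the end from the limiting problem, so really one fixes a $\theta$ depending on $\alpha$ first, derives a contradiction, and then sets $\theta_0^+ = \theta$. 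Normalize $w^{(k)} = v^{(k)}/Y^+(1,v^{(k)},q^{(k)})$ and $r^{(k)} = (q^{(k)} - [q^{(k)}]_{B^+})/Y^+(1,v^{(k)},q^{(k)})$, so that $Y^+(1,w^{(k)},r^{(k)}) = 1$, while the normalized coefficient $\tilde a^{(k)} = a^{(k)}/Y^+(1,\cdot)$ has $\mathbf{A}_m$-norm $\to 0$ (here one must be careful: if $Y^+(1)$ were much smaller than $\|a^{(k)}\|_{\mathbf{A}_m}$ the normalization could blow up the coefficient, so one argues on the ratio and uses that $\|a^{(k)}\|_{\mathbf{A}_m} \le \varepsilon_k$ forces the relevant products to vanish — this is the standard bookkeeping). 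The first task is then to show, using the local energy inequality \eqref{eq:localenergyineq} with an appropriate cutoff supported in $Q^+$ and vanishing near $\partial_{\rm flat}$ only in the normal-derivative sense (not in value, since $w^{(k)}$ already vanishes on $\partial_{\rm flat}$), together with the $Y^+(1)=1$ normalization and Hölder's inequality pairing $\xi$ with $\zeta$, that $w^{(k)}$ is bounded in $L^\infty_t L^2_x \cap L^2_t H^1_x$ on a slightly smaller half-cylinder; the nonlinear and lower-order contributions are $O(\varepsilon_k)$ and drop out.

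Next I would extract a weak limit $(U,P)$ along a subsequence: $w^{(k)} \rightharpoonup U$ weakly in $L^2_t H^1_x$ and, by Aubin--Lions together with the equation giving a uniform bound on $\partial_t w^{(k)}$ in a negative-order space, strongly in $L^3_{t,x}$ (or in $L^{\xi_t}_t L^{\xi_x}_x$) on the smaller half-cylinder; the pressures $r^{(k)}$ are bounded in $L^{\zeta_t}_t L^{\zeta_x}_x$ and converge weakly to $P$. Passing to the limit in \eqref{eq:perturbednse}: the quadratic term $(w^{(k)}+\tilde a^{(k)})\cdot\nabla w^{(k)} \cdot Y^+(1)$ and $\div(\tilde a^{(k)}\otimes w^{(k)})\cdot Y^+(1)$ are $O(\varepsilon_k)$ and vanish, so $(U,P)$ solves the homogeneous Stokes system $\partial_t U - \Delta U + \nabla P = 0$, $\div U = 0$ in the half-cylinder, with the no-slip condition $U = 0$ on $\partial_{\rm flat}$ inherited from $w^{(k)}|_{\partial_{\rm flat}} = 0$ (this inheritance uses the trace continuity built into Definition~\ref{def.sws}). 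Crucially, $Y^+(1,U,P) \le \liminf Y^+(1,w^{(k)},r^{(k)}) = 1$ by lower semicontinuity, but also $Y^+(1,U,P)$ cannot be too small — here one uses that $Y^+_{\rm osc}$ controls things and that strong convergence of $w^{(k)}$ transfers a lower bound; in fact for the contradiction we only need the \emph{upper} bound $Y^+(1,U,P)\lesssim 1$.

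Then comes the decay input: by interior/boundary regularity for the homogeneous Stokes system with no-slip conditions on a flat boundary — which is classical (e.g. Solonnikov-type estimates, or the analyticity of the Stokes semigroup) — the limit $U$ vanishes on $\partial_{\rm flat}$ and is smooth up to the flat boundary, so $|U(x,t)| \lesssim d(x) \lesssim r$ on $Q_r^+$, and $P - [P]_{B_r^+}$ enjoys the corresponding improvement; this yields $Y^+(r,U,P) \lesssim C_* r^{\alpha_0} Y^+(1,U,P)$ for the \emph{extremal} exponent $\alpha_0 = 2 - 2/\zeta_t$ (the velocity contributes one power of $r$ from $U = O(d)$ and the pressure, after subtracting its mean, an analogous gain; the exponent $\alpha_0$ rather than a full power reflects the $L^{\zeta_t}_t$ integration — this matching is where one must check the scaling exponent $\#\zeta$ against the boundary decay rate of the Stokes pressure, consistent with item (4) in the introduction and \eqref{e.pressres}). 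Fix $\theta = \theta_0^+$ so small that $C_* \theta^{\alpha_0} \le \tfrac12 \theta^\alpha$ (possible since $\alpha < \alpha_0$). Then $Y^+(\theta,U,P) \le \tfrac12 \theta^\alpha$. Finally, by the strong convergence $w^{(k)} \to U$ and weak convergence $r^{(k)} \rightharpoonup P$ (the pressure term in $Y^+$ is a norm, hence weakly lower semicontinuous, so it only helps in one direction; one needs the velocity part, which converges strongly, to carry the estimate — this is precisely why the $L^\infty$-scaled velocity term sits in $Y^+$), we get $Y^+(\theta,w^{(k)},r^{(k)}) \le \tfrac34 \theta^\alpha$ for $k$ large, while $Y^+(1,w^{(k)},r^{(k)}) = 1$; this contradicts $Y^+(\theta,v^{(k)},q^{(k)}) > \theta^\alpha Y^+(1,v^{(k)},q^{(k)})$.

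The main obstacle I anticipate is the passage to the limit in the pressure and the verification that $(U,P)$ is a genuine solution of the homogeneous Stokes system \emph{with the correct boundary condition and with a pressure that decays after subtracting its mean at the rate $\alpha_0$}. In the half space the Stokes pressure does not behave like $u\otimes u$ (item (4) and the Koch--Solonnikov example in item (3)), so one cannot simply absorb $P$ into the velocity estimates; one must either invoke the resolvent/fractional pressure estimates of \cite{MMPEnergy,CK18,barker2019scale} to get enough compactness for $q^{(k)}$ uniformly, or work with the pressure only through the combination $q v \cdot \nabla\Phi$ in the local energy inequality, keeping $\|q - [q]_{B^+}\|_{L^{\zeta_t}_t L^{\zeta_x}_x}$ as the sole pressure quantity throughout. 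Getting the exponent $\alpha_0 = 2 - 2/\zeta_t$ exactly — and not something worse — requires the boundary decay estimate for the \emph{homogeneous} Stokes system (no forcing, since the nonlinearity has vanished), for which $P$ is as regular as $U$ up to the flat boundary, so the threshold $1/4$ phenomenon of \eqref{e.pressres} does not intrude; still, one must track the time-integrability carefully, which is the delicate computational point.
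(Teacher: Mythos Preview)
Your overall contradiction/compactness scheme is the right one and matches the paper. The gap is in the pressure step, and your last paragraph shows you already sense it: weak convergence $r^{(k)} \rightharpoonup P$ only gives $\|P-[P]_{B^+_\theta}\| \le \liminf \|r^{(k)}-[r^{(k)}]_{B^+_\theta}\|$, which is the wrong direction. Since $Y^+$ is a \emph{sum} of a velocity part and a pressure part, strong convergence of the velocity alone cannot ``carry the estimate'': you still need $\limsup_k \theta^{\#\zeta+1}\|r^{(k)}-[r^{(k)}]_{B^+_\theta}\|_{L^{\zeta_t}_t L^{\zeta_x}_x} \le \tfrac12\theta^\alpha$, and nothing in your argument provides this. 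Your proposed fixes (fractional pressure estimates, or handling the pressure only through the local energy inequality) do not address this, because the quantity to be bounded is literally the pressure norm inside $Y^+$.

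The paper's resolution is a \emph{pressure decomposition} rather than compactness on $r^{(k)}$ itself. Write the forcing in the equation for $w^{(k)}$ as $f^{(k)}$ (the normalized nonlinear and lower-order terms) and show $f^{(k)} \to 0$ in $L^{\zeta_t}_t L^{\ell_x}_x$ on $Q^+_{3/4}$. Then let $(\tilde w^{(k)},\tilde\pi^{(k)})$ solve the \emph{global} Stokes problem on $\R^3_+ \times (-1,0)$ with right-hand side $-\chi f^{(k)}$ and zero initial data; global maximal regularity forces $\tilde\pi^{(k)} \to 0$ strongly in $L^{\zeta_t}_t L^{\zeta_x}_x$ on every $Q^+_\theta$. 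The remainder $\pi^{(k)}_{\rm harm} := r^{(k)} - \tilde\pi^{(k)}$ (modulo means) then solves the \emph{homogeneous} Stokes system on $Q^+_{5/8}$ with no-slip, and local maximal regularity gives a uniform bound $\|\nabla \pi^{(k)}_{\rm harm}\|_{L^{\zeta_t}_t L^p_x(Q^+_{1/2})} \les_p 1$ for all finite $p$; Poincar\'e then yields $\theta^{\#\zeta+1}\|\pi^{(k)}_{\rm harm} - [\pi^{(k)}_{\rm harm}]_{B^+_\theta}\| \les \theta^{\alpha_0 - 3/p}$, which is $\le \tfrac14 \theta^\alpha$ for $\theta$ small since $\alpha < \alpha_0$. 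Combining the strong vanishing of $\tilde\pi^{(k)}$ with this uniform decay of $\pi^{(k)}_{\rm harm}$ gives the missing $\limsup$ bound and closes the contradiction.

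One smaller correction: the heuristic $|U(x,t)| \lesssim d(x)$ is too strong. The bottleneck for $\alpha_0 = 2 - 2/\zeta_t$ is not spatial boundary regularity but the $L^{\zeta_t}_t$ time integrability: local maximal regularity for the limiting homogeneous Stokes system yields $\partial_t U,\nabla^2 U,\nabla P \in L^{\zeta_t}_t L^p_x$, and parabolic Sobolev embedding then gives $U \in C^{\alpha'}_{\rm par}$ only for $\alpha' < \alpha_0$. Combined with $U|_{\partial_{\rm flat}} = 0$, this is exactly what you need for the velocity part.
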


Notice that in the above lemma $\alpha$ is positive, contrary to Lemma~\ref{lem:interiormorrey}. Let us stress once again that this fact relies on the no-slip boundary condition for $v$.

Let $\alpha_1 = \min(\alpha_0,1-3/m)$.
\begin{lemma}[Interior Campanato estimate]
\label{lem:interiorcampanato}
Let $m > 3$ and $(v,q)$ be a suitable weak solution of~\eqref{eq:perturbednse} on $Q$ in the sense of Definition~\ref{def.sws}.
For all $\alpha \in (0,\alpha_1)$, there exist constants $\varepsilon_1, \theta_1 \in (0,1)$ satisfying the following property.
If
\begin{equation}
	 \lvert (v)_{Q} \rvert + Y_\osc(1) + \norm{a}_{L^{5m/3}(Q)} \leq \varepsilon_1,
\end{equation} then
\begin{equation}
	Y_\osc(\theta_1) \leq \theta_1^{\alpha} \left( Y_\osc(1) +  \lvert (v)_{Q} \rvert \norm{a}_{L^{5m/3}(Q)} \right).
\end{equation}
\end{lemma}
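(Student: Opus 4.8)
\medskip

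The plan is to argue by contradiction and compactness, exactly in the spirit of Step~2 in the ``Summary of the method'' above. Fix $\alpha \in (0,\alpha_1)$ and choose $\theta_1 \in (0,1)$ small, to be determined by the regularity theory of the limiting Stokes system (the precise smallness is dictated by the decay exponent of Campanato-type estimates for Stokes, which gives decay $\theta^{1+\alpha_0}$ for the mean-free velocity and $\theta^{\alpha_0}$ for the pressure oscillation; since $\alpha < \alpha_1 \leq \alpha_0$ there is room). Suppose the conclusion fails: there is a sequence $(v^{(k)}, q^{(k)})$ of suitable weak solutions of~\eqref{eq:perturbednse} on $Q$ with coefficients $a^{(k)}$ such that $\varepsilon_k := |(v^{(k)})_Q| + Y_{\osc}(1,v^{(k)},q^{(k)}) + \|a^{(k)}\|_{L^{5m/3}(Q)} \to 0^+$, yet
\begin{equation}
Y_{\osc}(\theta_1, v^{(k)}, q^{(k)}) > \theta_1^{\alpha}\left( Y_{\osc}(1,v^{(k)},q^{(k)}) + |(v^{(k)})_Q| \, \|a^{(k)}\|_{L^{5m/3}(Q)} \right).
\end{equation}
Write $b_k := (v^{(k)})_Q$ and $A_k := \|a^{(k)}\|_{L^{5m/3}(Q)}$, and set $\mu_k := Y_{\osc}(1,v^{(k)},q^{(k)}) + |b_k| A_k$. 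The normalized quantities are $w^{(k)} := (v^{(k)} - b_k)/\mu_k$ and $\pi^{(k)} := (q^{(k)} - [q^{(k)}]_{B_1})/\mu_k$, which satisfy $Y_{\osc}(1, w^{(k)}, \pi^{(k)}) \leq 1$ (after relabeling the contribution $Y_{\osc}(1,v^{(k)},q^{(k)})/\mu_k \leq 1$), while $Y_{\osc}(\theta_1, w^{(k)}, \pi^{(k)}) > \theta_1^{\alpha}$.

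\medskip

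Next I would pass to the limit. From $Y_{\osc}(1,w^{(k)},\pi^{(k)}) \leq 1$ together with the local energy inequality~\eqref{eq:localenergyineq} for $v^{(k)}$ — which, after subtracting the mean $b_k$ and dividing by $\mu_k$, controls $\|w^{(k)}\|_{L^\infty_t L^2_x \cap L^2_t H^1_x(Q_{3/4})}$ uniformly (here one uses $|b_k| \lesssim \varepsilon_k \to 0$ and $|b_k| A_k \le \mu_k$ to absorb the drift and lower-order terms uniformly) — one extracts a weakly convergent subsequence $w^{(k)} \wto U$ and, via Aubin--Lions, strongly in $L^3_{t,x,\loc}$, and $\pi^{(k)} \wto P$. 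The equation for $w^{(k)}$ reads schematically
\begin{equation}
\p_t w^{(k)} - \Delta w^{(k)} + \mu_k (w^{(k)} + a^{(k)}/\mu_k) \cdot \nabla w^{(k)} + \mu_k \div(a^{(k)} \otimes w^{(k)})/\mu_k + \frac{b_k}{\mu_k}\cdot\nabla a^{(k)} + \nabla \pi^{(k)} = 0,
\end{equation}
so the genuinely nonlinear term $\mu_k w^{(k)}\cdot\nabla w^{(k)} \to 0$, and the mixed terms $a^{(k)}\cdot\nabla w^{(k)}$, $\div(a^{(k)}\otimes w^{(k)})$ vanish since $a^{(k)}\to 0$ in $L^{5m/3}$ (and $w^{(k)}$ is bounded in the dual-type space). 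The one term that need \emph{not} vanish is $\tfrac{b_k}{\mu_k}\cdot\nabla a^{(k)}$: since $|b_k|/\mu_k$ can be of order $1/A_k$ and $\nabla a^{(k)}$ may be large, but the product $\tfrac{b_k}{\mu_k} a^{(k)}$ has norm $\lesssim |b_k| A_k/\mu_k \leq 1$ in $L^{5m/3}$, it converges weakly to some $-\div F$ with $F \in L^{5m/3}_{t,x}$. Hence the limit $(U,P)$ solves the Stokes system $\p_t U - \Delta U + \nabla P = \div F$, $\div U = 0$ on $Q_{1/2}$ with $F \in L^{5m/3}$, $5m/3 > 5$; by linear parabolic regularity (Stokes estimates with subcritical forcing), $U$ is H\"older continuous with the Campanato decay $Y_{\osc}(\theta_1, U, P) \lesssim \theta_1^{1+\alpha_0'} Y_{\osc}(1/2,U,P)$ for some $\alpha_0' > \alpha$ depending on how far $5m/3$ exceeds $5$ — this is precisely where $\alpha < 1 - 3/m$ enters. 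Choosing $\theta_1$ so that the implied constant times $\theta_1^{1+\alpha_0'}$ beats $\tfrac12\theta_1^{\alpha}$, and using lower semicontinuity of $Y_{\osc}$ under the convergence, gives $Y_{\osc}(\theta_1, U, P) \leq \tfrac12 \theta_1^{\alpha}$, while strong convergence forces $Y_{\osc}(\theta_1, w^{(k)},\pi^{(k)}) \to Y_{\osc}(\theta_1, U, P)$ — wait, only the velocity part converges strongly; the pressure part needs care, see below — contradicting $Y_{\osc}(\theta_1, w^{(k)},\pi^{(k)}) > \theta_1^{\alpha}$ for large $k$.

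\medskip

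\textbf{The main obstacle} is the pressure. Unlike the velocity, $\pi^{(k)}$ does not converge strongly, and in the half-space setting the pressure is genuinely nonlocal with poor time integrability (this is the recurring theme of the paper, cf. the discussion around~\eqref{e.pressres}). To handle the $R^{\#\zeta+1}\|\pi^{(k)} - [\pi^{(k)}]_{B_{\theta_1}}\|_{L^{\zeta_t}_t L^{\zeta_x}_x(Q_{\theta_1})}$ term in $Y_{\osc}(\theta_1)$ I would not rely on compactness of $\pi^{(k)}$ directly; instead, decompose the pressure on $Q_{3/4}$ (or the relevant sub-ball) into a part solving the Stokes pressure equation with the same data and a harmonic-in-space corrector, estimate the first part by the velocity and $F$ (which do converge well), and exploit interior estimates for the harmonic corrector to get the needed decay $\theta_1^{\alpha_0}$ in the oscillation. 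This is the analogue of the pressure decomposition used in Caffarelli--Kohn--Nirenberg-type arguments, adapted to the exponents $\zeta$; one must be careful that the low time-integrability $\zeta_t = 1+\delta_t$ is compatible with the maximal-regularity estimates available, which is exactly why the exponents in~\eqref{eq:zetadef} were chosen with room to spare. The velocity estimates, the vanishing of the genuinely quadratic and cross terms, and the identification of the limiting Stokes system are comparatively routine once the functional-analytic setup is in place; the delicate point is propagating control of the pressure oscillation through the limit and verifying that the resulting decay exponent exceeds $\alpha$ for every $\alpha < \alpha_1$.
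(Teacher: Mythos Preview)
Your plan is correct and follows essentially the same compactness/contradiction route as the paper: normalize by $\mu_k = Y_{\osc}(1) + |(v^{(k)})_Q|\,\|a^{(k)}\|_{L^{5m/3}}$, obtain uniform energy bounds from the local energy inequality, pass to a limit solving Stokes with forcing $\div F$ where $F$ is the weak $L^{5m/3}$-limit of $a^{(k)}\otimes (v^{(k)})_Q/\mu_k$, and use the subcritical H\"older regularity of $U$ to derive a contradiction. Two small refinements the paper makes explicit: (i) your equation for $w^{(k)}$ omits the additional drift term $(v^{(k)})_Q\cdot\nabla w^{(k)}$, which is harmless in the limit since $(v^{(k)})_Q\to 0$ but must be carried through the energy estimates; (ii) for the pressure, the paper uses a \emph{three}-way split $\pi^{(k)} = \tilde\pi^{(k)} + \hat\pi^{(k)} + \pi^{(k)}_{\rm harm}$, where $\tilde\pi^{(k)}$ absorbs the strongly vanishing terms, $\hat\pi^{(k)}$ is the whole-space Stokes pressure for $\div(\chi F_k)$ (controlled in $L^{5m/3}$ via Calder\'on--Zygmund, giving decay $\theta^{1-3/m}$), and $\pi^{(k)}_{\rm harm}$ is the residual homogeneous Stokes pressure handled by local maximal regularity---this is slightly more than the two-way split you sketch, and it is how one avoids any appeal to ``lower semicontinuity'' of the pressure part of $Y_{\osc}$.
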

Notice that the smallness of the mean velocity $|(v)_Q|$ is not propagated to small scales by Lemma~\ref{lem:interiorcampanato}. Rather, it is propagated by Lemma~\ref{lem:interiormorrey}.

We also require a translated, rescaled, and iterated version of the same lemmas:

\begin{lemma}[Iterated estimates]
	\label{lem:iteratedestimates}
	Let $(v,q)$ be a suitable weak solution of~\eqref{eq:perturbednse} on $Q_R^\iota(z_0)$ in the sense of Definition~\ref{def.sws}.
For all $\beta \in [-1,0)$ and $\alpha \in (0,\alpha_0)$, there exist a constant $\bar{\varepsilon}^\iota_0 > 0$ satisfying the following property.
If
\begin{equation}
	RY^\iota(z_0,R) + \norm{a}_{\mathbf{M}_m(Q_R^\iota(z_0))} \leq \bar{\varepsilon}^\iota_0,
\end{equation}
then, for all $r \in (0,R)$, we have
\begin{equation}
	Y^\iota(z_0,r) \les_\beta (r/R)^\beta Y^\iota(z_0,R).
\end{equation}
If also $\iota = {\rm bd}$, then
\begin{equation}
	Y^+(z_0,r) \les_\alpha (r/R)^\alpha Y^+(z_0,R).
\end{equation}
Finally, when $m > 3$ and $\alpha \in (0,\alpha_1)$, we have
\begin{equation}
	Y^\iota_\osc(z_0,r) \les_\alpha (r/R)^\alpha Y^\iota(z_0,R)
\end{equation}
in the interior and boundary settings.
\end{lemma}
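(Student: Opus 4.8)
\textbf{Proof plan for Lemma~\ref{lem:iteratedestimates}.}

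The plan is to reduce everything to the one-scale improvements already proved in Lemmas~\ref{lem:interiormorrey}, \ref{lem:boundarymorrey}, and~\ref{lem:interiorcampanato}, via (a) rescaling to reduce to the unit-scale statements and (b) iterating the one-scale gain across dyadic scales. First I would record the behavior of all the relevant quantities under the natural parabolic rescaling centered at $z_0$: if $(v,q)$ solves \eqref{eq:perturbednse} on $Q^\iota_R(z_0)$ with coefficient $a$, then $v_\rho(y,s) = \rho\, v(x_0 + \rho y, t_0 + \rho^2 s)$, $q_\rho(y,s) = \rho^2 q(\dots)$, $a_\rho(y,s) = \rho\, a(\dots)$ is a suitable weak solution of \eqref{eq:perturbednse} (with coefficient $a_\rho$) on $Q^\iota_{R/\rho}(0)$, and the scale-invariant quantities transform as $Y^\iota(z_0, \rho r, v, q) = Y^\iota(0, r, v_\rho, q_\rho)$, likewise for $Y^\iota_\osc$, $|(v)_{Q^\iota_{\rho r}(z_0)}| = |(v_\rho)_{Q^\iota_r(0)}|$, and crucially $\|a\|_{\mathbf{M}_m(Q^\iota_{\rho R}(z_0))} = \|a_\rho\|_{\mathbf{M}_m(Q^\iota_R(0))}$ because $\mathbf{M}_m$ was defined with the scale-invariant normalization \eqref{e.norma}. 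With these identities the lemma becomes: on $Q^\iota_1(0)$, smallness of $Y^\iota(1) + \|a\|_{\mathbf{M}_m(Q^\iota_1)}$ propagates the Morrey bound to all $r \leq 1$.

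The second step is the dyadic iteration. Fix $\beta \in [-1,0)$ and let $\theta_0 = \theta_0(\beta)$, $\varepsilon_0 = \varepsilon_0(\beta)$ be as in Lemma~\ref{lem:interiormorrey}. Set $\bar\varepsilon^\iota_0 \leq \varepsilon_0$. The key observation is that the hypothesis $RY^\iota(z_0,R) + \|a\|_{\mathbf{M}_m(Q^\iota_R(z_0))} \leq \bar\varepsilon^\iota_0$ is \emph{inherited at every smaller scale}: by the monotonicity $\|a\|_{\mathbf{M}_m(Q^\iota_r(z_1))} \leq \|a\|_{\mathbf{M}_m(Q^\iota_R(z_0))}$ whenever $Q^\iota_r(z_1) \subset Q^\iota_R(z_0)$ (noted just after the definition of $\mathbf{M}_m$), and — this is the point of iterating with the \emph{critical} exponent — taking $\beta = -1$ in Lemma~\ref{lem:interiormorrey} gives $\theta_0 Y^\iota(\theta_0 r) \leq \theta_0^{-1}\cdot\theta_0 \cdot r Y^\iota(r) = r Y^\iota(r)$, i.e.\ the quantity $rY^\iota(r)$ is non-increasing under one $\theta_0$-step, so the smallness of $rY^\iota(r) + \|a\|_{\mathbf{M}_m}$ persists. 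Hence one may apply Lemma~\ref{lem:interiormorrey} (rescaled to scale $\theta_0^j$) repeatedly to obtain $Y^\iota(\theta_0^j) \leq \theta_0^{j\beta} Y^\iota(1)$ for every $j$, and then fill the gaps between consecutive dyadic scales $\theta_0^{j+1} < r \leq \theta_0^j$ at the cost of a constant $\theta_0^{-|\beta|}$ absorbed into $\les_\beta$. Undoing the rescaling gives the claimed $Y^\iota(z_0,r) \les_\beta (r/R)^\beta Y^\iota(z_0,R)$. The boundary estimate $Y^+(z_0,r) \les_\alpha (r/R)^\alpha Y^+(z_0,R)$ for $\alpha \in (0,\alpha_0)$ is identical, using Lemma~\ref{lem:boundarymorrey} in place of Lemma~\ref{lem:interiormorrey}; here the gain is genuinely a decay ($\alpha > 0$) thanks to the no-slip condition, and no preliminary Morrey bound is needed to make the iteration close since $\|a\|_{\mathbf{M}_m}$ is already small and $Y^+$ decays on its own.

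For the oscillation estimate ($m > 3$, $\alpha \in (0,\alpha_1)$) one must be more careful, since Lemma~\ref{lem:interiorcampanato} requires smallness of the \emph{mean} $|(v)_Q|$, which is \emph{not} scale-invariant and is not propagated by the Campanato step itself — only by the Morrey step. The plan is: first run the Morrey iteration just established (with $\beta = -1$) to get, for all $r \in (0,R)$, $rY^\iota(z_0,r) \les Y^\iota(z_0,R) \leq \bar\varepsilon^\iota_0$; in particular $|(v)_{Q^\iota_r(z_0)}| \leq r^{-1}\cdot rY^\iota(z_0,r) \cdot (\text{const}) \les r^{-1}\bar\varepsilon^\iota_0 \cdot\dots$ — wait, rather, the mean is controlled by $Y^\iota$ itself up to the right power of $r$: after rescaling to unit scale, $|(v_{\theta_0^j})_{Q^\iota_1}| \les Y^\iota(z_0,\theta_0^j) \cdot \dots \les \theta_0^{-j}\cdot(\text{small})$, and combined with the smallness of $\|a_{\theta_0^j}\|_{L^{5m/3}(Q^\iota_1)} \leq \|a\|_{\mathbf{M}_m(Q^\iota_R(z_0))}$ one checks the hypothesis of Lemma~\ref{lem:interiorcampanato} holds at scale $\theta_0^j$ after shrinking $\bar\varepsilon^\iota_0$ depending on $\alpha_1$. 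Then one iterates Lemma~\ref{lem:interiorcampanato}: at each step $Y^\iota_\osc(\theta_1^{j+1}) \leq \theta_1^\alpha(Y^\iota_\osc(\theta_1^j) + |(v)_{Q^\iota_{\theta_1^j}}|\,\|a\|_{L^{5m/3}(Q^\iota_{\theta_1^j})})$, and one sums the geometric series, using that the error terms $|(v)_{Q^\iota_{\theta_1^j}}|\,\|a\|_{L^{5m/3}}$ are themselves controlled by $Y^\iota(z_0,R)$ times a harmless power (the $\|a\|_{L^{5m/3}}$ factor carries a strictly positive scaling power $1-3/m > 0$, which is exactly why $\alpha < \alpha_1 \leq 1-3/m$ makes the geometric sum converge and yields $Y^\iota_\osc(z_0,r) \les_\alpha (r/R)^\alpha Y^\iota(z_0,R)$ — note the right-hand side is $Y^\iota$, not $Y^\iota_\osc$, precisely because the mean contributes). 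The boundary case $Y^+_\osc$ is handled by combining this interior Campanato iteration with the boundary Morrey decay $Y^+(z_0,r) \les (r/R)^\alpha Y^+(z_0,R)$, exactly as in Step~3 of the summary: near the flat boundary one simply bounds $Y^+_\osc(z_0,r) \les Y^+(z_0,r)$ plus interior contributions.

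\textbf{Main obstacle.} The routine part is the rescaling bookkeeping; the genuine subtlety is the bookkeeping of the \emph{mean velocity} in the oscillation iteration — ensuring that the hypothesis $|(v)_{Q^\iota_{\theta_1^j}}| + Y^\iota_\osc(\theta_1^j) + \|a\|_{L^{5m/3}(Q^\iota_{\theta_1^j})} \leq \varepsilon_1$ of Lemma~\ref{lem:interiorcampanato} remains valid at \emph{every} scale $\theta_1^j$. This is not automatic because $|(v)_Q|$ is not scale-invariant and the Campanato step alone does not keep it small; it is kept small only by first invoking the critical ($\beta=-1$) Morrey bound, and one must check the two iterations (Morrey, then Campanato) are compatible — in particular that $\theta_1$ can be taken to be a power of $\theta_0$ (or conversely), so that the dyadic scales line up and the constants can be chosen in the right order ($\varepsilon_0, \theta_0$ fixed by $\beta=-1$ and $\alpha$; then $\varepsilon_1, \theta_1$ by $\alpha$; then $\bar\varepsilon^\iota_0$ small enough in terms of all of these). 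Tracking the precise power of $r$ that appears in front of $Y^\iota(z_0,R)$ on the right-hand side — making sure it is exactly $(r/R)^\alpha$ and not, say, $(r/R)^{\alpha}\log(R/r)$ — is where the condition $\alpha < \alpha_1 \leq 1 - 3/m$ (strict) is consumed, via convergence of $\sum_j \theta_1^{j\alpha}\theta_1^{-j(1-3/m)} \cdot (\text{stuff})$; this borderline summation is the one computation worth doing carefully.
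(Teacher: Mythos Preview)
Your proposal is correct and follows the same overall strategy as the paper: rescale to unit scale, iterate the one-step Lemmas~\ref{lem:interiormorrey}--\ref{lem:interiorcampanato} along geometric scales, and use the critical Morrey bound to keep the mean small for the Campanato iteration. Two points where the paper is slightly simpler than your sketch are worth noting. First, in the boundary case the oscillation estimate requires no Campanato iteration at all: once $Y^+(r)\les_\alpha r^\alpha Y^+(1)$ is established, the paper simply uses $Y^+_\osc\le 2Y^+$. Second, your worry about aligning the scales $\theta_0$ and $\theta_1$ is unnecessary: the paper first proves the Morrey bound $Y(r)\les_\beta r^\beta Y(1)$ for \emph{all} $r\in(0,1]$ (filling in between dyadic scales), which then feeds directly into the Campanato induction at the separate scales $\theta_1^k$. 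A minor algebraic difference is that the paper runs the Morrey step with the specific exponent $\beta=\gamma-\alpha_1$, $\gamma=(\alpha+\alpha_1)/2$, which makes the inductive inequality $Y_\osc(\theta_1^{k+1})\le 3\theta_1^{(k+1)\alpha}Y(1)$ close in one line; your choice $\beta=-1$ also works and leads to the same conclusion.
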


\begin{proof}[Proof of Lemma~\ref{lem:interiormorrey} and Lemma~\ref{lem:boundarymorrey}]
We prove Lemma~\ref{lem:interiormorrey} and Lemma~\ref{lem:boundarymorrey} in tandem.
Let $\iota \in \{ {\rm int}, {\rm bd} \}$. If $\iota = {\rm int}$, let $\alpha \in [-1,0)$. If $\iota = {\rm bd}$, let $\alpha \in (0,\alpha_0)$.

\emph{1. Set-up}.
For contradiction, suppose that for each $\bar{\theta} \in (0,1/2)$, there exists a sequence $(v^{(k)},q^{(k)})$ of solutions to the perturbed Navier-Stokes equations in $Q^\iota$ with lower order terms $(a^{(k)})$ satisfying
\begin{equation}
	\varepsilon_k := Y^\iota(1,v^{(k)},q^{(k)}) \to 0^+,
\end{equation}
\begin{equation}
	\norm{a^{(k)}}_{\mathbf{A}_m(Q^\iota)} \to 0
\end{equation}
and
\begin{equation}
	Y^\iota(\bar{\theta},v^{(k)},q^{(k)}) > \bar{\theta}^\alpha \varepsilon_k.
\end{equation}
In Step 6 below, $\bar\theta$ will be fixed according to the limit problem (which is independent of $(v^{(k)},q^{(k)})$). 
 To capture the leading order terms in the PDE satisfied by $(v^{(k)},q^{(k)})$, we define
\begin{equation}
	w^{(k)} = \frac{v^{(k)}}{\varepsilon_k}, \quad \pi^{(k)} = \frac{q^{(k)}-
	[q^{(k)}]_{B^\iota}} {\varepsilon_k}.
\end{equation}
Then
\begin{equation}\label{e.contra1}
	Y^\iota(1,w^{(k)},\pi^{(k)}) = 1
\end{equation}
and
\begin{equation}
	 Y^\iota(\theta,w^{(k)},\pi^{(k)}) > \bar{\theta}^\alpha.
\end{equation}
Moreover, $(w^{(k)},\pi^{(k)})$ solves
\begin{equation}
	\label{eq:eqnforwk}
\left\lbrace
\begin{aligned}
	\p_t w^{(k)} - \Delta w^{(k)} + (\varepsilon_k w^{(k)} + a^{(k)}) \cdot \nabla w^{(k)} + w^{(k)} \cdot \nabla a^{(k)} + \nabla \pi^{(k)} &= 0 \\
	\div w^{(k)} &= 0.
	\end{aligned}
	\right.
\end{equation}
When $\iota = {\rm bd}$, we also require $w^{(k)} = 0$ on $\p_{\rm flat} Q^+$.

\emph{2. Energy estimates}. 
Let $1/2 \leq r < R \leq 1$ and $\phi \in C^\infty_0(Q_R)$ satisfying $\phi \equiv 1$ on $Q_r$, $0 \leq \phi \leq 1$ globally, and $|\p_t \phi| + |\nabla \phi|^2 + |\nabla^2 \phi| \les 1/(R-r)^2$. Let $\Phi = \phi^2$.

The local energy inequality becomes
\begin{equation}
\label{eq:localenergyineqwk}
\begin{aligned}
	&\int_{B^\iota} |w^{(k)}(x,t)|^2 \Phi(x,t) \, dx +2 \iint_{{B^\iota} \times (-1,t)} |\nabla w^{(k)}|^2 \Phi \, dx \, ds 
	\\
	&\leq \iint_{{B^\iota} \times (-1,t)} |w^{(k)}|^2 (\p_t + \Delta) \Phi  \, dx \, ds\\
	&\quad+ \iint_{{B^\iota} \times (-1,t)} (\varepsilon_k |w^{(k)}|^2 + 2 \pi^{(k)}) w^{(k)} \cdot \nabla \Phi \, dx \, ds \\
	&\quad+ \iint_{{B^\iota} \times (-1,t)} |w^{(k)}|^2 a^{(k)} \cdot \nabla \Phi \, dx \, ds\\
	&\quad+ 2 \iint_{{B^\iota} \times (-1,t)} a^{(k)} \otimes w^{(k)} : (\Phi \nabla  w^{(k)}  + w^{(k)} \otimes \nabla \Phi) \, dx \, ds.
	\end{aligned}
\end{equation}
We have
\begin{equation}
	\label{eq:movingphiinside}
\begin{aligned}
	\iint_{{B^\iota} \times (-1,t)} |\nabla (w^{(k)} \phi)|^2 \, dx \, ds &\les \iint_{{B^\iota} \times (-1,t)} |\nabla w^{(k)}|^2 \Phi + |w^{(k)}|^2 |\nabla \phi|^2 \, dx \, ds.
	\end{aligned}
\end{equation}
Together,~\eqref{eq:localenergyineqwk} and~\eqref{eq:movingphiinside} yield
\begin{equation}
\label{eq:localenergyineqwkrewritten}
\begin{aligned}
	&\sup_{t \in (-1,0)} \int_{B^\iota} |w^{(k)}(x,t) \phi (x,t)|^2 \, dx + \iint_{Q^\iota} |\nabla (w^{(k)} \phi)|^2 + |\nabla w^{(k)}|^2 \Phi   \, dx \, ds 
	\\
	& \les \iint_{Q^\iota} |w^{(k)}|^2 (|\p_t \Phi| + |\Delta \Phi| + |\nabla \phi|^2) \, dx \, ds\\
	&\quad+ \iint_{Q^\iota} (\varepsilon_k |w^{(k)}|^2+ |\pi^{(k)}|) |w^{(k)}| |\nabla \Phi| \, dx \, ds \\
	&\quad+\iint_{Q^\iota} |w^{(k)}|^2 |a^{(k)}| |\nabla \Phi| \, dx \, ds + \iint_{Q^\iota} |a^{(k)}| |w^{(k)}| |\nabla w^{(k)}| \Phi  \, dx \, ds.
	\end{aligned}
\end{equation}
In particular, Sobolev's embedding and elementary interpolation in Lebesgue spaces yield
\begin{equation}
	\label{eq:interpest}
	C_0 \norm{w^{(k)} \phi}_{L^q_t L^p_x(Q^\iota)}^2 \leq \text{right-hand side of } \eqref{eq:localenergyineqwkrewritten}
\end{equation}
as long as $q \in [2,\infty]$ and $\#(p,q) = -3/2$. For example, we may set $p=q=10/3$.

We now estimate each term on the right-hand side of~\eqref{eq:localenergyineqwkrewritten}. Immediately,
\begin{equation}
	\begin{aligned}
	&\iint_{Q^\iota} |w^{(k)}|^2 (|\p_t \Phi| + |\Delta \Phi| + |\nabla \phi|^2) \, dx \, ds + 2 \iint_{Q^\iota} |\pi^{(k)}|  |w^{(k)}| |\nabla \Phi| \, dx \, ds \\
	&\quad \les \frac{Y^\iota(1,w^{(k)},\pi^{(k)})^2}{(R-r)^2}.
	\end{aligned}
\end{equation}
Next, we estimate the terms involving $a^{(k)}$:
\begin{equation}
\begin{aligned}
	&\iint_{Q^\iota_R \setminus Q^\iota_r} |w^{(k)} \phi| |w^{(k)}|  |\nabla \phi|  |a^{(k)}|  \, dx \, ds \\
	&\quad \les \frac{1}{R-r} \norm{w^{(k)}}_{L^2(Q^\iota_R \setminus Q^\iota_r)} \norm{w^{(k)} \phi}_{L^{\frac{10}{3}}(Q^\iota)} \norm{a^{(k)}}_{L^{5m/3}(Q^\iota)} \\
	&\quad \les \frac{Y^\iota(1,w^{(k)},\pi^{(k)})^2}{(R-r)^2} + \norm{w^{(k)} \phi}_{L^{\frac{10}{3}}(Q^\iota)}^2 \norm{a^{(k)}}_{L^{5m/3}(Q^\iota)}^2
	\end{aligned}
\end{equation}
and
\begin{equation}
	\iint_{Q^\iota} |a^{(k)}| |w^{(k)}| |\nabla w^{(k)}| \phi^2 \, dx \, ds \les \norm{\phi \nabla w^{(k)}}_{L^2(Q^\iota)} \norm{w^{(k)} \phi}_{L^{\frac{10}{3}}(Q^\iota)} \norm{a^{(k)}}_{L^{5m/3}(Q^\iota)}.
\end{equation}
Finally, we estimate the term\footnote{Similar computations arise in the De Giorgi--Nash--Moser theory for parabolic equations with divergence-free drift $b$, specifically, in the treatment of the drift term in the proof of the Cacciopolli inequality~\cite{albritton2021regularity}.} involving $|w^{(k)}|^3$:
\begin{equation}
\begin{aligned}
	&\varepsilon_k \iint_{Q^\iota_R \setminus Q^\iota_r}  |w^{(k)}|^3 |\nabla \Phi| \, dx \, ds \\
	&\quad \les \frac{\varepsilon_k}{R-r} \norm{w^{(k)}}_{L^{\xi_t}_t L^{\xi_x}_x(Q^\iota_R \setminus Q^\iota_r)} \norm{w^{(k)}}^2_{L^{2 \zeta_t}_t L^{2 \zeta_x}_x(Q^\iota_R \setminus Q^\iota_r)} \\
	&\quad\les \frac{\varepsilon_k}{R-r} Y^\iota(1,w^{(k)},\pi^{(k)}) \norm{w^{(k)}}^{2\kappa}_{L^q_t L^p_x(Q^\iota_R \setminus Q^\iota_r)} \norm{w^{(k)}}^{2(1-\kappa)}_{L^2_{t,x}(Q^\iota_R \setminus Q^\iota_r)} \\
	&\quad\les \frac{O(1)}{(R-r)^{\frac{1}{1-\kappa}}} + \varepsilon_k^{\frac{1}{\kappa}} \norm{w^{(k)}}_{L^q_t L^p_x(Q^\iota_R \setminus Q^\iota_r)}^2.
	\end{aligned}
\end{equation}
where $\kappa \in (0,1)$, $\#(p,q) = -3/2$, and $q \in (2,+\infty]$ is now fixed.\footnote{\label{foot.num}The numerology is as follows: $\kappa \in (0,1)$ is chosen to satisfy
\begin{equation}
    \label{eq:zetaxzetat}
	\#(2\zeta_x,2\zeta_t) = \kappa (-3/2) + (1-\kappa) (-5/2),
\end{equation}
which is possible because $\#(2\zeta_x,2\zeta_t) = -7/4+\delta_0/2$. Here, $-3/2$ is the scaling number associated with the energy space and $-5/2 = \#(2,2)$. Then $(p,q)$ are defined by
\begin{equation}
    \label{eq:moreaboutzetatzetax}
	\frac{1}{2\zeta_t} = \frac{\kappa}{q} + \frac{1-\kappa}{2}, \quad
	\frac{1}{2\zeta_x} = \frac{\kappa}{p} + \frac{1-\kappa}{2}.
\end{equation}
It follows from~\eqref{eq:zetaxzetat} and~\eqref{eq:moreaboutzetatzetax} that $\#(p,q) = -3/2$ and $q \in (2,+\infty]$.}

Since
\begin{equation}
    \norm{a^{(k)}}_{L^{5m/3}(Q^\iota)} \to 0\quad\mbox{as}\quad k \to +\infty,
\end{equation} various terms containing it may be absorbed into the left-hand side of~\eqref{eq:localenergyineqwkrewritten}. This yields
\begin{equation}
\label{eq:localenergyineqwkrewrittenagain}
\begin{aligned}
	&\sup_{t \in (-r^2,0)} \int_{B^\iota_r} |w^{(k)}(x,t)|^2 \, dx + \iint_{Q^\iota_r} |\nabla w^{(k)}|^2  \, dx \, ds + \norm{w^{(k)}}_{L^q_t L^p_x(Q^\iota_r)}^2 \\
	&\quad\les \frac{O(1)}{(R-r)^{\frac{1}{1-\kappa}}} + \varepsilon_k^{\frac{1}{\kappa}} \norm{w^{(k)}}_{L^q_t L^p_x(Q^\iota_R \setminus Q^\iota_r)}^2.
	\end{aligned}
\end{equation}
When $k \geq k_0 \gg 1$, we have $\varepsilon_k \ll 1$, so we may iterate the inequality~\eqref{eq:localenergyineqwkrewrittenagain} `outward' along a well-chosen increasing sequence of scales $r_j$, $j = 0,1,2,\hdots$, with $r_0 = 3/4$ and $r_j \to 1$ as $j \to +\infty$, see~\cite[p. 191, Lemma 6.1]{giustibook}. Hence, when $k \gg 1$,
\begin{equation}
\label{eq:localenergyineqwkrewrittenagainagain}
\begin{aligned}
	&\sup_{t \in (-9/16,0)} \int_{B^\iota_{3/4}} |w^{(k)}(x,t)|^2 \, dx + \iint_{Q^\iota_{3/4}} |\nabla w^{(k)}|^2  \, dx \, ds \les 1. 
	\end{aligned}
\end{equation}
Therefore, along a subsequence (without relabeling):
\begin{equation}
	w^{(k)} \wstar U \text{ in } L^\infty_t L^2_x \cap L^2_t H^1_x(Q^\iota_{3/4})
\end{equation}
\begin{equation}
	\pi^{(k)} \wstar P \text{ in } L^{\zeta_t}_t L^{\zeta_x}_x(Q^\iota)
\end{equation}
and
\begin{equation}
    \label{eq:gloriousstokesestimate}
	\sup_{t \in (-9/16,0)} \int_{B^\iota_{3/4}} |U(x,t)|^2 \, dx + \iint_{Q^\iota_{3/4}} |\nabla U|^2  \, dx \, ds + Y^\iota(1,U,P) \leq 1.
\end{equation}
If $\iota = {\rm bd}$, we also have $U|_{\p_{\rm flat} Q^+_{3/4}} = 0$. Moreover, after analyzing the convergence of each term in~\eqref{eq:eqnforwk}, we find
\begin{equation}
	\label{eq:UsatisfiesStokes}
\left\lbrace
\begin{aligned}
	\p_t U - \Delta U + \nabla P &= 0 \quad \text{ in } Q^\iota_{3/4} \\
	\div U &= 0 \quad \text{ in } Q^\iota_{3/4}.
	\end{aligned}
	\right.
\end{equation}

\emph{3. Time derivative estimates}. Using the above estimates and
\begin{equation}
	\p_t w^{(k)} = \Delta w^{(k)} - \div [ (\varepsilon_k w^{(k)} + a^{(k)}) \otimes w^{(k)} + w^{(k)} \otimes a^{(k)} ] - \nabla \pi^{(k)},
\end{equation}
we estimate the time derivative:
\begin{equation}
	\norm{\p_t w^{(k)}}_{L^{1+\nu}_t W^{-1,1+\nu}_x(Q^\iota_{3/4})} \les 1,
\end{equation}
where $0 < \nu \ll 1$. 
The Aubin--Lions lemma yields, up to a subsequence,\footnote{The requisite chain of embeddings is $H^1(B_{1/2}^\iota) \overset{\rm cpt}{\into} L^2(B_{1/2}^\iota) \into W^{-1,1+\nu}(B_{1/2}^\iota)$.}
\begin{equation}
	w^{(k)} \to U \text{ in } L^2_{t,x}(Q_{3/4}^\iota).
\end{equation}
By interpolation with the energy norm, we also have
\begin{equation}
	\label{eq:strongconvofwk}
	w^{(k)} \to U \text{ in } L^q_t L^p_x(Q_{3/4})
\end{equation}
when $\#(p,q) < -3/2$ and $q \in [2,+\infty)$.

\emph{4. Estimates for the limit equation}. By~\eqref{eq:gloriousstokesestimate} and local maximal regularity \cite[Theorem 1.2]{Ser09} for the time-dependent Stokes equations~\eqref{eq:UsatisfiesStokes} in $Q^\iota_{3/4}$,
\begin{equation}
	\norm{\p_t U, \nabla^2 U, \nabla U, U, \nabla P}_{L^{\zeta_t}_t L^p_x(Q^\iota_{1/2})} \les_p 1
\end{equation}
for all $p \in [1,+\infty)$, where Ehrling's inequality~\cite[p. 77]{galdi} is used to estimate $\nabla U$ in terms of $U$ and $\nabla^2 U$. In particular, parabolic Sobolev embedding into H{\"o}lder spaces (see~\cite[Lemma B.1]{albrittonbarkerlocalregII}) yields
\begin{equation}\label{e.holderU}
	\norm{U}_{C^{\alpha'}_\para(Q^\iota_{1/2})} \les_{\alpha'} 1
\end{equation}
for all $\alpha' \in (0,\alpha_0)$, where we recall that $\alpha_0 = 2 - 2/\zeta_t$. 
We claim that there exists $\theta_1 \in (0,1/4)$ such that
\begin{equation}
	\label{eq:velocityclaim}
	\theta^{\#\xi} \norm{U}_{L^{\xi_t}_t L^{\xi_x}_x(Q^\iota_\theta)} \leq \theta^\alpha/8
\end{equation}
for all $\theta \in (0,\theta_1]$. Here, we will distinguish between the interior and boundary cases. When $\iota = {\rm int}$, we took $\alpha \in [-1,0)$, and~\eqref{eq:velocityclaim} follows from $\norm{U}_{L^\infty(Q_{1/2})} \les 1$.
 When $\iota = {\rm bd}$, we took $\alpha \in (0,\alpha_0)$, and~\eqref{eq:velocityclaim} follows from $\norm{U}_{C^{\alpha'}_\para(Q^+_{1/2})} \les_{\alpha'} 1$ for all $\alpha' \in (\alpha,\alpha_0)$ and the no-slip boundary condition.

Finally, by the strong convergence~\eqref{eq:strongconvofwk} of $w^{(k)} \to U$ in $L^{\xi_t}_t L^{\xi_x}_x(Q^\iota_{3/4})$,
\begin{equation}
	\label{eq:velocitystrongconvergence}
	\limsup_{k \to +\infty} \theta^{\#\xi} \norm{w^{(k)}}_{L^{\xi_t}_t L^{\xi_x}_x(Q^\iota_\theta)} \leq \theta^\alpha/4.
\end{equation}

\emph{5. Pressure estimates}. Next, we decompose the pressure. Let
\begin{equation}
\begin{aligned}
	f^{(k)} &= \varepsilon_k w^{(k)} \cdot \nabla w^{(k)} + a^{(k)} \cdot \nabla w^{(k)} + w^{(k)} \cdot \nabla a^{(k)}.
	\end{aligned}
\end{equation}
We will show
\begin{equation}
	\norm{f^{(k)}}_{L^{\zeta_t}_t L^{\ell_x}_x(Q_{3/4}^\iota)} \to 0 \text{ as } k \to +\infty
\end{equation}
where $\ell_x$ is defined by $\#(\ell_x,\zeta_t) = -4$. We estimate term-by-term:
\begin{equation}
	\label{eq:festimate1}
	\varepsilon_k \norm{w^{(k)} \cdot \nabla w^{(k)}}_{L^{\zeta_t}_t L^{\ell_x}_x(Q_{3/4}^\iota)}  \les \varepsilon_k \norm{w^{(k)}}_{L^{q_1}_t L^{p_1}_x(Q_{3/4}^\iota)} \norm{\nabla w^{(k)}}_{L^2_{t,x}(Q_{3/4}^\iota)} \les \varepsilon_k \to 0,
\end{equation}
\begin{equation}
	\norm{a^{(k)} \cdot \nabla w^{(k)}}_{L^{\zeta_t}_t L^{\ell_x}_x(Q_{3/4}^\iota)} \les \norm{a^{(k)}}_{L^{q_1}_t L^{p_1}_x(Q^\iota_{3/4})} \norm{\nabla w^{(k)}}_{L^2(Q^\iota_{3/4})} \les \norm{a^{(k)}}_{\mathbf{A}_m(Q^\iota)} \to 0,
\end{equation}
and
\begin{equation}
	\norm{w^{(k)} \cdot \nabla a^{(k)}}_{L^{\zeta_t}_t L^{\ell_x}_x(Q^\iota_{3/4})} \les \norm{w^{(k)}}_{L^q_t L^p_x} \norm{\nabla a^{(k)}}_{L^{q_2}_t L^{p_2}_x(Q_{3/4}^\iota)} \les \norm{a^{(k)}}_{\mathbf{A}_m(Q^\iota)} \to 0,
\end{equation}
where $\#(p,q) = -3/2$, $1/\zeta_t = 1/q+1/q_2$, and $q \in [2,+\infty]$ is fixed. 
Recall that $(p_1,q_1)$, $(p_2,q_2)$ are defined above~\eqref{e.norma}.

Consider the solution $(\tilde{w}^{(k)},\tilde{\pi}^{(k)})$ of the Stokes equations
\begin{equation}
\left\lbrace
\begin{aligned}
	\p_t \tilde{w}^{(k)} - \Delta \tilde{w}^{(k)} + \nabla \tilde{\pi}^{(k)} &= - \chi f^{(k)} && \text{ in } \R^3_{\iota} \times (-1,0) \\
		\div \tilde{w}^{(k)} &= 0 && \text{ in } \R^3_{\iota} \times (-1,0)  \\
		\tilde{w}^{(k)}(\cdot,-1) &= 0 &&
		\end{aligned}
		\right.
\end{equation}
where $\R^3_{\rm int} = \R^3$, $\R^3_{\rm bd} = \R^3_+$, and $\chi \in C^\infty_0(\R^{3+1})$ with $\chi \equiv 1$ on $Q_{5/8}$ and $\chi \equiv 0$ outside of $Q_{3/4}$. When $\iota = {\rm bd}$, we impose also the no-slip condition $\tilde{w}^{(k)}|_{\p \R^3_+}(\cdot,t) = 0$ for all $t \in (-1,0)$. Global maximal regularity implies
\begin{equation}\label{e.globmaxreg}
	\norm{\nabla \tilde{\pi}^{(k)}, \p_t \tilde{w}^{(k)}, \nabla^2 \tilde{w}^{(k)}, \nabla \tilde{w}^{(k)}, \tilde{w}^{(k)}}_{L^{\zeta_t}_t L^{\ell_x}_x(\R^3_\iota \times (-1,0))} \to 0 \text{ as } k \to +\infty
\end{equation}
and, by a Poincar{\'e}-Sobolev inequality, for all $\theta \in (0,5/8]$,
\begin{equation}
	\label{eq:strongconvofpressure}
	\norm{\tilde{\pi}^{(k)} - [\tilde{\pi}^{(k)}]_{B_\theta^\iota}}_{L^{\zeta_t}_t L^{\zeta_x}_x(Q^\iota_\theta)} \to 0 \text{ as } k \to +\infty,
\end{equation}
since $\# \zeta = -7/2 + \delta_0  < -3 = 1+\#(\ell_x,\zeta_t)$.
Let
\begin{equation}
	\label{eq:harmdef}
	w^{(k)}_{\harm} = w^{(k)} - \tilde{w}^{(k)}, \quad \pi^{(k)}_\harm = \pi^{(k)} - [\pi^{(k)}]_{B^\iota} - \tilde{\pi}^{(k)}.
\end{equation}
Then
\begin{equation}
    \label{eq:ilovethestokesequations}
\left\lbrace
	\begin{aligned}
	\p_t w^{(k)}_\harm - \Delta w^{(k)}_\harm + \nabla \pi^{(k)}_\harm &= 0 && \text{ in } Q^\iota_{5/8} \\
	\div w^{(k)}_\harm &= 0 && \text{ in } Q^\iota_{5/8}
	\end{aligned}
	\right.
\end{equation}
with $w^{(k)}_\harm|_{\p_{\rm flat} Q^+_{5/8}} = 0$ in the boundary case. 
From~\eqref{eq:harmdef} and the estimates \eqref{e.contra1}, \eqref{e.globmaxreg} and \eqref{eq:strongconvofpressure}, we know
\begin{equation}
    \label{eq:theestimatesweknow}
	\norm{w^{(k)}_\harm, \nabla w^{(k)}_\harm}_{L^{1+\nu}_{t,x}(Q^\iota_{3/4})} + \norm{\pi^{(k)}_\harm - [\pi^{(k)}_\harm]_{B^\iota_{3/4}}}_{L^{\zeta_t}_t L^{1+\nu}_x(Q^\iota_{3/4})} \les 1,
	\end{equation}
	where $0 < \nu \ll 1$.
Local maximal regularity estimates for solutions of~\eqref{eq:ilovethestokesequations} satisfying~\eqref{eq:theestimatesweknow} imply
\begin{equation}
	\norm{\nabla \pi^{(k)}_\harm}_{L^{\zeta_t}_t L^p_x(Q^\iota_{1/2})} \les_p 1 \text{ for all } p \in [1,+\infty).
\end{equation}
Now, by Poincar{\'e}'s inequality,
\begin{equation}
	\label{eq:whatweknowaboutpiharm}
	\begin{aligned}
	\norm{\pi^{(k)}_\harm - [\pi^{(k)}_\harm]_{B^\iota_\theta}}_{L^{\zeta_t}_t L^{\zeta_x}_x(B^\iota_\theta \times (-1/4,0))} &\les \theta \norm{\nabla \pi^{(k)}_\harm}_{L^{\zeta_t}_t L^{\zeta_x}_x(B^\iota_\theta \times (-1/4,0))} \\
	&\les_p \theta^{1+\frac{3}{\zeta_x}-\frac{3}{p}}\\
	&\les_p \theta^{|\#\zeta|-1} \theta^{\alpha_0 - \frac{3}{p}}.
	\end{aligned}
\end{equation}
 In particular, whenever $\alpha < \alpha_0$, there exists $\theta_2 = \theta_2(\alpha)\in (0,1/4)$ such that, for all $\theta \in (0,\theta_2]$ and $k \gg 1$, we have
\begin{equation}
    \label{eq:piharmbound}
	\theta^{\#\zeta+1} \norm{\pi^{(k)}_\harm - [\pi^{(k)}_\harm]_{B^\iota_\theta}}_{L^{\zeta_t}_t L^{\zeta_x}_x(Q^\iota_\theta)} \leq \frac{\theta^\alpha}{4}.
\end{equation}
Hence, using the strong convergence of $\tilde{\pi}^{(k)}$ in~\eqref{eq:strongconvofpressure}, we have
\begin{equation}
	\label{eq:pressurelimsupassertion}
	\limsup_{k \to +\infty} \theta^{\#\zeta+1} \norm{\pi^{(k)} - [\pi^{(k)}]_{B^\iota_\theta}}_{L^{\zeta_t}_t L^{\zeta_x}_x(Q^\iota_\theta)} \leq \frac{\theta^\alpha}{4}.
\end{equation}

\emph{6. Contradiction}. Let $\theta_0^\iota = \min(\theta_1,\theta_2)$. Then, in light of~\eqref{eq:velocitystrongconvergence} and~\eqref{eq:pressurelimsupassertion}, we have
\begin{equation}
	\limsup_{k \to +\infty} Y^\iota(\theta_0^\iota,w^{(k)},\pi^{(k)}) \leq (\theta_0^\iota)^\alpha/2.
\end{equation}
Choosing $\bar{\theta} = \theta_0^\iota$ at the beginning of the proof yields the desired contradiction.
\end{proof}

\begin{proof}[Proof of Lemma~\ref{lem:interiorcampanato}]
As in Lemma~\ref{lem:interiormorrey}, we suppose otherwise. That is, for each $\alpha \in (0,\alpha_1)$ and $\bar{\theta} \in (0,1/2)$, there exists a sequence $(v^{(k)},q^{(k)})$ of solutions to the perturbed Navier-Stokes equations in $Q$ with lower order terms $(a^{(k)})$ satisfying
\begin{equation}
	\varepsilon_k := Y_\osc(1,v^{(k)},q^{(k)}) + \lvert (v^{(k)})_Q \rvert \norm{a}_{\mathbf{A}_m(Q)} \to 0^+,
\end{equation}
\begin{equation}
    \label{eq:vkmeantozero}
	|(v^{(k)})_{Q}| \to 0,
\end{equation}
\begin{equation}
	\norm{a^{(k)}}_{\mathbf{A}_m(Q)} \to 0,
\end{equation}
and
\begin{equation}
	Y_\osc(\theta,v^{(k)},q^{(k)}) > \varepsilon_k  \theta^\alpha.
\end{equation}
We define $(w^{(k)},\pi^{(k)})$ by subtracting off the mean velocity as follows:
\begin{equation}
	w^{(k)} = \frac{v^{(k)}-(v^{(k)})_Q}{\varepsilon_k}, \quad \pi^{(k)} = \frac{q^{(k)}  - [q^{(k)}]_B}{\varepsilon_k}.
\end{equation}
Then
\begin{equation}
	Y(1,w^{(k)},\pi^{(k)}) = Y_\osc(1,w^{(k)},\pi^{(k)}) \leq 1
\end{equation}
and
\begin{equation}
	Y_\osc(\theta,w^{(k)},\pi^{(k)}) > \theta^\alpha.
\end{equation}
The remainder of the proof is similar to the proof of Lemma~\ref{lem:interiormorrey}, so we will sketch some arguments. The main difference is that
 $(w^{(k)},\pi^{(k)})$ solves the perturbed Navier-Stokes equations with two additional terms, namely, $(v^{(k)})_Q \cdot \nabla w^{(k)}$ and $(v^{(k)})_Q \cdot \nabla a^{(k)} / \varepsilon_k$:
\begin{equation}
\begin{aligned}
	&\p_t w^{(k)} - \Delta w^{(k)} + \left[ \varepsilon_k w^{(k)} + a^{(k)} + (v^{(k)})_Q \right] \cdot \nabla w^{(k)} \\
	&\quad + \left[w^{(k)} + \frac{(v^{(k)})_Q}{\varepsilon_k}\right ] \cdot \nabla a^{(k)} + \nabla \pi^{(k)} = 0, \quad \div w^{(k)} = 0 \, .
	\end{aligned}
\end{equation}
The term $(v^{(k)})_Q \cdot \nabla a^{(k)} / \varepsilon_k$ requires special attention, since \emph{a priori} it may not be converging to zero. Indeed, upon passing to a subsequence, we have
\begin{equation}
	F_k := a^{(k)} \otimes \frac{(v^{(k)})_Q}{\varepsilon_k} \wto F \text{ in } L^{5m/3}(Q),
\end{equation}
where $\norm{F}_{L^{5m/3}(Q)} \les 1$. This contributes a term $\div F$ to the limiting PDE.
 
The new terms above contribute the following to the right-hand side of the local energy inequality~\eqref{eq:localenergyineqwk}:
\begin{equation}
	\begin{aligned}
	&\iint_{B \times (-1,t)} |w^{(k)}|^2 (v^{(k)})_Q \cdot \nabla \Phi \, dx \, ds \\
	&\quad+ 2 \iint_{B \times (-1,t)} a^{(k)} \otimes \frac{(v^{(k)})_Q}{\varepsilon_k} : [\Phi \nabla w^{(k)} + w^{(k)} \otimes \nabla \Phi] \, dx \, ds.
	\end{aligned}
\end{equation}
We estimate this quantity in the following way:
\begin{equation}
\begin{aligned}
	\iint_Q |w^{(k)}|^2 |(v^{(k)})_Q| |\nabla \Phi| \, dx \, ds &\les \frac{Y(1,w^{(k)},\pi^{(k)})^2}{R-r}  |(v^{(k)})_Q| \\
	&\overset{\eqref{eq:vkmeantozero}}{\les} \frac{o(1)}{R-r} \text{ as } k \to +\infty,
	\end{aligned}
\end{equation}
and
\begin{equation}
\begin{aligned}
	& \frac{1}{\varepsilon_k} \iint_{Q} |a^{(k)}| |(v^{(k)})_Q| [\Phi |\nabla w^{(k)}| + |w^{(k)}| |\nabla \Phi|] \, dx \, ds  \\
	&\quad \leq \underbrace{\frac{C}{\varepsilon_k} \norm{a^{(k)}}_{L^{5m/3}(Q)} |(v^{(k)})_Q|}_{O(1)} \left[ \norm{\phi \nabla w^{(k)}}_{L^2(Q)} + \frac{Y(1,w^{(k)},\pi^{(k)})}{R-r} \right] \\
	&\quad \leq C\gamma^{-1} + \gamma \norm{\phi \nabla w^{(k)}}_{L^2(Q)}^2 + \frac{C}{R-r} \, ,
	\end{aligned}
\end{equation}
 where $\gamma > 0$ is a free parameter and we employ that $\Phi = \phi^2 \leq \phi$. Choosing $0 < \gamma \ll 1$, we may absorb $\gamma \norm{\phi \nabla w^{(k)}}_{L^2(Q)}^2$ into the left-hand side of~\eqref{eq:localenergyineqwkrewritten} and obtain the energy estimates as before.

The arguments concerning the time derivative apply nearly identically, so we next analyze the velocity regularity. As mentioned above, the term $\div F_k = (v^{(k)})_Q \cdot \nabla a^{(k)} / \varepsilon_k$ contributes to the limiting Stokes system satisfied by $(U,P)$:
\begin{equation}
	\label{eq:limitingstokessystem}
\left\lbrace
\begin{aligned}
	\p_t U - \Delta U + \nabla P &= - \div F \\
		\div U &= 0 \, .
		\end{aligned}
		\right.
\end{equation}
Since $\norm{F}_{L^{5m/3}(Q)} \les 1$ and $m > 3$, we may use a standard bootstrapping procedure (see Lemma 2.2 in~\cite{jiasverakselfsim}, for example) to prove
\begin{equation}
	\norm{U}_{C^{\alpha'}_\para(Q_{1/2})} \les_{\alpha'} 1
\end{equation}
for all $\alpha' \in (0,\alpha_1)$, where $\alpha_1 = \min(\alpha_0,1-3/m)$.
By Campanato's characterization of H{\"o}lder spaces, for all $\alpha < \alpha_1$, there exists $\theta_2 = \theta_2(\alpha) \in (0,1/4]$ such that, whenever $\theta \in (0,\theta_2]$,
\begin{equation}
	\label{eq:Uoscest}
	\theta^{\#\zeta} \norm{U - (U)_{Q_\theta}}_{L^{\xi_t}_t L^{\xi_x}_x(Q_\theta)} \leq \frac{\theta^\alpha}{4} \, .
\end{equation}

We now discuss the pressure estimates. We incorporate $(v^{(k)})_Q \cdot \nabla w^{(k)}$ into $f^{(k)}$: \begin{equation}
\begin{aligned}
	f^{(k)} &= (\varepsilon_k w^{(k)} + (v^{(k)})_Q) \cdot \nabla w^{(k)} + a^{(k)} \cdot \nabla w^{(k)} + w^{(k)} \cdot \nabla a^{(k)} \, .
	\end{aligned}
\end{equation}
This new term can be estimated, for example, in the same way as $\varepsilon_k w^{(k)} \cdot \nabla w^{(k)}$. Again, $f^{(k)}$ converges strongly to zero in $L^{\zeta_t}_t L^{\ell_x}_x(Q_{3/4}^\iota)$ as $k \to +\infty$. We define and estimate $(\tilde{w}^{(k)},\tilde{\pi}^{(k)})$ as before, but to accommodate the $\div F_k$ term, we additionally consider $(\hat{w}^{(k)},\hat{\pi}^{(k)})$ solving
\begin{equation}
\left\lbrace
\begin{aligned}
	\p_t \hat{w}^{(k)} - \Delta \hat{w}^{(k)} + \nabla \hat{\pi}^{(k)} &= - \div (\chi F_k) && \text{ in } \R^3 \times (-1,0) \\
		\div \hat{w}^{(k)} &= 0 && \text{ in } \R^3 \times (-1,0)   \\
		\hat{w}^{(k)}(\cdot,-1) &= 0 \, . &&
		\end{aligned}
		\right.
\end{equation}
We have the representation formula
\begin{equation}
    \hat{\pi}^{(k)} = (-\Delta)^{-1} \div \div ( \chi F_k)
\end{equation}
and $\| \hat{\pi}^{(k)} \|_{L^{5m/3}(\R^3 \times (-1,0))} \les \| F_k \|_{L^{5m/3}(\R^3 \times (-1,0))} \les 1$ by Calder{\'o}n-Zygmund estimates. Then
\begin{equation}
    \| \hat{\pi}^{(k)} - [\hat{\pi}^{(k)}]_{B_\theta} \|_{L^{\zeta_t}_t L^{\zeta_x}_x(Q_\theta)} \les \theta^{|\#\zeta| - \frac{3}{m}} \| \hat{\pi}^{(k)} \|_{L^{5m/3}(Q_\theta)} \, ,
\end{equation}
and, in particular,
\begin{equation}
    \label{eq:pihatpressureest}
    \theta^{\#\zeta + 1}  \| \hat{\pi}^{(k)} - [\hat{\pi}^{(k)}]_{B_\theta} \|_{L^{\zeta_t}_t L^{\zeta_x}_x(Q_\theta)} \les \theta^{1-\frac{3}{m}} \, . 
\end{equation}
Finally, consider
\begin{equation}
    \pi^{(k)}_\harm = \pi^{(k)} - [\pi^{(k)}]_{B^\iota} - \tilde{\pi}^{(k)} - \hat{\pi}^{(k)} \, .
\end{equation}
Since $L^{5m/3}(Q) \subset L^{\zeta_t}_t L^{5m/3}_x(Q)$, the regularity of $\pi^{(k)}_\harm$ is dealt with as before to obtain~\eqref{eq:piharmbound}. By combining~\eqref{eq:pihatpressureest} and~\eqref{eq:piharmbound} with the strong convergence of $\tilde{\pi}^{(k)}$ to zero, we conclude that whenever $\alpha < \alpha_1$, there exists $\theta_3 = \theta_3(\alpha) \in (0,1/4)$ such that, for all $\theta \in (0,\theta_3]$, we have
\begin{equation}
    \label{eq:limitingpressureestagain}
	\limsup_{k \to +\infty} \theta^{\#\zeta+1} \norm{\pi^{(k)} - [\pi^{(k)}]_{B_\theta}}_{L^{\zeta_t}_t L^{\zeta_x}_x(Q_\theta)} \leq \frac{\theta^\alpha}{4}.
\end{equation}
Let $\theta_1 = \min(\theta_2,\theta_3)$. 
Combining~\eqref{eq:limitingpressureestagain} with the strong convergence $w^{(k)} \to U$ in $L^{\xi_t}_t L^{\xi_x}_x(Q_{3/4})$ and the H{\"o}lder estimate~\eqref{eq:Uoscest} on $U$, we have
\begin{equation}
	\limsup_{k \to +\infty} Y_\osc(\theta_1,w^{(k)},\pi^{(k)}) \leq \frac{\theta_1^\alpha}{2},
\end{equation}
which, upon setting $\bar{\theta} = \theta_1$, yields the desired contradiction.
\end{proof}

\begin{proof}[Proof of Lemma~\ref{lem:iteratedestimates}]
Without loss of generality, $R = 1$ and $z_0 = 0$. Let $\varepsilon > 0$ and
\begin{equation}
	Y^\iota(1) + \norm{a}_{\mathbf{M}_m(Q^\iota)} \leq \varepsilon.
\end{equation}
Recall that $\norm{a}_{\mathbf{M}_m(Q^\iota_r)} \leq \norm{a}_{\mathbf{M}_m(Q^\iota)}$ when $r \in (0,1)$.

\textit{1. Boundary case}.
Let $\alpha \in (0,\alpha_0)$. If $\varepsilon \leq \varepsilon_0^+$, then Lemma~\ref{lem:boundarymorrey} implies
\begin{equation}
	Y^+(\theta_0^+) \leq (\theta_0^+)^\alpha Y^+(1),
\end{equation}
where $\varepsilon_0^+,\theta_0^+ \in (0,1)$ are the constants in Lemma~\ref{lem:boundarymorrey}.
In particular,
\begin{equation}
\begin{aligned}
	\theta_0^+ Y^+(\theta_0^+) + \norm{a}_{\mathbf{M}_m(Q^+_{\theta_0^+})} &\leq (\theta_0^+)^{\alpha+1} Y^+(1) + \norm{a}_{\mathbf{M}_m(Q^+)} \\
	&\leq Y^+(1) + \norm{a}_{\mathbf{M}_m(Q^+)} \\
	&\leq \varepsilon_0^+ \, ,
	\end{aligned}
\end{equation}
and therefore we may apply a rescaled version of Lemma~\ref{lem:boundarymorrey} at scale $\theta_0^+$. Iterating in this fashion, we have
\begin{equation}
	Y^+((\theta_0^+)^k) \leq (\theta_0^+)^{k\alpha} Y^+(1)
\end{equation}
for all $k \in \N$. Filling in the intermediary scales, we obtain
\begin{equation}
	Y^+(r) \les_\alpha r^\alpha Y^+(1)
\end{equation}
for all $r \in (0,1)$. Since $Y^+_\osc \leq 2Y^+$, we also have
\begin{equation}
	Y^+_\osc(r) \les_\alpha r^\alpha Y^+(1),
\end{equation}
as desired.

\textit{2. Interior case}.
Let $\beta \in [-1,0)$. First, we prove a Morrey estimate as in the boundary case. If $\varepsilon \leq \varepsilon_0$, then Lemma~\ref{lem:interiormorrey} (with $\alpha = \beta$ in the statement) and reasoning as above yields
\begin{equation}
	\label{eq:wefilledintheintermediaryscales}
	Y(r) \les_\beta r^{\beta} Y(1).
\end{equation}

Next, we estimate the oscillation, which is special to $m > 3$. Let $\alpha \in (0,\alpha_1)$. Recall that $\alpha_1 = \min(1-3/m,\alpha_0)$. In~\eqref{eq:wefilledintheintermediaryscales}, we choose
\begin{equation}
    \label{eq:choiceofbetagamma}
    \gamma = (\alpha+\alpha_1)/2 \in (\alpha,\alpha_1), \quad \beta = \gamma-\alpha_1 \in [-1,0).
\end{equation}
We choose $0 < \varepsilon \ll_\alpha 1$ such that~\eqref{eq:wefilledintheintermediaryscales} implies
\begin{equation}
	\label{eq:smallnessoverscales}
	r |(v)_{Q_r}| + r Y_\osc(r) \leq \varepsilon_1/3
\end{equation}
for all $r \in (0,1]$, where $\varepsilon_1$ is as in Lemma~\ref{lem:interiorcampanato} (with $\alpha = \gamma$ in the statement). We also choose $\varepsilon \leq \varepsilon_1/3$. This means that the hypotheses of Lemma~\ref{lem:interiorcampanato} are satisfied at each scale $r \in (0,1]$, and we may freely apply it.
We will show
\begin{equation}
	\label{eq:Yosciteration}
	Y_\osc(\theta_1^k) \leq 3 \theta_1^{k\alpha} Y(1)
\end{equation}
for all $k \geq 0$. With this in hand, one may fill in the remaining scales:
\begin{equation}
	Y_\osc(r) \les_\alpha r^\alpha Y(1),
\end{equation}
for all $r \in (0,1)$, as desired.

The base case $Y_{\rm osc}(1) \leq 3 Y(1)$ of~\eqref{eq:Yosciteration} is obvious. Assume~\eqref{eq:Yosciteration} is valid for $k = k_0 \geq 0$. Then a rescaled version of Lemma~\ref{lem:interiorcampanato} (with $\alpha = \gamma$ in the statement) and the inductive hypothesis yield
\begin{equation}
\begin{aligned}
	Y_\osc(\theta_1^{k_0+1}) &\leq \theta_1^{\gamma} \left( Y_\osc(\theta_1^{k_0}) + |(v)_{Q_{\theta_1^{k_0}}}| \times \theta_1^{k_0(1-3/m)} \norm{a}_{L^{5m/3}(Q_{\theta_1^{k_0}})}  \right)\\
	 &\leq 3 \theta_1^{\gamma} \theta_1^{k_0 \alpha} Y(1) + C_1 \theta_1^{\gamma} \theta_1^{k_0 (\gamma-\alpha_1) } Y(1) \times \theta_1^{k_0 \alpha_1} \norm{a}_{L^{5m/3}(Q)} \\
	 &\leq 3 \theta_1^{(k_0+1)\alpha} Y(1) \times (\theta_1^{\gamma-\alpha} + C_1 \varepsilon/3) \\
	 &\leq 3 \theta_1^{(k_0+1)\alpha} Y(1)
	\end{aligned}
\end{equation}
when $0 < \varepsilon \ll_\alpha 1$. In the second inequality we used~\eqref{eq:wefilledintheintermediaryscales} with the choice of $(\gamma,\beta)$ in~\eqref{eq:choiceofbetagamma} and also $\alpha_1 \leq 1-3/m$.
This completes the induction and the proof.
\end{proof}

\begin{proof}[Proof of Proposition~\ref{pro:epsilonreg}]
 Let $\varepsilon > 0$ and
\begin{equation}
	\norm{v}_{L^{\xi_t}_t L^{\xi_x}_x(Q^\iota)} + \norm{q}_{L^{\zeta_t}_t L^{\zeta_x}_x(Q^\iota)} + \norm{a}_{\mathbf{M}_m(Q^\iota)} \leq \varepsilon.
\end{equation}
Let $\bar{\alpha} = \alpha_1/2$.

\textbf{1. Interior case}.
When $0 < \varepsilon \ll 1$, we may apply Lemma~\ref{lem:iteratedestimates} with $\alpha = \bar{\alpha}$ on $Q_{1/2}(z_0)$ for all $z_0 \in \overline{Q_{1/2}}$. The proof is completed by Campanato's characterization of H{\"o}lder continuity.

\textbf{2. Boundary case}. Ultimately, we wish to measure the oscillation of $v$ in $Q_r(z_0) \cap Q^+$ for all $z_0 \in \overline{Q_{1/2}^+}$ and $r \in (0,1/4]$. We denote by $d(z_0)$ the $x_3$-component of the space-time point $z_0$. For convenience, we write
\begin{equation}
	E = \norm{v}_{L^{\xi_t}_t L^{\xi_x}_x(Q^+)} + \norm{q}_{L^{\zeta_t}_t L^{\zeta_x}_x(Q^+)}.
\end{equation}

\emph{2a. Interior balls away from boundary strip} ($d(z_0) \geq 1/4$).
If $0 < \varepsilon \ll 1$, we may apply Lemma~\ref{lem:iteratedestimates} with $\alpha = \bar{\alpha}$ on $Q_{1/4}(z_0)$. Hence, for all $r \in (0,1/4]$, we have
\begin{equation}
	Y_\osc(z_0,r) \les r^{\bar{\alpha}} E \, .
\end{equation}

\emph{2b. Boundary balls} ($d(z_0) = 0$). If $0 < \varepsilon \ll 1$, we may apply Lemma~\ref{lem:iteratedestimates} with $\alpha = \bar{\alpha}$ on $Q^+_{1/2}(z_0)$. This yields, for all $r \in (0,1/2]$,
\begin{equation}
	\label{eq:boundaryestyplus}
	Y^+(z_0,r) + Y^+_\osc(z_0,r) \les r^{\bar{\alpha}} E \, .
\end{equation}

\emph{2c. Interior balls in the boundary strip} ($d(z_0) \in (0,1/4)$ and $r \leq d(z_0)$). Consider the case when the balls just touch the boundary. We control these balls via \emph{2b}:
\begin{equation}
    \label{eq:greatnameforareference}
	Y(z_0,d(z_0)) \les Y^+(z_0-d(z_0)e_3,2d(z_0)) \les d(z_0)^{\bar{\alpha}} E.
\end{equation}
That is, we double the radius of the ball and shift its center to the boundary.
If $0 < \varepsilon \ll 1$ (independently of $z_0$), we may apply Lemma~\ref{lem:iteratedestimates}, whose  hypotheses are guaranteed by~\eqref{eq:greatnameforareference}, on $Q_{d(z_0)}(z_0)$ with $\alpha = \bar{\alpha}$  to obtain that, for all $r \in (0,d(z_0)]$,
\begin{equation}\label{e.Yz0r}
	 Y_{\rm osc}(z_0,r) \les (r/d(z_0))^{\bar{\alpha}} Y(z_0,d(z_0)) \overset{\eqref{eq:greatnameforareference}}{\les} r^{\bar{\alpha}} E.
\end{equation}

\emph{2d. Balls intersecting the boundary} ($d(z_0) \in (0,1/4)$ and $r > d(z_0)$). Again, we can reduce to \emph{2b}. Specifically,
\begin{equation}
	r^{\#\xi} \norm{v - (v)_{Q_r(z_0) \cap Q^+}}_{L^{\xi_t}_t L^{\xi_x}_x(Q_r(z_0) \cap Q^+)} \les Y^+(z_0 - d(z_0) e_3,2r) \les r^{\bar{\alpha}} E \, .
\end{equation}

In summary, when $0 < \varepsilon \ll 1$, we have
\begin{equation}
	r^{\#\xi} \norm{v - (v)_{Q_r(z_0) \cap Q^+}}_{L^{\xi_t}_t L^{\xi_x}_x(Q_r(z_0) \cap Q^+)} \les r^{\bar{\alpha}} E
\end{equation}
for all $r \in (0,1/4]$ and $z_0 \in \overline{Q^+_{1/2}}$. Note that $c_1 r^5 \leq |Q_r(z_0) \cap Q^+| \leq c_2 r^5$. Since also $\norm{v}_{L^1(Q^+)} \les E$, Campanato's criterion yields
\begin{equation}
	\norm{v}_{C^{\bar{\alpha}}_\para(Q^+_{1/2})} \les E,
\end{equation}
as desired.
\end{proof}

\begin{proof}[Proof of Proposition~\ref{pro:epsilonregcritical}]
This is similar to the proof of Proposition~\ref{pro:epsilonreg} but simpler in that one no longer needs to estimate the oscillation or apply Campanato's criterion. We omit the details.
\end{proof}

\section{Proof of localized smoothing}

\begin{proof}[Proof of Theorem~\ref{thm:localsmoothinghalfspace} and Remark~\ref{rmk:subcriticalrefinements}]\

\emph{Step 0. Reduction to interior and boundary cases}. First, we argue that it suffices to demonstrate the theorem when $d(x_0) \geq 3$ (interior case) or $d(x_0) = 0$ (boundary case). The proof is by a covering argument:

(i) If $d(\Omega_1(x_0)) > 1$, one covers $\overline{\Omega_1(x_0)}$ with $O(1)$ balls $B_{1/6}(y_0)$ with centers $y_0 \in \overline{\Omega_1(x_0)}$. Notice that $B_{1/2}(y_0)$ remains inside $\Omega_3(x_0)$. Then we apply the rescaled interior case of the theorem on each $B_{1/2}(y_0)$.

(ii) If $d(\Omega_1(x_0)) \leq 1$, one covers the strip $\overline{\Omega_1(x_0)} \cap \{ d(x) \leq 1/12 \}$ by $O(1)$ boundary balls $B^+_{1/6}(y_0)$ with centers $y_0 = (y_0',0)$ satisfying $|y_0'- x_0'| \leq 1$. With this choice, $B^+_{1/2}(y_0)$ remains inside $\Omega_3(x_0)$. Away from the strip, one covers $\overline{\Omega_1(x_0)} \cap \{ d(x) \geq 1/12 \}$ by $O(1)$ balls $B_{1/36}(y_0)$ with $y_0 \in \overline{\Omega_1(x_0)} \cap \{ d(x) \geq 1/12 \}$, so that $B_{1/12}(y_0)$ remains inside $\Omega_3(y_0)$. Then we apply the rescaled interior and boundary cases of the theorem on the above interior and boundary balls.

Below, we write only the boundary case $x_0 \in \p \R^3_+$. The interior case is nearly identical.

\emph{Step 1. Small data}.
 Since $u$ is a local energy solution on $\R^3_+ \times (0,T)$, it satisfies the local energy and pressure estimates in Lemma~\ref{lem:localenergyestslemma} on $\R^3_+ \times (0,\bar{S}_1)$. 
Let $a_0$ be a divergence-free localization\footnote{Write $a_0 = \phi u_0 + w_0$, where $\phi$ is an appropriate smooth cut-off function and $w_0$, which corrects the non-zero divergence, is obtained using Bogovskii's operator~\cite{bogovskii}, see also Galdi's book~\cite[Section III.3]{galdi}.} with $a_0 \equiv u_0$ on $B_{2}^+(x_0)$, supported on $\overline{B_3^+(x_0)}$, with vanishing normal trace on $\p_{\rm flat} B_3^+(x_0)$, and such that
\begin{equation}
    \label{eq:a0getssmallerinnorm}
    \norm{a_0}_{L^p(\R^3_+)} \les_p \norm{u_0}_{L^p(B^+_3)} \text{ for all } p \in (1,m].
\end{equation}
 Proposition~\ref{pro:Lmsoltheory} guarantees that $a_0$ generates a mild solution $a$ of the Navier-Stokes equations satisfying the estimates~\eqref{eq:linearestimatem}-\eqref{eq:linearestimatelebesguem} on $\R^3_+ \times (0,T_m)$ with $u = a$. In particular, by Lemma~\ref{lem:acontrolledinMm}, there exists $0 < N_0 \ll 1$ satisfying that, whenever $N \leq N_0$, we have
\begin{equation}
	\norm{a}_{\mathbf{M}_m(B_{2}^+(x_0) \times (0,4))} \leq \varepsilon_{\rm CKN}/2.
\end{equation}
Let $N \leq N_0$. Let $z$ denote the pressure associated to $a$. Then $(a,z)$ is also a suitable Leray-Hopf solution with $\norm{a_0}_{L^2(\R^3_+)} \les M$; in particular, it is a local energy solution, so it satisfies the energy and pressure estimates in Lemma~\ref{lem:localenergyestslemma} on $\R^3_+ \times (0,\bar{S}_1)$ after possibly decreasing $\bar{S}_1$. Let
\begin{equation}
    \label{eq:vqdef}
	v = u - a, \quad q = p - z.
\end{equation} 
Then $(v,q)$ is a suitable weak solution of the perturbed Navier-Stokes equations~\eqref{eq:perturbednse} on $B_{2}^+(x_0) \times (0,\bar{S}_1)$ with coefficient $a$,\footnote{A technical point is to `transfer' the local energy inequality satisfied by the weak solution $u$ and the local energy \emph{equality} satisfied by the strong solution $a$ to the perturbed local energy inequality for $v$. This type of argument is well known from the proof of weak-strong uniqueness.} and since $u(\cdot,t),a(\cdot,t) \to u_0$ in $L^2(B^+_2(x_0))$ as $t \to 0^+$, $v$ satisfies
\begin{equation}\label{e.cvtzero}
	\norm{v(\cdot,t)}_{L^2(B_{2}^+(x_0))} \to 0 \text{ as } t \to 0^+ \, .
\end{equation}
Next, we use the estimates in Lemma~\ref{lem:localenergyestslemma} for the local energy solutions $u$ and $a$, the definition~\eqref{eq:vqdef} of $(v,q)$, and the triangle inequality to demonstrate that, for all $S \in (0,\bar{S}_1]$, we have
\begin{equation}
	\norm{v}_{L^{\xi_t}_t L^{\xi_x}_x(B^+_{2}(x_0) \times (0,S))} + \norm{q-[q]_{B_2^+(x_0)}}_{L^{\zeta_t}_t L^{\zeta_x}_x(B^+_{2}(x_0) \times (0,S))} \les S^{\frac{1}{\xi_t} - \frac{1}{p}} M^2 + S^{\frac{1}{\zeta_t} - \frac{3}{4}} M^2,
\end{equation}
 whenever $\#(\xi_x,p) = -3/2$ and $M \geq 1$. In particular, the exponents on the right-hand side may be taken positive. Therefore, we choose $S \in (0,S_1]$ satisfying
 \begin{equation}
    \label{eq:defofbarsforsmallN}
	S = O(1) M^{-O(1)} N^{O(1)} \,
 \end{equation}
 where $M \geq M_{\rm univ} > 0$ guarantees that $S \leq 1$, and, with $\bar{S} = \min(S,T)$,
 \begin{equation}
    \label{eq:thisguaranteetheckn}
	\norm{v}_{L^{\xi_t}_t L^{\xi_x}_x(B^+_{2}(x_0) \times (0,\bar{S}))} + \norm{q-[q]_{B_2^+(x_0)}}_{L^{\zeta_t}_t L^{\zeta_x}_x(B^+_{2}(x_0) \times (0,\bar{S}))} \leq \varepsilon_{\rm CKN} N/2.
 \end{equation}
 Thanks to~\eqref{e.cvtzero}, we may extend $(v,q)$ backward-in-time by zero as a suitable weak solution to~\eqref{eq:perturbednse} on $B_{2}^+ \times  (\bar{S}_1-4,\bar{S}_1)$. 
 Hence,~\eqref{eq:thisguaranteetheckn} guarantees that $(v,q)$ satisfies the $\varepsilon$-regularity criterion in Proposition~\ref{pro:epsilonreg} ($m>3$) or Proposition~\ref{pro:epsilonregcritical} ($m=3$) on $B_2^+(x_0) \times (\bar{S}-4,\bar{S})$. When $m > 3$, we have
 \begin{equation}
	\norm{v}_{C^{\bar{\alpha}}_\para(B^+(x_0) \times (\bar{S}-1,\bar{S}))} \les N.
 \end{equation}

 \emph{Step 2. Concluding for $m=3$.}
 When $m = 3$, we have a subcritical Morrey estimate rather than a $C^\alpha$ estimate. To demonstrate the critical time-weighted $L^\infty$ smoothing in Theorem~\ref{thm:localsmoothinghalfspace}, we will defer to the $\varepsilon$-regularity theory for the solution $(u,p)$ of the non-perturbed Navier-Stokes equations.  At this point, it will be convenient to specialize to $x_0 = 0$ without loss of generality.

The subcritical Morrey estimates (with $\alpha = -1/2$ in the statement of Proposition~\ref{pro:epsilonregcritical}) are
\begin{equation}
	\label{eq:Morreyestint}
	\sup_{z',R} RY(z',R,v,q) \les R^{1/2} N
\end{equation}
where $z' = (x',t')$, $|x'| < 1$, and $R < 1$ satisfy $Q_R(z') \subset Q_2^+((0,\bar{S}))$, and on the boundary,
\begin{equation}
	\label{eq:Morreyestbd}
	\sup_{z',R} RY^+(z',R,v,q) \les R^{1/2} N,
\end{equation}
where $|x'| < 1$, $d(x') = 0$, $R < 1$, and $Q^+_R(z') \subset Q_2^+((0,\bar{S}))$.

 We also require estimates on $(a,z)$. Since $L^3$ embeds into $L^2_{\rm uloc}$, these can be obtained from Lemma~\ref{lem:localenergyestslemma} and the scaling symmetry:
 \begin{multline}
 	\label{eq:azest}
	\sup_{\gamma \in (0,+\infty)} \sup_{x_1 \in \overline{\R^3_+}} \gamma^{\# \xi+1} \norm{a}_{L^{\xi_t}_t L^{\xi_x}_x(\Omega_{ \gamma }(x_1) \times (0,\gamma^2))}\\
	+ \gamma^{\# \zeta+2} \norm{z-[z]_{\Omega_{ \gamma}(x_1)}}_{L^{\zeta_t}_t L^{\zeta_x}_x(\Omega_{\gamma}(x_1) \times (0,\gamma^2))} \les N \, ,
 \end{multline}
 where $N \leq N_0$ is small enough to ensure that the strong $L^3$ solution exists globally and remains small.

 Let $t' \in (0,\bar{S})$ and $x'' \in B^+$. Recall that $\bar{S} \leq 1$. We consider a covering like the one in the proof of Proposition~\ref{pro:epsilonreg}: If $d(x'') > \sqrt{t'}/8$, then we consider the ball $Q_{\sqrt{t'}/8}(x',t')$ with $x' = x''$. If $d(x'') \leq \sqrt{t'}/8$, then we consider the half-ball $Q_{\sqrt{t'}/4}^+(x',t')$ with $x'=x''-d(x'')e_3$ (that is, the half-ball is spatially centered at the projection of $x''$ onto the flat boundary). In the former case, we appeal to~\eqref{eq:Morreyestint} and~\eqref{eq:azest}, whereas in the latter case, we appeal to~\eqref{eq:Morreyestbd} and~\eqref{eq:azest}. We write only the latter case. We have
 \begin{equation}
 	\label{eq:upNbound}
	\frac{\sqrt{t'}}{4} Y^+(z',\sqrt{t'}/4,u,p) \les N.
 \end{equation}
By choosing $N_0 \ll 1$, the right-hand side of~\eqref{eq:upNbound} can be made to satisfy the $\varepsilon$-regularity criterion in Proposition~\ref{pro:epsilonreg} for the non-perturbed Navier-Stokes equations~\eqref{e.nse}. Hence, $u$ is H{\"o}lder continuous in $Q^+_{\sqrt{t'}/8}(x',t')$,
\begin{equation}
	t'^{1/2} \| u \|_{L^\infty(Q^+_{\sqrt{t'}/8}(x',t'))} \les N
\end{equation}
and, in particular, 
\begin{equation}
	t'^{1/2} |u(x'',t')| \les N.
\end{equation}
Since $(x'',t') \in B^+ \times (0,\bar{S})$ was arbitrary, the proof for $m=3$ is complete.

 \emph{Step 3. Large data ($m > 3$)}. Without loss of generality, $x_0 = 0$ and $N \geq 4N_0$. We perform the following rescaling procedure. For $\lambda \in (0,1]$, we define the rescaled solutions
 \begin{equation}
	u_\lambda(x,t) = \lambda u(\lambda x,\lambda^2 t), \quad p_\lambda(x,t) = \lambda^2 p(\lambda x, \lambda^2 t).
 \end{equation}
 Let $u_{0,\lambda} = u_\lambda(\cdot,0)$. Then
 \begin{equation}
    \label{eq:illbeequaltoN0}
	\norm{u_{0,\lambda}}_{L^m(B^+_{3/\lambda})} \leq \lambda^{1-\frac{3}{m}} N.
 \end{equation}
 When
 \begin{equation}
    \label{eq:iaffordsomething}
	\lambda = \left( \frac{N_0}{N} \right)^{\frac{1}{1-\frac{3}{m}}},
 \end{equation}
the quantity on the right-hand side of~\eqref{eq:illbeequaltoN0} is equal to $N_0$. The rescaling also disrupts the local energy norm. That is, we have
\begin{equation}
	\norm{u_{0,\lambda}}_{L^2_{\uloc}(\R^3_+)} \leq \lambda^{-\frac{3}{2}} M =: M'.
\end{equation}
Next, we cover $\overline{B^+_{2/\lambda}}$ with balls $B_1(x_1)$ and $B_1^+(x_1)$ such that $x_1 \in \overline{B^+_{2/\lambda}}$. This requires $O(\lambda^{-3})$ balls, each with $O(1)$ intersections. In the balls $B_3(x_1)$ and $B_3^+(x_1)$, which are contained in $B^+_{3/\lambda}$ since $1/\lambda \geq 4$, we apply the small-data smoothing demonstrated above, which is afforded by~\eqref{eq:illbeequaltoN0} and~\eqref{eq:iaffordsomething}. To complete the $p = m$ smoothing estimate, we sum
\begin{equation}
    \sup_{t \in I} \int_{B_1^\iota(x_1)} |u_\lambda(x,t)|^m \, dx \les \int_{B_3^\iota(x_1)} |u_{0,\lambda}|^m \, dx
\end{equation}
over the covering of balls, where $\iota \in \{ {\rm int}, {\rm bd} \}$ depending on $x_1$, and $I$ is the time interval from the application of the small-data smoothing on each ball. For the $p = +\infty$ smoothing estimate, there is no need to sum. For $p \in (m,+\infty)$, we interpolate between the $p=m$ and $p=+\infty$ smoothing estimates. Finally, undoing the rescaling yields the theorem, where now $S = O(1) M^{-O(1)} N^{-O(1)}$ for $M \geq M_{\rm univ}$ and $N \geq 4N_0$ (compare to~\eqref{eq:defofbarsforsmallN}, which is valid for $N \leq N_0$). \end{proof}

\begin{proof}[Proof of Theorem~\ref{thm:localsmoothinglocal}]
Without loss of generality, we may replace $\zeta_t$ in the statement of Theorem~\ref{thm:localsmoothinglocal} by $\zeta_t / (1- \nu \zeta_t)$, where $0 < \nu \ll 1$. That is, we assume
\begin{equation}
	\label{eq:muhassumptiononuandp}
	 \| u \|_{L^\infty_t L^2_x(\Omega_3(x_0) \times (0,T))} + \| \nabla u \|_{L^2_{t,x}(\Omega_3(x_0) \times (0,T))} + \| p \|_{L^{\zeta_t / (1- \nu \zeta_t)}_t L^{\zeta_x}_x(\Omega_3(x_0) \times (0,T))} \leq M.
\end{equation}
Then the proof is identical to the proof of Theorem~\ref{thm:localsmoothinghalfspace} up to a minor adjustment, namely, that~\eqref{eq:muhassumptiononuandp} is used to control $(u,p)$ rather than the local energy estimates in Lemma~\ref{lem:localenergyestslemma}. The strong solution $a$ is controlled in the same way as before, so
 by the triangle inequality, we can still obtain
\begin{equation}
	\norm{v}_{L^{\xi_t}_t L^{\xi_x}_x(B^+_{2}(x_0) \times (0,S))} + \norm{q-[q]_{B_2^+(x_0)}}_{L^{\zeta_t}_t L^{\zeta_x}_x(B^+_{2}(x_0) \times (0,S))} \les_{\nu} S^{\nu} M^2 \, .
\end{equation}
\end{proof}

\section{Proof of concentration}
The goal of this subsection is to prove Theorem~\ref{thm:localizedconcentration}. First, it is necessary to introduce some notation.
Let $\mathcal{O}$ be a domain in $\R^3$. We define the following scale-invariant quantities, which will be used throughout our work: for $x\in\overline{\mathcal{O}}$ and $r\in(0,\infty)$,
\begin{align}
\label{Adefx}
A(u,r;\mathcal{O},x,t):=\ &\sup_{t-r^2<s<t}\frac{1}{r}\int\limits_{B_r(x)\cap\mathcal{O}} | u(y,s)|^{2} dy,\\
\label{Edefx}
E(u,r;\mathcal{O},x,t):=\ &\frac{1}{r} \int\limits_{t-r^2}^t\int\limits_{B_r(x)\cap\mathcal{O}} |\nabla u|^{2} dy\, ds,\\
\label{scaledpressurelambda32x}
D_{\frac32}(p,r;\mathcal{O},x,t):=\ &\frac{1}{r^{2}}\int\limits_{t-r^2}^t\int\limits_{B_r(x)\cap\mathcal{O}} |p-[p(\cdot,s)]_{B_r(x)\cap\mathcal{O}}|^{\frac32} dy \, ds,\\
\label{scaledpressurezeta}
D_{\zeta_{x},\zeta_{t}}(p,r;\mathcal{O},x,t):=\ &\frac{1}{r^{(\frac{3}{2}-\delta_{0})\zeta_{t}}}\int\limits_{t-r^2}^t\Big(\int\limits_{B_r(x)\cap\mathcal{O}} |p-[p(\cdot,s)]_{B_r(x)\cap\mathcal{O}}|^{\zeta_{x}} dy\Big)^{\frac{\zeta_{t}}{\zeta_{x}}}ds
\end{align}

Here, as in the rest of this paper, 
$(\zeta_{x},\zeta_{t})$ is as in Section 2 and
\begin{equation}
    \notag
    [f]_{\mathcal{O}}:= \frac{1}{ |\mathcal{O}|} \int\limits_{\mathcal{O}} f(y) dy.
\end{equation}
Below, we will often take $(x,t)=(0,0)$. 
In this case, we have the following lighter notation:
\begin{equation}
\label{Adef}
    \begin{aligned}
&A(u,r; \mathcal{O}):=A(u,r;\mathcal{O},0,0), &&E(u,r; \mathcal{O}):=E(u,r;\mathcal{O},0,0),\\
&D_{\frac32}(p,r;\mathcal{O}):=D_{\frac32}(p,r;\mathcal{O},0,0),
 &&D_{\zeta_{x},\zeta_{t}}(p,r;\mathcal{O}):=D_{\zeta_{x},\zeta_{t}}(p,r;\mathcal{O},0,0).
    \end{aligned}
\end{equation}

The main ingredients in proving Theorem~\ref{thm:localizedconcentration} is Theorem \ref{thm:localsmoothinglocal}, combined with a rescaling argument and the following key proposition (which we now state).
\begin{proposition}\label{scaleinvarestnotcentre}
Let $(u,p)$ be a suitable weak solution of~\eqref{e.nse} on $Q_4^+$, in the sense of Definition~\ref{def.sws}, satisfying 
\begin{equation}\label{boundedenergyonescalepro}
 \| \nabla u \|_{L^2_{t,x}(Q_4^+)} + \| p \|_{L^{\frac{3}{2}}_{t,x}(Q_4^+)} \leq M_{0}
\end{equation}
and
\begin{equation}\label{boundedkineticenergyallscalespro}
\sup_{(y_0,s_{0})\in Q_3^+}\sup_{0<r\leq 1}A(u,r;B_4^+,y_0,s_0)\leq A_{0}.
\end{equation}
Then the above assumptions imply that
 \begin{align}\label{scaledenergypressureallscalesuncentre}
 \begin{split}
 \sup_{(y_0,s_{0})\in Q^+}\sup_{0<r\leq 1} \left\lbrace E(u,r;B_4^+,y_0,s_0)+D_{\zeta_{x},\zeta_{t}}(p,r;B_4^+,y_0,s_0) \right\rbrace \leq M(M_{0},A_{0}).
 \end{split}
 \end{align}
\end{proposition}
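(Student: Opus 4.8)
### Proof proposal for Proposition~\ref{scaleinvarestnotcentre}

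\textbf{Overview.} The goal is a uniform-over-all-scales-and-centers bound on the scaled dissipation $E$ and the fractional scaled pressure $D_{\zeta_x,\zeta_t}$, starting only from a \emph{one-scale} energy-plus-pressure bound~\eqref{boundedenergyonescalepro} and a uniform-over-scales bound on the scaled kinetic energy $A$~\eqref{boundedkineticenergyallscalespro}. The plan is a bootstrapping / iteration argument in the spirit of the Caffarelli--Kohn--Nirenberg theory combined with the half-space fractional pressure estimates alluded to in point (3) of the introduction (the estimates of~\cite{CK18} reproved via~\cite{MMPEnergy}). The key point to keep track of throughout is that in the half space $p \not\simeq u \otimes u$, so the pressure must be split into a \emph{local} part, controlled by $u\otimes u$ on a slightly larger ball by Calder\'on--Zygmund theory, and a \emph{harmonic-type correction}, controlled by the pressure on a larger ball via interior estimates for (Stokes-)harmonic functions; the low time-integrability forces us to work with the exponents $(\zeta_x,\zeta_t)$ rather than $L^{3/2}_{t,x}$.

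\textbf{Step 1: Local energy inequality and the dissipation bound.} Fix a parabolic half-ball $Q_r^+(y_0,s_0) \subset Q_4^+$. Apply the local energy inequality~\eqref{eq:localenergyineq} (here with $a \equiv 0$, since $(u,p)$ solves the genuine Navier--Stokes system) with a standard cutoff $\Phi$ adapted to $Q_r^+(y_0,s_0)$ vs $Q_{2r}^+(y_0,s_0)$. The terms on the right are estimated as usual: the $|u|^2(\p_t+\Delta)\Phi$ term by $A(u,2r;B_4^+,y_0,s_0) \leq A_0$; the triple-product term $\int |u|^2 u\cdot\nabla\Phi$ by H\"older and interpolation (Gagliardo--Nirenberg / the $L^{10/3}_{t,x}$ bound coming from $A$ and $E$) against itself, absorbing a small multiple of $E$ into the left; and the pressure term $\int p\, u\cdot\nabla\Phi$ using the pressure decomposition of Step~2 below. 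This yields
\begin{equation}
	E(u,r;B_4^+,y_0,s_0) \lesssim A_0 + (\text{pressure contribution at scale } 2r),
\end{equation}
so the dissipation bound follows \emph{once the pressure is controlled}. Note $A_0$ already controls $A$ at \emph{all} scales, so there is no need to iterate the energy inequality itself — the only genuinely iterative object is the pressure.

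\textbf{Step 2: The half-space pressure split and its iteration (the main obstacle).} This is the heart of the proof and the step I expect to be hardest. On a half-ball $B_{2r}^+(y_0,s_0)$ decompose $p = p_{\rm loc} + p_{\rm harm}$, where $p_{\rm loc}$ solves the appropriate (Helmholtz/pressure) problem with data $\operatorname{div}\operatorname{div}(u\otimes u)$ localized to $B_{3r}^+$, and $p_{\rm harm}$ absorbs the rest. By Calder\'on--Zygmund-type estimates (interior, and at the flat boundary via the formulas of~\cite{MMPEnergy}), $p_{\rm loc}$ is controlled in $L^{\zeta_t}_t L^{\zeta_x}_x$ by $\|u\|_{L^{2\zeta_t}_tL^{2\zeta_x}_x}^2$ on $B_{3r}^+$, which in turn is controlled by $A_0$ and $E$ via interpolation (the exponents in~\eqref{eq:zetadef} were precisely chosen so that $2\zeta_x,2\zeta_t$ interpolate between the energy space and $L^2_{t,x}$; cf. footnote~\ref{foot.num}). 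The harmonic-type part $p_{\rm harm}$ satisfies a Stokes system with zero forcing in $B_{3r/2}^+$, hence enjoys interior/boundary Caccioppoli and reverse-H\"older estimates, giving a \emph{decay} estimate of the form
\begin{equation}
	D_{\zeta_x,\zeta_t}(p_{\rm harm},\rho; B_4^+,y_0,s_0) \lesssim (\rho/r)^{\sigma}\, D_{\zeta_x,\zeta_t}(p,r;B_4^+,y_0,s_0)
\end{equation}
for some $\sigma > 0$ and all $\rho \le r/2$ — here the low time integrability (the exponent $\tfrac32-\delta_0$ in~\eqref{scaledpressurezeta} rather than $2$) is exactly what makes the scaling work out, and this is where the $O(t^{-3/4-\delta})$ pressure singularity discussed around~\eqref{e.pressres} must be accommodated. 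Combining, one obtains an excess-decay inequality
\begin{equation}
	\mathcal{E}(\rho) \lesssim (\rho/r)^{\sigma}\mathcal{E}(r) + (\rho/r)^{-C}\bigl(A_0 + E(u,r)\bigr),
\end{equation}
where $\mathcal{E}(r) := D_{\zeta_x,\zeta_t}(p,r;B_4^+,y_0,s_0)$. A standard iteration lemma (e.g.~\cite[Lemma~6.1]{giustibook}) then upgrades this to a uniform bound $\sup_{0<r\le1}\mathcal{E}(r) \lesssim \mathcal{E}(1) + A_0 + \sup E$.

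\textbf{Step 3: Closing the loop and starting the iteration.} Steps 1 and 2 give, schematically, $E(r) \lesssim A_0 + \mathcal{E}(2r)$ and $\mathcal{E}(r) \lesssim \mathcal{E}(1) + A_0 + \sup_{r'\geq r}E(r')$; to close this we need a finite starting value for $\mathcal{E}$ at a unit scale and for $E$ at a unit scale. These come from the one-scale hypothesis~\eqref{boundedenergyonescalepro}: $\|\nabla u\|_{L^2_{t,x}(Q_4^+)}\le M_0$ bounds $E$ at scale $\sim 1$, and $\|p\|_{L^{3/2}_{t,x}(Q_4^+)}\le M_0$ bounds $\mathcal{E}$ at scale $\sim1$ after using that $Q_3^+$-centered unit half-balls are comparable (since $L^{3/2}_{t,x}\subset L^{\zeta_t}_tL^{\zeta_x}_x$ locally, because $\zeta_x<3/2$ and $\zeta_t<3/2$ on a bounded domain). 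Feeding these into the iteration and taking the supremum over all admissible centers $(y_0,s_0)\in Q^+$ and radii $0<r\le1$ gives~\eqref{scaledenergypressureallscalesuncentre} with $M(M_0,A_0)$ an explicit (polynomial) function of $M_0$ and $A_0$. One should be slightly careful at centers near the curved part of $\partial B_4^+$ versus the flat boundary $\partial\R^3_+$: for centers in $Q_3^+$ the half-balls $Q_r^+(y_0,s_0)$ with $r\le1$ stay inside $Q_4^+$, and near the flat boundary the boundary Calder\'on--Zygmund and Stokes estimates from~\cite{MMPEnergy} apply, while away from it the interior CKN-type estimates apply; the two are patched by the same interior/boundary covering device used in the proof of Proposition~\ref{pro:epsilonreg}.
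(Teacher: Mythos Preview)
Your approach is viable in spirit but takes a substantially different and more laborious route than the paper. The paper does \emph{not} carry out a direct excess-decay iteration for $D_{\zeta_x,\zeta_t}$. Instead it proceeds in two short moves. First, it invokes as black boxes the results of Mikhaylov~\cite{Mik09} (boundary) and Seregin~\cite{Ser06} (interior), which already establish that a uniform-in-scales bound on $A$ yields uniform-in-scales bounds on $E$ and the \emph{standard} scaled pressure $D_{3/2}$ (for balls centered at a fixed point); this is the content of Propositions~\ref{centeredscaleinvarestboundary} and~\ref{centeredscaleinvarestinterior}. Second, it upgrades $D_{3/2}$ to $D_{\zeta_x,\zeta_t}$ by a single application of local maximal regularity for the linear Stokes system (Proposition~\ref{localboundaryregstokes}) with right-hand side $f=-u\cdot\nabla u$: this gives $\|\nabla p\|_{L^{4/3}_tL^{6/5}_x}$ at all scales in terms of $A,E,D_{3/2}$, and then H\"older plus Poincar\'e yield $D_{\zeta_x,\zeta_t}$. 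General centers are handled by the same three-case covering you mention (boundary balls, balls touching the boundary, interior balls reduced to scale $d(x_0)$ and then enlarged), but there is no pressure iteration at all.

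What your approach buys is self-containment; what the paper's buys is brevity, since the genuinely delicate iteration is outsourced to~\cite{Mik09,Ser06}. A word of caution on your Step~2: the object $p_{\rm harm}$ does not by itself satisfy a Stokes system --- only the pair $(u-u_{\rm loc},\,p-p_{\rm loc})$ does, where $(u_{\rm loc},p_{\rm loc})$ is the \emph{global} Stokes solution with the localized forcing. The decay estimate you claim for $p_{\rm harm}$ then comes precisely from local maximal regularity for this pair, which is the very tool the paper applies directly (and once) rather than inside an iteration. So your scheme would work, but once you make Step~2 rigorous you have essentially reproved Mikhaylov--Seregin for the exponents $(\zeta_x,\zeta_t)$ and rediscovered the paper's key lemma along the way.
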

The first part of this section focuses on proving Proposition~\ref{scaleinvarestnotcentre}.  In doing so, we will need the following proposition, which is a rescaled version of results taken from \cite{Ser09}.

\begin{proposition}[Maximal pressure regularity]\label{localboundaryregstokes}
Let $m,n$ and $s$ be such that $1<m<\infty$, $1<n\leq \infty$ and $m\leq s<\infty$. Suppose $\nabla u\in L^{n}_{t}L^{m}_{x}(Q^+_r)$, $p\in L^{n}_{t}L^{m}_{x}(Q^+_r)$ and $f\in L^{n}_{t}L^{s}_{x}(Q^+_r)$.
In addition, suppose that
\begin{equation}
\partial_{t}u-\Delta u+\nabla p=f,\qquad \div u=0\qquad \mbox{in}\quad Q^{+}_r \, ,    
\end{equation}
and suppose $u$ satisfies the boundary condition 
\begin{equation}
    u=0 \quad \text{ on } \quad x_{3}=0.
\end{equation}
Then, we conclude that $\nabla p\in L^{n}_{t}L^{s}_{x}(Q^{+}_{r/2})$. Furthermore, the estimate
\begin{equation}
    \begin{aligned}
    &\|\nabla p\|_{L^{n}_{t}L^{s}_{x}(Q^{+}_{r/2})}
\leq c(s,n,m) \big(\|f\|_{L^{n}_{t}L^{s}_{x}(Q^+_r)} 
+r^{\frac{3}{s}-\frac{3}{m}-2}\| u\|_{L^{n}_{t}L^{m}_{x}(Q^+_r)}\\
 &\quad + r^{\frac{3}{s}-\frac{3}{m}-1}(\|\nabla  u\|_{L^{n}_{t}L^{m}_{x}(Q^+_r)}+\|p-[p]_{B^{+}_r}\|_{L^{n}_{t}L^{m}_{x}(Q^+_r)}) \big)
    \end{aligned}
\end{equation}
holds.
\end{proposition}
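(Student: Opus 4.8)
The plan is to deduce Proposition~\ref{localboundaryregstokes} from the local maximal regularity theory for the non-stationary Stokes system near a flat boundary, as developed in~\cite[Theorem 1.2]{Ser09}, combined with the parabolic scaling symmetry; there is no new analytic content beyond a careful change of variables.

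First I would record the statement at the unit scale. Applying the cited result on $Q^+_1$ with the homogeneous Dirichlet condition on $\{x_3 = 0\}$ yields $\nb p \in L^n_t L^s_x(Q^+_{1/2})$ together with
\begin{equation}
\norm{\nb p}_{L^n_t L^s_x(Q^+_{1/2})} \leq c(s,n,m)\Big( \norm{f}_{L^n_t L^s_x(Q^+_1)} + \norm{u}_{L^n_t L^m_x(Q^+_1)} + \norm{\nb u}_{L^n_t L^m_x(Q^+_1)} + \norm{p - [p]_{B^+_1}}_{L^n_t L^m_x(Q^+_1)} \Big).
\end{equation}
The mean-subtracted pressure appears on the right because $\nb p$ is invariant under the gauge $p \mapsto p + c(t)$, so the bound must be written through gauge-invariant quantities only; the terms $\norm{\nb u}_{L^n_t L^m_x}$ and $\norm{u}_{L^n_t L^m_x}$ (the latter controlled via Poincar\'e/Ehrling) absorb the discrepancy between the a priori integrability $m$ and the target integrability $s \geq m$.

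Next I would reduce the general radius $r$ to the unit scale by setting $u_r(x,t) = r\, u(rx, r^2 t)$, $p_r(x,t) = r^2 p(rx, r^2 t)$, and $f_r(x,t) = r^3 f(rx, r^2 t)$. This triple solves the same Stokes system on $Q^+_1$, satisfies the same no-slip condition on $\{x_3=0\}$, and inherits the hypotheses of the unit-scale statement. Applying the displayed estimate to $(u_r,p_r,f_r)$ and then undoing the change of variables, using the scaling law $\norm{g(r\cdot,r^2\cdot)}_{L^n_t L^q_x(Q^+_\rho)} = r^{-\frac{3}{q}-\frac{2}{n}}\norm{g}_{L^n_t L^q_x(Q^+_{r\rho})}$ together with the identity $[p_r(\cdot,t)]_{B^+_1} = r^2 [p(\cdot,r^2t)]_{B^+_r}$, produces an explicit power of $r$ on each term. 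Dividing through by the common factor $r^{3-\frac{3}{s}-\frac{2}{n}}$ arising from $\norm{\nb p_r}_{L^n_t L^s_x(Q^+_{1/2})}$ leaves exactly the exponent $0$ on the $f$-term, $\frac{3}{s}-\frac{3}{m}-2$ on the $u$-term, and $\frac{3}{s}-\frac{3}{m}-1$ on the $\nb u$- and pressure-terms, matching the claimed estimate.

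The only point requiring genuine care is the bookkeeping of the scaling exponents and the verification that the local pressure average $[p_r(\cdot,t)]_{B^+_1}$ transforms to $r^2[p(\cdot,r^2t)]_{B^+_r}$; everything else is routine once the unit-scale statement is available. The substantive input — that maximal regularity for the non-stationary Stokes system holds up to the flat boundary under the homogeneous Dirichlet condition, with the a priori pressure term present on the right-hand side — is precisely what~\cite{Ser09} supplies, so this is where I would rely on the literature rather than reprove anything.
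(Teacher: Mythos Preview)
Your proposal is correct and matches the paper's approach: the paper does not give a proof but simply states that this proposition ``is a rescaled version of results taken from~\cite{Ser09},'' and your argument carries out exactly that rescaling from the unit-scale statement of~\cite[Theorem~1.2]{Ser09}. The scaling bookkeeping you record is accurate.
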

Another key step in proving Proposition \ref{scaleinvarestnotcentre} is establishing the following simplified versions for balls centered at the space-time point $(0,0)$, which we now state as two separate propositions.
 
\begin{proposition}\label{centeredscaleinvarestboundary}
Let $(u,p)$ be a suitable weak solution of~\eqref{e.nse} on $Q^{+}$, in the sense of Definition~\ref{def.sws},  satisfying 
\begin{equation}\label{boundedenergyonescalecentered}
 (u,p)\in L^{\infty}_{t}L^{2}_{x}\cap L^{2}_{t}\dot{H}_x^{1}(Q^{+})\times L^{\frac{3}{2}}(Q^{+}).
\end{equation}
 Suppose that $u$ satisfies 
 \begin{equation}\label{centeredscaleinvariantKE}
 \sup_{0<r\leq 1}A(u,r;B^+)\leq A_{0}.
 \end{equation}
 Then the above assumptions imply that
 \begin{align}\label{scaledenergypressureallscalescentre}
 \begin{split}
 \sup_{0<r\leq 1}& \left\lbrace E(u,r;B^+)+D_{\frac{3}{2}}(p,r; B^+)+r^{-\frac{3}{2}+\delta_{0}}\|\nabla p\|_{L^{\zeta_{t}}_{t}L^{\frac{3\zeta_{x}}{3+\zeta_{x}}}_{x}(Q^+_r)} \right\rbrace \\
 &\leq F(A_{0}, D_{\frac{3}{2}}(p,1;B^+),E(u,1;B^+) ),
 \end{split}
 \end{align}
 for a function $F$ increasing in its (three) arguments.
\end{proposition}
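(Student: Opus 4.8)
## Proof proposal for Proposition~\ref{centeredscaleinvarestboundary}

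The plan is to run a \emph{decay-of-excess} iteration on a family of scale-invariant quantities centered at the origin, in the spirit of the Caffarelli--Kohn--Nirenberg-type arguments and the analogous whole-space/half-space estimates used in \cite{barker2019scale}. First I would isolate the hierarchy of quantities in play: the scaled energy $E(u,r;B^+)$, the scaled $L^{3/2}$ pressure $D_{3/2}(p,r;B^+)$, and the scaled pressure-gradient term $r^{-3/2+\delta_0}\|\nabla p\|_{L^{\zeta_t}_t L^{3\zeta_x/(3+\zeta_x)}_x(Q_r^+)}$. The hypothesis gives uniform control of $A(u,r;B^+)$ over all scales $r\in(0,1]$, which is the crucial \emph{scale-invariant kinetic energy bound}; the whole game is to upgrade this to a scale-invariant bound on the energy dissipation and the pressure quantities, with constants depending only on $A_0$, $D_{3/2}(p,1;B^+)$ and $E(u,1;B^+)$.

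The key steps, in order, would be: (1) \textbf{Local energy inequality at scale $r$.} Insert a cutoff adapted to $Q_r^+$ into the local energy inequality for the genuine (unperturbed) Navier--Stokes solution, i.e.\ Definition~\ref{def.sws} with $a\equiv 0$. This bounds $E(u,r;B^+)$ by $A(u,2r;B^+)$ plus cubic velocity terms $\iint |u|^3 |\nabla\Phi|$ and the pressure-velocity term $\iint |p-[p]| \,|u|\,|\nabla\Phi|$. The cubic terms are handled, as in Step~2 of the proof of Lemma~\ref{lem:interiormorrey}, by interpolating $L^3_{t,x}$ (or the relevant mixed norm) between the energy space $L^\infty_t L^2_x\cap L^2_t H^1_x$ and $A$-controlled quantities, absorbing the dissipation with a small constant; the boundary no-slip condition lets the flat-boundary terms disappear. (2) \textbf{Pressure decomposition.} Split $p = p_{\rm Helm} + p_{\rm harm}$ à la \cite{MMPEnergy,barker2019scale}, or more directly apply Proposition~\ref{localboundaryregstokes} with $f = -u\cdot\nabla u = -\div(u\otimes u)$ to control $\|\nabla p\|_{L^{\zeta_t}_t L^{3\zeta_x/(3+\zeta_x)}_x(Q_{r/2}^+)}$ in terms of $\|u\otimes u\|$ in the right mixed norm at scale $r$ plus lower-order terms $r^{\#-2}\|u\|$, $r^{\#-1}(\|\nabla u\| + \|p-[p]_{B_r^+}\|)$; by Sobolev--Poincaré this also upgrades $D_{3/2}$ (or $D_{\zeta_x,\zeta_t}$) at scale $r/2$. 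The norm $\|u\otimes u\|$ is then dominated by the $A$- and $E$-controlled quantities via interpolation. (3) \textbf{Iteration/bootstrap.} Combine (1) and (2) into a system of inequalities relating the quantities at scale $\theta r$ to those at scale $r$; since all the "bad" self-interaction contributions carry a positive power of $A_0^{1/2}$ or of $\theta$, one closes the estimate by choosing $\theta$ small and iterating down from $r=1$, where the initial data is exactly $A_0$, $E(u,1;B^+)$, $D_{3/2}(p,1;B^+)$. This yields the stated bound with $F$ increasing in its three arguments.

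The main obstacle I anticipate is \textbf{the low time-integrability of the half-space pressure} — exactly the difficulty emphasized in the introduction: for Stokes in $\R^3_+$ the pressure is only in $L^{4/3-}_t L^2_x$, so naive scaling fails and one cannot simply imitate the whole-space argument where $p\simeq u\otimes u$. The fix is to work with the \emph{fractional-pressure} / $L^{\zeta_t}_t L^{\zeta_x}_x$ quantities with $\#\zeta = -7/2+\delta_0$ rather than the scaling-critical $-7/2$, and to feed Proposition~\ref{localboundaryregstokes} the source term $\div(u\otimes u)$ whose regularity is borrowed from $A$ and $E$; the small loss $\delta_0$ is precisely what keeps the time exponent $\zeta_t = 1+\delta_t$ below the obstruction threshold while still being controlled, with room to spare, by the energy norm. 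A secondary technical point is ensuring the cutoff arguments near $\p_{\rm flat}Q_r^+$ are legitimate — this is where the assumed no-slip condition $u|_{x_3=0}=0$ and the suitable-weak-solution structure (weak continuity in time, local energy inequality) are used — but this is routine given Definition~\ref{def.sws}.
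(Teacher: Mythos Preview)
Your outline is correct and would succeed, but it is considerably more labor-intensive than what the paper actually does, and your emphasis on the $\zeta$-exponent difficulty is somewhat misplaced for this particular proposition.

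The paper's proof is very short: it \emph{cites} Lemma~3.2 of Mikhaylov~\cite{Mik09} as a black box to obtain directly
\[
\sup_{0<r\leq 1}\big\{E(u,r;B^+)+D_{3/2}(p,r;B^+)\big\}\leq F^{(1)}\big(A_0,D_{3/2}(p,1;B^+),E(u,1;B^+)\big),
\]
so the entire decay-of-excess iteration you sketch in your steps~(1) and~(3) is outsourced. What remains is only the $\nabla p$ term, and here the paper applies Proposition~\ref{localboundaryregstokes} with the \emph{fixed, simpler} exponents $n=4/3$, $s=m=6/5$ (using $\|u\cdot\nabla u\|_{L^{4/3}_t L^{6/5}_x(Q_r^+)}\les \|u\|_{L^\infty_t L^2_x}^{1/2}\|\nabla u\|_{L^2}^{3/2}$ via H\"older and Poincar\'e with the no-slip condition), and only at the very end uses H\"older to pass from $r^{-1}\|\nabla p\|_{L^{4/3}_t L^{6/5}_x(Q^+_{r/2})}$ down to $r^{-3/2+\delta_0}\|\nabla p\|_{L^{\zeta_t}_t L^{3\zeta_x/(3+\zeta_x)}_x(Q^+_{r/2})}$.

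By contrast, you propose to redo the Mikhaylov iteration from scratch and to carry the $\zeta$-exponents through the whole argument. That works, and is more self-contained, but the low-time-integrability obstruction you highlight is not actually in play here: the iteration that produces the $E$ and $D_{3/2}$ bounds runs entirely with the standard $L^{3/2}_{t,x}$ pressure quantity, exactly as in the whole-space case; the $\zeta$-pressure-gradient estimate is a cheap afterthought obtained by H\"older from a stronger estimate. The genuine $\zeta$-difficulty lives in Section~\ref{sec:epsilonreg}, not in this proposition. Your Helmholtz/harmonic split is also unnecessary here; a single call to local maximal regularity suffices.
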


\begin{proof}[Proof of Proposition \ref{centeredscaleinvarestboundary}]First, by Lemma 3.2 of~\cite{Mik09},
we see that the assumptions \eqref{boundedenergyonescalecentered}-\eqref{centeredscaleinvariantKE} imply that
\begin{equation}
\label{mikhaylov}
    \begin{aligned}
 &\sup_{0<r\leq 1} \left\lbrace E(u,r;B^+)+D_{\frac{3}{2}}(p,r; B^+) \right\rbrace
 \leq F^{(1)}(A_{0}, D_{\frac{3}{2}}(p,1;B^+),E(u,1;B^+) ).
    \end{aligned}
\end{equation}
 Using that $u(\cdot,t)$ vanishes on $\p B^+\cap\p\mathbb{R}^3_{+}$, we can apply H\"{o}lder's inequality and Poincar\'{e}'s inequality to infer that
 \begin{equation}\label{nonlinearityest}
 \|u\cdot\nabla u\|_{L^{\frac{4}{3}}_{t}L^{\frac{6}{5}}_{x}(Q^+_r)}\leq C\|u\|_{L^{\infty}_{t}L^{2}_{x}(Q^+_r)}^{\frac{1}{2}}\|\nabla u\|_{L^{2}(Q^+_r)}^{\frac{3}{2}}.
 \end{equation}
 We use this to  now apply Proposition \ref{localboundaryregstokes} with $n=\frac{4}{3}$, $s=m=\frac{6}{5}$. This and H\"{o}lder's inequality allow us to infer that for $0<r\leq 1$,
 \begin{equation}
 \label{locmaxregapplication}
     \begin{aligned}
       &r^{-1}\|\nabla p\|_{L^{\frac{4}{3}}_{t}L^{\frac{6}{5}}_{x}(Q^{+}_{r/2})}\leq  C (r^{-\frac{1}{2}}\|u\|_{L^{\infty}_{t}L^{2}_{x}(Q^+_r)})^{\frac{1}{2}}(r^{-\frac{1}{2}}\|\nabla u\|_{L^{2}(Q^+_r)})^{\frac{3}{2}}\\
 &\qquad + Cr^{-\frac{1}{2}} ( \|\nabla u\|_{L^{2}(Q^+_r)}+\|u\|_{L^{\infty}_{t}L^{2}_{x}(Q^+_r)} )\\
 &\qquad + Cr^{-\frac{4}{3}}\|p-(p)_{B^{+}(r)}\|_{L^{\frac{3}{2}}_{x,t}(Q^+_r)}.
     \end{aligned}
 \end{equation}
 Since $\zeta_{x}<2$ and $\zeta_{t}< 4/3$, H\"{o}lder's inequality gives
 \begin{equation}\label{holdergradp}
 r^{-\frac{3}{2}+\delta_{0}}\|\nabla p\|_{L^{\zeta_{t}}L^{\frac{3\zeta_{x}}{3+\zeta_{x}}}(Q^{+}_{r/2})}\leq Cr^{-1}\|\nabla p\|_{L^{\frac{4}{3}}_{t}L^{\frac{6}{5}}_{x}(Q^{+}_{r/2})}.
 \end{equation}
 Here, $C$ can be taken to be independent of $\zeta_{x}$ and $\zeta_{t}$.
 
 Now combining \eqref{centeredscaleinvariantKE} with \eqref{mikhaylov}, \eqref{locmaxregapplication} and \eqref{holdergradp} readily gives the desired conclusion \eqref{scaledenergypressureallscalescentre}.
\end{proof}
To prove Proposition \ref{scaleinvarestnotcentre}, we also require the following interior analogue of Proposition \ref{centeredscaleinvarestboundary}.

\begin{proposition}\label{centeredscaleinvarestinterior}
Let $(u,p)$ be a suitable weak solution of~\eqref{e.nse} on $Q$, in the sense of Definition~\ref{def.sws},  satisfying 
\begin{equation}\label{boundedenergyonescalecenteredinterior}
 (u,p)\in L^{\infty}_{t}L^{2}_{x}\cap L^{2}_{t}\dot{H}^1_x(Q)\times L^{\frac{3}{2}}(Q).
\end{equation}
Suppose that $u$ satisfies 
 \begin{equation}\label{centeredscaleinvariantKEinterior}
 \sup_{0<r\leq 1}A(u,r;B)\leq A_{0}.
 \end{equation}
 Then the above assumptions imply that
 \begin{equation}
 \label{scaledenergypressureallscalescentreinterior}
      \begin{aligned}
 \sup_{0<r\leq 1}& \left\lbrace E(u,r;B)+D_{\frac{3}{2}}(p,r; B)+r^{-\frac{3}{2}+\delta_{0}}\|\nabla p\|_{L^{\zeta_{t}}_{t}L^{\frac{3\zeta_{x}}{3+\zeta_{x}}}_{x}(Q_r)} \right\rbrace\\
 &\leq F(A_{0}, D_{\frac{3}{2}}(p,1;B),E(u,1;B) ).
 \end{aligned}
 \end{equation}
\end{proposition}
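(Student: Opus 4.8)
The plan is to mimic the proof of Proposition~\ref{centeredscaleinvarestboundary} almost verbatim, substituting interior tools for the boundary ones. First I would invoke the standard interior analogue of \cite[Lemma 3.2]{Mik09} (a Caffarelli--Kohn--Nirenberg-type scale-invariant estimate for suitable weak solutions) to deduce from~\eqref{boundedenergyonescalecenteredinterior}--\eqref{centeredscaleinvariantKEinterior} that
\begin{equation}
\sup_{0<r\leq 1}\left\lbrace E(u,r;B)+D_{\frac32}(p,r;B)\right\rbrace \leq F^{(1)}\big(A_0,D_{\frac32}(p,1;B),E(u,1;B)\big),
\end{equation}
the exact analogue of~\eqref{mikhaylov}.

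The only place where the interior argument genuinely departs from the boundary one is the bound on the nonlinearity $u\cdot\nabla u$ in $L^{4/3}_tL^{6/5}_x(Q_r)$. At the boundary, Poincar\'e's inequality (available because $u$ vanishes on the flat part of $\partial Q^+$) turns $\|u\|_{L^6_x}$ into $\|\nabla u\|_{L^2_x}$; in the interior there is no such device, so instead I would use $u\in L^\infty_tL^2_x\cap L^2_t\dot H^1_x$ together with the scaled Sobolev inequality $\|u\|_{L^6(B_r)}\les\|\nabla u\|_{L^2(B_r)}+r^{-1}\|u\|_{L^2(B_r)}$, followed by interpolation of $\|u\|_{L^4_tL^3_x(Q_r)}$ between $L^\infty_tL^2_x$ and $L^2_tL^6_x$, to arrive at
\begin{equation}
\|u\cdot\nabla u\|_{L^{\frac43}_tL^{\frac65}_x(Q_r)}\les r\, A(u,r;B)^{\frac14}\big(E(u,r;B)+A(u,r;B)\big)^{\frac34}.
\end{equation}
By the previous display and~\eqref{centeredscaleinvariantKEinterior} the right-hand side is controlled scale-invariantly; the extra factor of $A$ is harmless since $A\leq A_0$.

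With the nonlinearity in hand, I would apply the interior version of Proposition~\ref{localboundaryregstokes} (again a rescaled consequence of \cite{Ser09}, with $Q^+_r$ replaced by $Q_r$ and no boundary condition imposed) with $f=-u\cdot\nabla u$ and $n=\frac43$, $s=m=\frac65$, to control $r^{-1}\|\nabla p\|_{L^{4/3}_tL^{6/5}_x(Q_{r/2})}$ by $\|u\cdot\nabla u\|_{L^{4/3}_tL^{6/5}_x(Q_r)}$ together with the scale-invariant norms of $u$, $\nabla u$, and $p-[p]_{B_r}$, all bounded by the first step. Since $\zeta_x<2$ and $\zeta_t<\frac43$, H\"older's inequality gives, as in~\eqref{holdergradp},
\begin{equation}
r^{-\frac32+\delta_0}\|\nabla p\|_{L^{\zeta_t}_tL^{\frac{3\zeta_x}{3+\zeta_x}}_x(Q_{r/2})}\les r^{-1}\|\nabla p\|_{L^{\frac43}_tL^{\frac65}_x(Q_{r/2})},
\end{equation}
with implicit constant independent of $\zeta_x,\zeta_t$; combining everything and absorbing the harmless $r/2\mapsto r$ rescaling into the constants yields~\eqref{scaledenergypressureallscalescentreinterior}.

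The main (and essentially only) obstacle is to make sure the scaled Sobolev inequality used in the nonlinearity estimate produces exactly the powers of $r$ needed for the final bound to be genuinely scale-invariant, and that the resulting dependence on $A$ is absorbed by the hypothesis~\eqref{centeredscaleinvariantKEinterior}. All remaining steps are routine transcriptions of the boundary argument and of the pressure interpolation already carried out in the proof of Proposition~\ref{centeredscaleinvarestboundary}.
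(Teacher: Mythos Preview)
Your proposal is correct and follows exactly the route the paper indicates: the paper omits the proof, saying it is ``nearly identical'' to that of Proposition~\ref{centeredscaleinvarestboundary} with the interior analogue of Mikhaylov's result, namely Seregin~\cite{Ser06}, replacing~\cite{Mik09}. You have also correctly identified the one point where the transcription is not entirely mechanical---the nonlinearity bound~\eqref{nonlinearityest} relies on the no-slip condition via Poincar\'e, and your substitution of the scaled Sobolev inequality $\|u\|_{L^6(B_r)}\les\|\nabla u\|_{L^2(B_r)}+r^{-1}\|u\|_{L^2(B_r)}$ handles this cleanly, producing a harmless extra $A$-dependence absorbed by~\eqref{centeredscaleinvariantKEinterior}.
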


The proof of Proposition~\ref{centeredscaleinvarestinterior} is nearly identical to that of Proposition~\ref{centeredscaleinvarestboundary} and hence is omitted. We remark that to prove Proposition~\ref{centeredscaleinvarestinterior}, one uses the interior analogue of Mikhaylov's result~\cite{Mik09} proven by Seregin~\cite{Ser06}.

\begin{proof}[Proof of Proposition \ref{scaleinvarestnotcentre}] First, we note that by Poincar\'{e}'s inequality it suffices to show that
\begin{equation}
\label{scaledenergygradpressureallscalesuncentre}
    \begin{aligned}
 &\sup_{(y_0,s_{0})\in Q^+}\sup_{0<r\leq 1} \Big\lbrace E(u,r;B^+_4,y_0,s_0) \\
 &\quad + r^{-\frac{3}{2}+\delta_{0}}\|\nabla p\|_{L^{\zeta_{t}}_{t}L^{\frac{3\zeta_{x}}{3+\zeta_{x}}}_{x}(\Omega_r(x_0) \times (s_0-r^2,s_0))} \Big\rbrace \leq \bar{M}(M_{0},A_{0}).
    \end{aligned}
\end{equation}
 Now, we fix $(x_0,t_0)\in \overline{B^{+}} \times (-1,0)$ and $0<r\leq 1$. We divide into the various cases that arise.
 
 \textbf{ 1. Boundary case: $d(x_0)=0$.} In this case, we can directly apply a translated version of Proposition \ref{centeredscaleinvarestboundary}. This gives
 \begin{align}
 \begin{split}
 & E(u,r;B_4^+,x_0,t_0)+D_{\frac{3}{2}}(p,r; B_4^+, x_0, t_0)
 \\&\quad +r^{-\frac{3}{2}+\delta_{0}}\|\nabla p\|_{L^{\zeta_{t}}_{t}L^{\frac{3\zeta_{x}}{3+\zeta_{x}}}_{x}(\Omega_r(x_0) \times (t_0-r^2,t_0))}\\
 &\quad\quad \leq
  F(A_{0},D_{\frac{3}{2}}(p,1; B_4^+, x_0,t_0), E(u,1; B_4^+, x_0,t_0)). 
 \end{split}
 \end{align}
 This estimate is of the form \eqref{scaledenergygradpressureallscalesuncentre}.
 
 \textbf{2. Balls intersecting with the boundary: $r\geq d(x_0).$} 
 In this case, we have $\Omega_r(x_0) \subset B^{+}_{r+d(x_0)}(x_0- d(x_0)e_3)$ and we can then argue as in `1. Boundary case.'
 
 \textbf{3. Balls not intersecting with the boundary: $r< d(x_0)$.} 
 This is the most involved of the three cases and involves two steps. First, we apply a translated and rescaled version of Proposition~\ref{centeredscaleinvarestinterior} to obtain
 \begin{align}
 \begin{split}
 & E(u,r;B_4^+,x_0,t_0)+D_{\frac{3}{2}}(p,r; B_4^+, x_0,t_0)\\
 &\quad +r^{-\frac{3}{2}+\delta_{0}}\|\nabla p\|_{L^{\zeta_{t}}_{t}L^{\frac{3\zeta_{x}}{3+\zeta_{x}}}_{x}(\Omega_r(x_0) \times (t_0-r^2,t_0))}\\
 &\quad\quad \leq 
  F(A_{0},D_{\frac{3}{2}}(p,d(x_0); B_4^+, x_0,t_0), E(u,d(x_0); B_4^+, x_0,t_0)). 
 \end{split}
 \end{align}
 We then control $D_{\frac{3}{2}}(p,d(x_0); B_4^+, x_0,t_0)$ and $E(u,d(x_0); B_4^+, x_0,t_0))$ by appealing to `2. Balls intersecting with the boundary.' This gives an estimate of the form~\eqref{scaledenergygradpressureallscalesuncentre}. \end{proof}
 
\begin{remark}\label{Mdependence}
Observing the statements of Seregin's result \cite{Ser06} and Mikhaylov's result \cite{Mik09}, it is not difficult to determine the dependence of $M(M_0,A_0)$ in Proposition~\ref{scaleinvarestnotcentre}. In particular, $M(M_0,A_0)$ can be taken to be polynomial in~$M_0$ and~$A_0$.
\end{remark}

\begin{proof}[Proof of Theorem~\ref{thm:localizedconcentration}]
First, recall $M=M(M_0,A_0)$ is as in Proposition \ref{scaleinvarestnotcentre}. Additionally, $S(M) = S(M,N_0,3) \in (0,1]$ and $N_0$ are as in Theorem~\ref{thm:localsmoothinglocal}. 
Furthermore, we define 
\begin{equation}\label{t*def}
\bar{t}(M):= -\frac{1}{9}S(M)
\end{equation}
and from now on we consider $t\in [\bar{t}(M),0)$.
With such choices, it is clear that
\begin{equation}
    	R(t) := 3 \times \sqrt{\frac{-t}{S(M)}}\in (\sqrt{-t},1)\,\,\,\,\,\,\forall t\in [\bar{t}(M),0).
\end{equation}
With these parameters fixed, the proof of Theorem \ref{thm:localsmoothinglocal} is by contraposition.
We assume that for any fixed $t\in[\bar{t}(M),0)$,

\begin{equation}\label{smallnessL3}
\|u(\cdot,t)\|_{L^{3}(\Omega_{R(t)}(x^*))}\leq N_{0} 
\end{equation}
 and show that this implies that $(x^*,0)$ is \textit{not} a singular point of $u$. Here $x^*\in \overline{{B}^{+}}$. 
 
 First, note that \eqref{boundedenergyonescalethm}, \eqref{boundedkineticenergyallscalesthm} and Proposition \ref{scaleinvarestnotcentre} imply that
 \begin{align}\label{energypressurescaleR(t)}
 \begin{split}
 &R(t)^{-\frac{1}{2}}\|u\|_{L^{\infty}_{t}L^{2}_{x}\cap L^{2}_{t}\dot{H}^1_x(\Omega_{R(t)} (x^*)\times (-R(t)^2,0))}\\
 &\quad +R(t)^{-\frac{3}{2}+\delta_{0}}\|p-[p]_{\Omega_{R(t)}(x^*)}\|_{L^{\zeta_{t}}_{t}L^{\zeta_{x}}_{x}(\Omega_{R(t)} (x^*)\times (-R(t)^2,0))} \leq M(M_0, A_0).
 \end{split}
 \end{align}
 Now, we define
 \begin{equation}\label{lambdadef}
 \lambda:=\sqrt{\frac{-t}{S(M)}}= \frac{1}{3} R(t)<\frac{1}{3}
 \end{equation}
 and the rescalings
 \begin{equation}\label{rescalings}
 (u_{\lambda}(x,s), p_{\lambda}(x,s)):=(\lambda u(\lambda x, \lambda^2 s+t), \lambda^2 p(\lambda x, \lambda^2 s+t))\quad x^*_{\lambda}:= \lambda x^*.
 \end{equation}
 It is clear that the point $(x,s)=(x^*_{\lambda}, S(M))$ for $(u_{\lambda}, p_{\lambda})$ corresponds to $(x^*,0)$ for the unscaled $(u,p)$. From \eqref{smallnessL3}-\eqref{energypressurescaleR(t)} we have
 \begin{equation}\label{smallnessL3rescaled}
 \|u_{\lambda}(\cdot,0)\|_{L^{3}(\Omega_{3}(x^*_{\lambda}))}\leq N_{0}
 \end{equation}
 and 
 \begin{equation}\label{rescaledenergypressure}
 \begin{split}
 &\|u_{\lambda}\|_{L^{\infty}_{t}L^{2}_{x}\cap L^{2}_{t}\dot{H}^1_x(\Omega_{3} (x^*_{\lambda})\times (0, S(M)))}+\|p_{\lambda}-[p_{\lambda}]_{\Omega_{3}(x^*_{\lambda})}\|_{L^{\zeta_{t}}_{t}L^{\zeta_{x}}_{x}(\Omega_{3} (x^*_{\lambda})\times (0, S(M)))}\leq M.
 \end{split}
 \end{equation}
 We can then apply Theorem \ref{thm:localsmoothinglocal} to infer that $(x^*_{\lambda},S(M))$ is a regular point of $u_{\lambda}$. Undoing the rescaling, we see that this implies that $(x^*,0)$ is a regular point for~$u$. \end{proof}

\subsubsection*{Acknowledgments}
DA was supported by NSF Postdoctoral Fellowship  Grant No. 2002023 and Simons Foundation Grant No. 816048. DA is also grateful to ENS Paris for partially supporting his academic visit to Paris during which this research was initiated.  CP is partially supported by the Agence Nationale de la Recherche, project BORDS, grant ANR-16-CE40-0027-01, project SINGFLOWS, grant ANR-18-CE40-0027-01, project CRISIS, grant ANR-20-CE40-0020-01 and by the CY Initiative of Excellence, project CYNA.

\begin{appendix}
\numberwithin{theorem}{section}

\section{$L^m$ solution theory}

Here we collect statements about the well-known perturbation theory for the Navier-Stokes equations in the half-space, with contributions due to~\cite{McCracken,Weissler,kato,Giga1985,Giga1986} and many others.

\begin{proposition}[$L^m$ solution theory]
	\label{pro:Lmsoltheory}
	Let $u_0 \in L^m_\sigma(\R^3_+)$ with $m \in [3,+\infty)$ and $\norm{u_0}_{L^m_\sigma(\R^3_+)} \leq N$.
	\begin{itemize}[leftmargin=*]
		\item (Subcritical) If $m > 3$, then there exists $T_m = T_m(N) > 0$ and a mild solution $u \in C([0,T_m];L^m(\R^3_+))$ satisfying, for all $p \in [m,+\infty]$,
		\begin{equation}
			\label{eq:linearestimatem}
			\sup_{t \in (0,T_m)} t^{\frac{3}{2}\left(\frac{1}{m}-\frac{1}{p} \right)} \norm{u(\cdot,t)}_{L^p(\R^3_+)} \les_{m} N
		\end{equation}
		\begin{equation}
			\label{eq:linearderivestm}
			\sup_{t \in (0,T_m)} t^{\frac{3}{2}\left(\frac{1}{m}-\frac{1}{p} \right)+\frac{1}{2}} \norm{\nabla u(\cdot,t)}_{L^p(\R^3_+)} \les_{m} N
		\end{equation}
		\begin{equation}
			\label{eq:linearestimatelebesguem}
			\norm{u}_{L^{5m/3}_{t,x}(\R^3_+ \times (0,T_m))} \les_{m} N.
		\end{equation}
		The mild solution is unique in the class $C([0,T];L^m_\sigma(\R^3_+))$.
		\item (Critical) If $m=3$, then there exists $T_3 = T_3(u_0) > 0$ and a mild solution $u \in C([0,T_3];L^3(\R^3_+))$ satisfying~\eqref{eq:linearestimatem}, \eqref{eq:linearderivestm}, and~\eqref{eq:linearestimatelebesguem}.
		If $N \ll 1$, then $T_3 = +\infty$.
		The mild solution is unique in the class $u \in L^5_{t,x}(\R^3_+ \times (0,T_3))$.
		\item If $u_0 \in L^{m_1}_\sigma(\R^3_+) \cap L^{m_2}_\sigma(\R^3_+)$ with $m_1, m_2 \in [3,+\infty)$, then the mild solutions guaranteed by the above points are identical.
		\item (Weak-strong uniqueness) If also $u_0 \in L^2_\sigma(\R^3_+)$, then all weak Leray-Hopf solutions are identical to the above mild solution on its existence time.
	\end{itemize}
\end{proposition}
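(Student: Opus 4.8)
The statement is classical, so the plan is simply to run the standard Kato fixed-point scheme, being slightly careful about what is special to the half space. Write $A=-\bP\Delta$ for the Stokes operator on $L^p_\sigma(\R^3_+)$, where $\bP$ is the Helmholtz--Leray projection (bounded on $L^p_\sigma$, $1<p<\infty$), and let $e^{-tA}$ be the associated analytic semigroup. The tools I would invoke are the $L^p$--$L^q$ smoothing estimates
\begin{equation}
\norm{e^{-tA}f}_{L^q(\R^3_+)}\les_{p,q} t^{-\frac32\left(\frac1p-\frac1q\right)}\norm{f}_{L^p(\R^3_+)},\qquad \norm{\nabla e^{-tA}\bP f}_{L^q(\R^3_+)}\les_{p,q} t^{-\frac32\left(\frac1p-\frac1q\right)-\frac12}\norm{f}_{L^p(\R^3_+)},
\end{equation}
valid for $1<p\le q\le\infty$ (see \cite{Giga1985,Giga1986,kato} and references therein), together with the boundedness of $\bP$. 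These feed into a contraction for the mild formulation
\begin{equation}
u(t)=e^{-tA}u_0+\mathcal{B}(u,u)(t),\qquad \mathcal{B}(u,v)(t):=-\int_0^t e^{-(t-s)A}\bP\,\div(u\otimes v)(s)\,ds.
\end{equation}

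First, I would treat $m>3$. The fixed point is carried out in a Kato space $\mathcal{X}_T$ with norm $\sup_{0<t<T}\norm{u(t)}_{L^m}+\sup_{0<t<T}t^{\frac32\left(\frac1m-\frac1{p_\ast}\right)}\norm{u(t)}_{L^{p_\ast}}$ for a fixed auxiliary exponent $p_\ast>m$ chosen with room to spare. The smoothing estimates give $\norm{e^{-\cdot A}u_0}_{\mathcal{X}_T}\les_m N$ uniformly in $T$ and a bilinear bound $\norm{\mathcal{B}(u,v)}_{\mathcal{X}_T}\les_m T^{\gamma}\norm{u}_{\mathcal{X}_T}\norm{v}_{\mathcal{X}_T}$ with $\gamma=\gamma(m)>0$, so the Banach fixed point theorem produces a unique solution on $(0,T_m)$ with $T_m$ depending only on $N$, as claimed. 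For the critical case $m=3$ one has $\gamma=0$, so this argument no longer yields an existence time depending on the norm alone; instead I would use that for a fixed $u_0\in L^3_\sigma$ one has $\norm{e^{-\cdot A}u_0}_{\mathcal{X}_T}\to0$ as $T\to0^+$ (by density of smooth data together with dominated convergence), which closes the fixed point on some $(0,T_3)$ with $T_3=T_3(u_0)$, while $\norm{u_0}_{L^3}\ll1$ makes that quantity small uniformly in $T$, hence $T_3=+\infty$.

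The remaining assertions are then routine bookkeeping. The weighted bounds \eqref{eq:linearestimatem} for all $p\in[m,\infty]$ follow by bootstrapping inside $\mathcal{X}_T$; \eqref{eq:linearderivestm} follows from the gradient smoothing estimate applied to Duhamel's formula; and \eqref{eq:linearestimatelebesguem} follows either by interpolation or by re-running the same scheme in $L^q_tL^p_x$ spaces along the line $\#(5m/3,5m/3)=-3/m$. Uniqueness in $C([0,T];L^m_\sigma)$ (resp.\ in $L^5_{t,x}$ when $m=3$) is a Gronwall estimate on the difference of two solutions using the same bilinear bounds; consistency for $u_0\in L^{m_1}_\sigma\cap L^{m_2}_\sigma$ is immediate because both mild solutions lie in $L^5_{t,x}$ on the common interval by \eqref{eq:linearestimatelebesguem}, and the $L^5_{t,x}$ uniqueness applies. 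Finally, weak--strong uniqueness follows from the mild solution lying in the subcritical Serrin-type class $L^{5m/3}_{t,x}$ (the endpoint $L^5_{t,x}$ when $m=3$): one tests the Leray--Hopf energy inequality against the mild solution and uses the energy equality for the latter, the no-slip condition killing the boundary terms in the integration by parts.

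The one genuinely half-space-specific point — and the step I expect to require the most care — is the \emph{input}: establishing the $L^p$--$L^q$ estimates, and especially the gradient estimate for $\nabla e^{-tA}\bP$, is considerably more delicate over $\R^3_+$ than over $\R^3$, since one cannot simply convolve against the heat kernel and must instead use the explicit structure of the Stokes resolvent in the half space. Granting those estimates (which are by now standard, see the references above), every step in the plan is verbatim the classical argument.
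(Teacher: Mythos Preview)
Your proposal is correct and follows the same classical Kato--Giga scheme the paper has in mind. The only difference is one of emphasis: the paper's appendix spends its effort on deriving the linear $L^p$--$L^q$ and space-time semigroup estimates in $\R^3_+$ (via analyticity, elliptic regularity, interpolation, duality for $S(t)\bP\div$, and Marcinkiewicz/HLS for~\eqref{eq:linearestimatelebesguem}) and then defers the nonlinear fixed point to a textbook, whereas you take the linear estimates as input from the literature and sketch the fixed-point and uniqueness arguments in more detail---the two write-ups are complementary rather than distinct.
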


The mild formulation of the Navier-Stokes equation is, formally,
\begin{equation}
    \label{eq:mildformulation}
\begin{aligned}
    u(\cdot,t) &= e^{t A} u_0 - \int_0^t e^{(t-s) A} \bP \div (u \otimes u)(\cdot,s) \, ds \\
    &= e^{t A} u_0 - \int_0^t e^{(t-s) A} \bP (u \cdot \nabla u)(\cdot,s) \, ds.
    \end{aligned}
\end{equation}
We summarize below only the linear theory necessary to prove Proposition~\ref{pro:Lmsoltheory}. To do so, one may follow~\cite[Chapter 5]{Tsaibook}.

\begin{proof}[Summary of linear theory]
 For all $m \in (1,+\infty)$, the Stokes operator
\begin{equation}
    A = \bP \Delta : (W^{2,m} \cap W^{1,m}_0 \cap L^m_\sigma)(\R^3_+) \subset L^m_\sigma(\R^3_+) \to L^m_\sigma(\R^3_+)
\end{equation}
 generates an analytic semigroup $t \mapsto S(t)$ in $L^m(\R^3_+)$, $t \geq 0$. In particular, $S(t) u_0 \in D(A^k)$ for all $k \geq 0$ and $t > 0$, and the following estimates are satisfied:
\begin{equation}
    t^k \| A^k S(t) u_0 \|_{L^m_\sigma(\R^3_+)} \les_m \| u_0 \|_{L^m_\sigma(\R^3_+)}.
\end{equation}
By elliptic regularity for the steady Stokes equations, we have that
\begin{equation}
    t^k \| \nabla^{2k}_x S(t) u_0 \|_{L^m(\R^3_+)} \les_m \| u_0 \|_{L^m_\sigma(\R^3_+)}
\end{equation}
 for all $k \geq 0$. Upon interpolating, we have
\begin{equation}
    \label{eq:myappendixestimate}
    t^{\frac{1}{2}} \| \nabla_x S(t) u_0 \|_{L^p(\R^3_+)} + \| S(t) u_0 \|_{L^p_\sigma(\R^3_+)} \les_m t^{\frac{1}{2} (\frac{3}{p} - \frac{3}{m})} \| u_0 \|_{L^m_\sigma(\R^3_+)}
\end{equation}
for all $p \in [m,+\infty]$. In particular,~\eqref{eq:myappendixestimate} estimates $\nabla_x S(t) \bP \: L^m(\R^3_+;\R^3) \to L^m(\R^3_+;\R^{3\times3})$.  By duality, we have
\begin{equation}
    \label{eq:muhdualityest}
    t^{\frac{1}{2}} \| S(t) \bP \div F \|_{L^m(\R^3_+;\R^3)} \les_m \| F \|_{L^m(\R^3_+;\R^{3\times3})}
\end{equation}
for all $m \in (1,+\infty)$. Alternatively, one may argue by means of the operator $A^{1/2}$.
By combining~\eqref{eq:muhdualityest} with~\eqref{eq:myappendixestimate}, we have
\begin{equation}
    \label{eq:muhwonderfuldualityest}
    t^{\frac{1}{2}} \| S(t) \bP \div F \|_{L^p(\R^3_+;\R^3)} \les_m t^{\frac{1}{2} (\frac{3}{p} - \frac{3}{m})} \| F \|_{L^m(\R^3_+;\R^{3\times3})}.
\end{equation}
for all $p \in [m,+\infty]$. This completes the proof of the time-weighted estimates.

We now argue the space-time Lebesgue estimates
\begin{equation}
    \label{eq:spacetimeestapp1}
    \| S(t) u_0 \|_{L^{5m/3}_{t,x}(\R^3_+ \times \R_+)} \les_m \| u_0 \|_{L^m_\sigma(\R^3_+)} \, ,
\end{equation}
for $m \in (1,+\infty)$, and
\begin{equation}
\label{eq:spacetimeestapp2}
\begin{aligned}
    &\left\| \int_0^t S(t-s) \bP \div F(\cdot,s) \, ds \right\|_{L^{5m/3}_{t,x}(\R^3_+ \times (0,T);\R^3)} \\
    &\quad \les_m T^{\frac{1}{2}(1-\frac{3}{m})} \| F \|_{L^{5m/6}_{t,x}(\R^3_+ \times (0,T);\R^{3\times3})} \, ,
    \end{aligned}
\end{equation}
for $m \in [3,+\infty)$. We follow~\cite{Giga1986}.
To see~\eqref{eq:spacetimeestapp1}, consider the sublinear operator
\begin{equation}
    u_0 \mapsto \left( t \mapsto \| S(t) \bP u_0 \|_{L^{5m/3}_x} \right) : L^m(\R^3_+;\R^3) \to L^{5m/3,\infty}(0,+\infty),
\end{equation}
where the mapping property is due to~\eqref{eq:myappendixestimate}. By the Marcinkiewicz interpolation theorem, we may interpolate between different values of $m$ to obtain~\eqref{eq:spacetimeestapp1}.\footnote{To apply the Marcinkiewicz theorem, it is crucial that $5m/3 \geq m$.} Finally,~\eqref{eq:spacetimeestapp2} can be obtained using~\eqref{eq:muhwonderfuldualityest} and the Hardy-Littlewood-Sobolev inequality (alternatively, Young's convolution inequality in Lorentz spaces).
\end{proof}

\end{appendix}

\parskip	0mm

\bibliographystyle{alpha}
\bibliography{localsmoothingbib}

\end{document}